\newtheorem{theorem}{Theorem}
\newtheorem{corollary}{Corollary}
\definecolor{ao}{rgb}{0.0, 0.5, 0.0}
\definecolor{darkpastelgreen}{rgb}{0.01, 0.75, 0.24}
\definecolor{auburn}{rgb}{0.43, 0.21, 0.1}
\definecolor{armygreen}{rgb}{0.29, 0.33, 0.13}
 \newtheorem{thm}{Theorem}[section]
 \newtheorem{cor}[thm]{Corollary}
 \newtheorem{lem}[thm]{Lemma}
 \theoremstyle{definition}
 \newtheorem{defn}[thm]{Definition}
 \theoremstyle{remark}
 \newtheorem{rem}[thm]{Remark}
 \numberwithin{equation}{section}
\newcommand{\rone}{{\mathds R}}
\newcommand{\cone}{\mathds C}
\newcommand{\zone}{{\mathds Z}}
\def\im{{\rm i}}
\newcommand{\R}{{\mathbb R}}
\begin{document}

\title{On homogeneous Besov  spaces for $1D$ Hamiltonians without zero resonance }

\author{Vladimir Georgiev, \thanks{The first author was supported in part by Contract FIRB " Dinamiche Dispersive: Analisi di Fourier e Metodi Variazionali.", 2012, by INDAM, GNAMPA - Gruppo Nazionale per l'Analisi Matematica, la Probabilit\`{a} e le loro Applicazioni and by Institute of Mathematics and Informatics, Bulgarian Academy of Sciences.}
\\ Department of Mathematics, University of Pisa, Largo B. Pontecorvo 5, \\  Pisa,
56127 Italy, \\
 georgiev@dm.unipi.it \\
\and \\
 Anna Rita Giammetta, \thanks{ The second  author was supported in part by Contract FIRB " Dinamiche Dispersive: Analisi di Fourier e Metodi Variazionali.", 2012 and by INDAM, GNAMPA.}\\ Department of Mathematics, University of Pisa, Largo B. Pontecorvo 5, \\ Pisa,
56127 Italy, \\
giammetta@mail.dm.unipi.it}



\maketitle

\begin{abstract}
We consider 1-D Laplace operator with short range potential $V(x),$ such that $$(1+|x|)^\gamma V(x) \in L^1(\rone), \ \  \gamma > 1.$$  We study the equivalence of classical
homogeneous  Besov type spaces  $\dot{B}^s_p(\rone)$, $p \in (1,\infty)$ and the corresponding perturbed homogeneous Besov spaces associated with the perturbed Hamiltonian $\mathcal{H}= -\partial_x^2 + V(x)$ on the real line. It is shown that the assumptions $1/p < \gamma -1$ and zero is not a resonance   guarantee that the perturbed and unperturbed homogeneous Besov norms of order $s \in [0,1/p)$ are equivalent. As a corollary,  the corresponding wave operators leave classical homogeneous Besov spaces of order $s \in [0,1/p)$ invariant.
\end{abstract}

\section{ Introduction.}

The wave operator methods  have been  used  frequently in the study of the  evolution flow generated by  Hamiltonians, typically considered as perturbations of free Hamiltonians.
The wave operators are defined by the relation
  $$ W_\pm = s - \lim_{t \to \pm \infty} e^{it\mathcal{H}} e^{-it\mathcal{H}_0},$$
  where $\mathcal{H}_0$ is the free Hamiltonian (self-adjoint non-negative operator), $\mathcal{H}$ is the perturbed one and $s - \lim$ means strong limit.
  The existence and completeness of the wave operators in  standard Hilbert space (typically Lebesgue space $L^2$) in case of short range perturbations is well known (see \cite{LPh64}, \cite{RSI78}, \cite{HII} and the references therein).

The  functional calculus for the perturbed non-negative operator $\mathcal{H}$ can be introduced with a relation involving $W_\pm$
\begin{equation}\label{eq.III2}
   g(\mathcal{H}) = W_+ g(\mathcal{H}_0)W_+^*= W_- g(\mathcal{H}_0)W_-^*,
\end{equation}
for any function $g \in L^\infty_{loc}(0,\infty).$ Moreover, the wave operators map unperturbed Sobolev spaces in the perturbed ones,
$$ W_\pm :  D(\mathcal{H}_0^{s/2}) \to  D(\mathcal{H}^{s/2}) $$
and we have the equivalence of the Sobolev norms (see \cite{Y95} for more general Sobolev norms)
\begin{equation}\label{eq.II1}
    \|\mathcal{H}^{s/2}f\|_{L^2(\rone)} + \|f\|_{L^2(\rone)} \sim \|\mathcal{H}_0^{s/2}f\|_{L^2(\rone)} + \|f\|_{L^2(\rone)}.
\end{equation}
The study of the dispersive properties of the evolution flow in some cases of short range perturbations shows (see \cite{CGV}) that we have stronger equivalence between homogeneous Sobolev norms
\begin{equation}\label{eq.II2}
    \| \mathcal{H}^{s/2}f \|_{L^2(\rone^n)}  \sim \|\mathcal{H}_0^{s/2}f\|_{L^2(\rone^n)},
\end{equation}
provided $s < n/2.$

Our first goal in this work is to show that the requirement $s<n/2$ is optimal at least for $n=1,2$ and $\mathcal{H}_0= -\Delta=-\partial_{x}^2$, i.e. we shall prove the following result:

\begin{theorem} \label{l.co1} If $n=1,2,$ and $V (x)$ is a positive potential such that
\begin{equation}\label{eq.C1a1}
   \int_{\rone^n} V^{n/2}(x) dx \leq C < \infty,
\end{equation}
 then \eqref{eq.II2} with $s=n/2$ is not true.
\end{theorem}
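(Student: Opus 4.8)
The plan is to exhibit, for each dimension $n=1,2$, a positive potential $V$ satisfying the smallness condition \eqref{eq.C1a1} together with an explicit function $f$ for which the homogeneous norm $\|\mathcal{H}^{n/4}f\|_{L^2}$ is finite but $\|\mathcal{H}_0^{n/4}f\|_{L^2}=\|(-\Delta)^{n/4}f\|_{L^2}=\infty$ (or vice versa), thereby breaking the equivalence \eqref{eq.II2} at the endpoint $s=n/2$. The natural obstruction to look for is the well-known failure of the homogeneous Sobolev embedding at the critical index: $\dot H^{n/2}(\rn)$ does not embed into $L^\infty$, and functions that are bounded and nice near the origin can fail to lie in $\dot H^{n/2}$ solely because of a logarithmic divergence in the low-frequency (equivalently, large-$|x|$) regime. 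The perturbed operator $\mathcal{H}$, having a potential that decays, behaves differently from $-\Delta$ precisely at zero energy: if $V\ge 0$ the operator $\mathcal{H}$ has no zero resonance and no zero eigenvalue, and the generalized eigenfunctions of $\mathcal{H}$ at low energy are distorted relative to $e^{ix\xi}$ by a bounded-but-nontrivial factor (a Jost-function normalization). This mismatch at $\xi\to 0$ is the mechanism I would exploit.

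Concretely, for $n=1$ I would take $V$ compactly supported, smooth, nonnegative, and small in $L^{1/2}$—e.g. $V_\lambda(x)=\lambda\chi(x)$ with $\lambda$ small—and analyze the distorted Fourier transform $\mathcal{F}_{\mathcal H}$ associated with $\mathcal H$, whose kernel is built from the Jost solutions $f_\pm(x,\xi)$. Because $V\ge0$ and has no resonance, the transmission coefficient $T(\xi)$ satisfies $T(0)=0$ (this is exactly the ``generic''/non-resonant case), so $|T(\xi)|\sim c|\xi|$ as $\xi\to 0$. Then $\|\mathcal H^{s/2}f\|_{L^2}^2 = \int |\xi|^{2s}\,|\widehat{\mathcal F_{\mathcal H}f}(\xi)|^2\,d\xi$, and for a test function $f$ whose distorted transform is, say, $\approx |\xi|^{-1/2}(\log 1/|\xi|)^{-1}$ near $\xi=0$, the extra vanishing of the spectral measure $|T(\xi)|^2\,d\xi \sim |\xi|^2 d\xi$ near zero makes $\|\mathcal H^{1/4}f\|_{L^2}$ finite, whereas the ordinary transform $\widehat f(\xi)$ (obtained from $\mathcal F_{\mathcal H}f$ by the bounded-invertible-but-not-near-zero intertwining) inherits a $|\xi|^{-1/2}\log$-type singularity that makes $\|(-\Delta)^{1/4}f\|_{L^2}=\infty$. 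The point is that the change of variables between the two spectral representations has a Jacobian that degenerates like $|\xi|^2$ at the bottom of the spectrum, and at the critical index $s=n/2$ the logarithmic borderline of Sobolev embedding makes the norms genuinely inequivalent. For $n=2$ one argues analogously using the zero-energy asymptotics of the perturbed resolvent: with $V\ge 0$ and no zero resonance, $\mathcal H$ at low energy again differs from $-\Delta$ by a logarithmically-corrected factor, and the same critical-index logarithmic gap applies.

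The main obstacle I anticipate is making the low-energy asymptotics of the distorted Fourier transform (for $n=1$) or of $(\mathcal H-z)^{-1}$ as $z\to 0$ (for $n=2$) sufficiently precise and uniform to conclude the quantitative divergence, and especially to do so under only the scaling-critical smallness assumption \eqref{eq.C1a1} rather than, say, pointwise smallness of $V$ or compact support. In $n=1$ the Jost-function/transmission-coefficient machinery under an $L^1$-type weighted hypothesis is classical, so the $1$-D case should be robust; in $n=2$ the zero-energy expansion of the two-dimensional Schrödinger resolvent is more delicate (the ``Jensen--Kato''-type expansions), and verifying that positivity of $V$ forces the non-resonant branch while keeping the argument within the stated hypotheses is where the real work lies. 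A secondary technical point is choosing $f$ so that it lies in the perturbed homogeneous Besov/Sobolev space in the strong sense used elsewhere in the paper (i.e. not just formally), which I would handle by taking $f$ in the range of a Littlewood--Paley-type projection adapted to $\mathcal H$ and truncating at high frequency.
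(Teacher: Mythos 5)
Your plan diverges substantially from the paper's argument, and as written it has genuine gaps. First, the theorem is a universal statement: for \emph{every} positive $V$ with $V^{n/2}\in L^1(\rone^n)$ the equivalence \eqref{eq.II2} with $s=n/2$ fails. Your strategy only produces one carefully engineered potential (compactly supported, small), which would at best prove the existence of a counterexample, not the theorem as stated. Second, the central mechanism you invoke --- an ``extra vanishing of the spectral measure $|T(\xi)|^2\,d\xi$'' --- is not coherently set up. The distorted Fourier transform attached to $\mathcal H$ is unitary onto $L^2(d\xi)$, so $\|\mathcal H^{s/2}f\|_{L^2}^2=\int|\xi|^{2s}|\tilde f(\xi)|^2\,d\xi$ with no additional weight; if instead you use the non-normalized eigenfunction expansion, the factor involving $T(\xi)$ enters both the norm and the inversion formula, and the claimed dichotomy (perturbed norm finite, free norm infinite for the same $f$) does not follow from the computation you sketch. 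Pinning down the precise low-energy relation between $\tilde f$ and $\hat f$ (essentially the behaviour of $W_\pm^*$ at the bottom of the spectrum) is exactly the hard step, and it is not supplied. Third, you yourself concede that the $n=2$ case (Jensen--Kato-type resolvent expansions) is ``where the real work lies''; that half of the theorem is entirely missing from the plan.

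The paper's proof is much more elementary and bypasses all spectral machinery. Assuming the equivalence, positivity of $V$ gives $\langle(-\Delta+V)u,u\rangle\ge\langle Vu,u\rangle=\|V^{1/2}u\|_{L^2}^2$, and an interpolation step between $a=0$ and $a=1/2$, applied at $a=n/4$, yields $\int_{\R^n} V^{n/2}|u|^2\le C\|D^{n/2}u\|_{L^2}^2$. The rescaling $u_\lambda(x)=u(\lambda x)$, $\lambda\searrow0$, concentrates the left-hand side at a point while the right-hand side is scale-invariant, giving $|u(0)|^2\int V^{n/2}\le C\|D^{n/2}u\|_{L^2}^2$; translation invariance of $\|D^{n/2}u\|_{L^2}$ upgrades this to $\|u\|_{L^\infty}^2\le C_1\|D^{n/2}u\|_{L^2}^2$, i.e.\ the critical Sobolev embedding $\dot{H}^{n/2}(\R^n)\hookrightarrow L^\infty(\R^n)$. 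That embedding is refuted by the standard logarithmic example $\phi_N(x)=\sum_{j=0}^{N}|x|^{-n/2}\mathds{1}_{2^j\le|x|\le2^{j+1}}(x)$, which forces $N^2\lesssim N$. Note that this argument uses the hypothesis on $V$ only through $0<\|V^{n/2}\|_{L^1}<\infty$ and positivity, which is why it covers all admissible potentials at once. If you want to salvage your route, the scaling-plus-translation reduction is the idea you are missing: it converts the assumed equivalence into a statement about $-\Delta$ alone, where the failure at the critical index is classical.
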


  The mapping properties for the case of Sobolev spaces $W^s_p(\rone^n)$ are studied in (\cite{Y95}, \cite{W0})
     and they show examples of spaces invariant under the action of the wave operators.

Our unperturbed Hamiltonian $\mathcal{H}_0$ is the self-adjoint realization of $-\partial_x^2$ on the real line $\rone.$ The perturbed Hamiltonian
is $\mathcal{H}=-\partial_x^2+V(x).$ The results in \cite{W0} deal with short range assumptions that guarantee $W^{k}_p(\rone)$ boundedness of $W_\pm.$
The $L^p(\rone)$ boundedness is studied in \cite{DF06}.

Our key goal in this work is to study how classical homogeneous Besov spaces $\dot{B}^s_p(\rone)$ are transformed under the action  the wave operators.

The splitting property \eqref{eq.III2}
implies
that
$$ W_\pm :  \dot{B}^s_p(\rone) \Longrightarrow \dot{B}^s_{p,\mathcal{H}}(\rone), \ \forall s \geq 0, \ 1 < p < \infty,$$
where $\dot{B}^s_{p,\mathcal{H}}(\rone)$ is the perturbed Besov space generated by the Hamiltonian $\mathcal{H}.$
More precisely, $\dot{B}^s_{p, \mathcal{H}}(\rone)$ is the  homogeneous Besov spaces associated with the perturbed Hamiltonian $ \mathcal{H}= -\partial_x^2+V$ as the closure of $S(\rone)$ functions $f$ with respect to the norm
\begin{equation}\label{eq:BS2} \begin{aligned}
   \|  f\|_{ \dot{B}^s_{p,\mathcal{H}}(\rone)} =
 \left( \sum_{j=-\infty}^\infty 2^{2js} \left\| \varphi \left(
\frac{ \sqrt{\mathcal{H}}}{2^j} \right)f\right\|^2_{L^p(\rone)} \right)^{1/2} .
\end{aligned}\end{equation}

Here and below
 $\varphi(\tau) \in C_0^\infty(\rone \smallsetminus 0)$ is a non-negative  even function, such that
 $$ \sum_{j \in \zone} \varphi \left(
\frac{ s}{2^j} \right) = 1 \ , \ \ \forall s \in \rone \setminus  0 .$$

Some basic properties of these Besov spaces and the independence of the Besov space of the choice of the Paley-Littlewood function $\varphi$ can be found in \cite{Ze10}.

The equivalence of the homogeneous Besov norm
\begin{equation}\label{eq.IMi}
  \sum_{j=-\infty}^\infty 2^{2js} \left\| \varphi \left(
\frac{ \sqrt{\mathcal{H}}}{2^j} \right)f\right\|^2_{L^p(\rone)} \sim  \sum_{j=-\infty}^\infty 2^{2js} \left\| \varphi \left(
\frac{ \sqrt{\mathcal{H}_0}}{2^j} \right)f\right\|^2_{L^p(\rone)} ,
\end{equation}
imply that the homogeneous Besov space $\dot{B}^s_p(\R) $ is also invariant under the action of the wave operators $W_\pm.$
The natural restriction $0 \leq s < 1/p$ can be justified  by Theorem \ref{l.co1}.

Our approach to establish \eqref{eq.IMi} is based on establishing estimates of this kind
\begin{equation}\label{eq.III5}
    \left\| \varphi \left(
\frac{ \sqrt{\mathcal{H}}}{2^j} \right) \varphi \left(
\frac{ \sqrt{\mathcal{H}_0}}{2^k} \right)f\right\|_{L^p(\rone)} \leq  \frac{C}{2^{|j-k|s}} \|f\|_{L^p(\rone)}, \ \ \forall \ s >0, \ s < \frac{1}{p} ,
\end{equation}
where $j,k\in\mathbb{Z}$ will satisfy certain relations.

\section{Assumptions and main results}


We shall assume that the potential  $V:\rone\to \rone$ is a real-valued potential, $V \in L^1(\R)$ and $V$ is decaying sufficiently rapidly at infinity, namely following \cite{W} we require
  \begin{equation}\label{V6}
    \|\langle x \rangle^\gamma V\|_{L^1(\rone)}  < \infty, \   \gamma > 1+1/p, \ 1<p<\infty,
\end{equation}
or equivalently we assume $ V\in L^1_{\gamma}(\rone) $, where
$$ \  L^1_\gamma(\rone) = \{f \in L^1_{loc}( \rone) ; \langle x \rangle^\gamma f(x) \in L^1(\rone) \}, \ \langle x \rangle^2 = 1+x^2. $$

We shall impose for simplicity in this work the assumption that  the point spectrum of  $\mathcal{H}=-\partial_x^2+V(x)$ is empty, i.e.
\begin{equation}\label{V6a}
   \mathcal{H} f - z f = 0, \ f \in L^2(\rone), \ z \in \cone \Longrightarrow f=0.
\end{equation}

Moreover, we are looking for appropriate decomposition  of the kernel of the  Paley-Littlewood localization operator
\begin{equation}\label{eq.If0}
    \varphi \left(
\frac{ \sqrt{\mathcal{H}}}{2^j}  \right),
\end{equation}
 where  $\varphi(\tau) \in C_0^\infty(\rone \setminus 0)$ is an even function and $j \in \zone.$
We plan to decompose the kernel of the operator \eqref{eq.If0} into a leading term, involving similar localization operators for the unperturbed Hamiltonian $\mathcal{H}_0$
\begin{equation}\label{eq.If1}
\left| \varphi \left(
\frac{ \sqrt{\mathcal{H}_0}}{2^j}  \right)(x,y) \right| \leq \frac{C 2^j}{\langle 2^j (x-y) \rangle^2} \ \ \forall j \in \zone,
\end{equation}
and a remainder satisfying better kernel estimates.

The existence of the wave operators $W_\pm$ is well known according to the results in
\cite{W0}, \cite{AY}, \cite{DF06}, so $W_\pm$ are well  defined operators in $L^p(\rone),$ $1<p<\infty.$

The splitting property
$$ \mathcal{H }W_\pm = W_\pm \mathcal{H}_0$$
implies
that
$$ W_\pm :  \dot{B}^s_p(\rone) \to \dot{B}^s_{p,\mathcal{H}}(\rone), \ \forall s \geq 0, \ 1 < p < \infty.$$

The functional calculus for the perturbed operator $\mathcal{H}$
can be defined as follows
$$ g(\mathcal{H}) = W_+ g(\mathcal{H}_0)W_+^*= W_- g(\mathcal{H}_0)W_-^*$$
for any function $g \in L^\infty_{loc}(\rone).$

The functional calculus enables one to introduce a  Paley-Littlewood partition of unity
$$ 1 = \sum_{j \in {\mathbf Z}}  \varphi \left(\frac{t}{ 2^{ j}}\right) , \ t > 0$$
for an appropriate non-negative cutoff  $\varphi \in C^\infty _0({\rone} _+),$ such that ${\rm supp} \varphi \subseteq [1/2,2].$

The homogeneous Besov spaces $\dot{B}^s_{p}(\rone)$  for $p$, $1 \leq p \leq  \infty$ and $s \geq 0$ can be defined as the closure of $S(\rone)$ functions $f$ with respect to the norm
\begin{equation}\label{eq:BS1} \begin{aligned}
   \|  f\|_{ \dot{B}^s_{p}(\rone)} =
 \left( \sum_{j=-\infty}^\infty 2^{2js} \left\| \varphi \left(
\frac{ \sqrt{\mathcal{H}_0}}{2^j} \right)f\right\|^2_{L^p(\rone)} \right)^{1/2} .
\end{aligned}\end{equation}

The perturbed Besov spaces  $\dot{B}^s_{p, \mathcal{H}}(\rone)$ have been already defined in \eqref{eq:BS2}.

 Using the classical result due to Weder \cite{W0} one can derive the following $L^p$ estimate.
 \begin{equation}\label{eq.Ber1}
  \left\| \varphi \left(
\frac{ \sqrt{\mathcal{H}}}{M} \right) f\right\|_{L^p(\rone)} \leq C  \|f\|_{L^p(\rone)}
 \end{equation}
 for $M>0$, $f \in S(\rone)$, $1<p<\infty$.

First we prove the following high energy kernel estimate needed in the proof of the e\-qui\-va\-len\-ce of homogeneous Besov norms.

\begin{lem}\label{Ber1m}
Suppose the condition \eqref{V6} is fulfilled with $\gamma >1+1/p$ and  the operator $\mathcal{H}$ has no point spectrum.
If  $\varphi$ is an even non-negative function, such that  $\varphi \in C^\infty _0({\mathbf R} \setminus \{0\}),$ then for any $M \in [1,\infty)$, $\sigma \in (0,1)\cap (0,\gamma-1]$ we have
\begin{align}\label{eq.Ber2m}
   \left| \varphi \left( \frac{\sqrt{\mathcal{H}}}{M} \right)(x,y) -\varphi \left(\frac{\sqrt{\mathcal{H}_0}}{M} \right)(x,y)\right| \leq  \\ \nonumber \leq C  \left(\sum_{\pm} \frac{1}{\langle M(x\pm y) \rangle^{ \sigma}}\right)\left(\frac{1}{\langle x \rangle^{\gamma-\sigma}} + \frac{1}{\langle y \rangle^{\gamma-\sigma}}\right).
\end{align}
\end{lem}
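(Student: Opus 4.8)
The plan is to express the difference of the two Paley--Littlewood kernels through the resolvents and then use the stationary scattering representation of the perturbed resolvent. Writing $\varphi$ via its almost-analytic extension (Helffer--Sj\"ostrand formula) or, more elementarily, via a representation of $\varphi(\sqrt{\mathcal{H}}/M)$ as an integral over $[0,\infty)$ of the spectral measure, the quantity in \eqref{eq.Ber2m} reduces to controlling the kernel of $R(\lambda^2+i0)-R_0(\lambda^2+i0)$ against a suitable weight in $\lambda$, uniformly as $\lambda$ ranges over the support dictated by $\varphi(\lambda/M)$, i.e. $\lambda\sim M$. The second resolvent identity gives
\begin{equation*}
R(\zeta)-R_0(\zeta) = -R_0(\zeta)\,V\,R(\zeta),
\end{equation*}
so the integral kernel is $-\int R_0(\zeta)(x,z_1)V(z_1)R(\zeta)(z_1,y)\,dz_1$, and one wants to read off both the $M$-dependence and the spatial weights from the explicit free kernel $R_0(\lambda^2+i0)(x,z)=\frac{i}{2\lambda}e^{i\lambda|x-z|}$ together with bounds on the perturbed kernel $R(\lambda^2+i0)(z,y)$ coming from the Jost function / distorted plane wave machinery of \cite{W} under hypothesis \eqref{V6} and the no-resonance (here no point spectrum plus zero not a resonance) assumption.

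The key steps, in order, are: \emph{(i)} Reduce the kernel difference to a $\lambda$-integral of the resolvent difference kernel: using that $\varphi$ is smooth compactly supported away from $0$, write $\varphi(\sqrt{\mathcal{H}}/M)(x,y)$ in terms of $\int_0^\infty \tilde\varphi_M(\lambda)\,\bigl(\text{Im }R(\lambda^2+i0)(x,y)\bigr)\,d\lambda$ or an equivalent contour formula, where $\tilde\varphi_M$ is supported in $\lambda\sim M$ and $\|\partial_\lambda^k\tilde\varphi_M\|_\infty\lesssim M^{-k}$; subtract the free analogue. \emph{(ii)} Insert the second resolvent identity and the explicit free kernel, so that the remaining object is
\begin{equation*}
\int_0^\infty \tilde\varphi_M(\lambda)\int_{\rone} \frac{e^{i\lambda|x-z|}}{2i\lambda}\,V(z)\,R(\lambda^2+i0)(z,y)\,dz\,d\lambda
\end{equation*}
plus its complex conjugate/reflected counterparts (the $x\pm y$ both appear because the 1D free kernel and the Jost solutions produce phases $e^{i\lambda|x-z|}$ and $e^{i\lambda(\pm y\mp z)}$, yielding net phases $e^{i\lambda(x\pm y)}$ after the $z$-integration is localized near the supports forced by $V$ and the oscillation). \emph{(iii)} Use the known pointwise bounds on $R(\lambda^2+i0)(z,y)$ and on its $\lambda$-derivative (of size $1/\lambda$ and $1/\lambda^2$ respectively, with at most $\langle z\rangle$, $\langle y\rangle$ growth controlled against the weight $\langle z\rangle^\gamma V(z)\in L^1$), then integrate by parts $\lceil \sigma\rceil$ times (or interpolate: once, combined with the trivial bound) in $\lambda$ to convert each power of the oscillatory phase $|x\pm y|$ into decay $\langle M(x\pm y)\rangle^{-\sigma}$ — this is the standard non-stationary-phase trick producing the factor $\sum_\pm \langle M(x\pm y)\rangle^{-\sigma}$ with the loss $M^{?}$ absorbed by $\tilde\varphi_M$'s normalization. \emph{(iv)} Perform the $z$-integration against $V(z)$, distributing the weight: $\langle z\rangle^{-\gamma}\lesssim \langle x\rangle^{-(\gamma-\sigma)}\langle x-z\rangle^{\gamma-\sigma}$ type splittings, together with $\langle z\rangle^\gamma|V(z)|\in L^1$, to land on $\langle x\rangle^{-(\gamma-\sigma)}+\langle y\rangle^{-(\gamma-\sigma)}$; the symmetry in $x,y$ and in $\pm$ is what forces both terms in each factor of \eqref{eq.Ber2m}.

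The main obstacle I expect is step \emph{(iii)}–\emph{(iv)} done simultaneously: one must integrate by parts in $\lambda$ to gain the oscillatory decay $\langle M(x\pm y)\rangle^{-\sigma}$ \emph{without} losing integrability in $z$ against $V$, since each $\lambda$-derivative that hits $e^{i\lambda|x-z|}$ rather than the resolvent produces a factor $|x-z|$, which can be large. The resolution is to interpolate between no integration by parts (giving $O(1)$, i.e.\ $\sigma=0$) and one integration by parts (giving $\langle M(x\pm y)\rangle^{-1}$ at the cost of one factor $|x-z|\le \langle x\rangle\langle z\rangle$ or $\langle y\rangle\langle z\rangle$), which yields exactly the borderline exponents $\sigma\le 1$ and $\gamma-\sigma$ on the spatial weights, and explains the precise hypothesis $\sigma\in(0,1)\cap(0,\gamma-1]$. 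A secondary technical point is handling the near-threshold behaviour $\lambda\to 0$ — but since $\varphi$ vanishes near $0$ and $M\ge 1$, the support of $\tilde\varphi_M$ stays in $\lambda\gtrsim 1$, so the no-resonance hypothesis is only needed to ensure the perturbed resolvent bounds hold \emph{uniformly} down to $\lambda\sim 1$ (and, via the accompanying low-energy lemma elsewhere in the paper, below it), keeping all constants independent of $M\in[1,\infty)$.
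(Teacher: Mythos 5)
Your strategy is sound and would work, but it is organized differently from the paper's argument, so a comparison is worth making. The paper does not pass through Stone's formula and the second resolvent identity at all: it starts directly from the generalized-eigenfunction representation \eqref{eq.IFC1}, writes the kernel difference as $M\int\varphi(\tau)\bigl[T(M\tau)m_+(y,M\tau)m_-(x,M\tau)-1\bigr]e^{-\im M\tau(x-y)}d\tau$, expands $Tm_+m_--1$ algebraically into remainder pieces built from $m_\pm-1$ (using the identities \eqref{eq:Ber7}--\eqref{eq:Ber11} to generate the $e^{\pm i\tau(x+y)}$ phases when $x,y$ lie on the same side of the origin), and then applies exactly your two mechanisms: the fractional integration-by-parts bound \eqref{eqIP1} in the spectral variable for the factor $\langle M(x\pm y)\rangle^{-\sigma}$, and the weighted H\"older estimates on $m_\pm(x,\tau)-1$ from Lemma \ref{lem:Jostk0} for the spatial weights $\langle x\rangle^{-(\gamma-\sigma)}+\langle y\rangle^{-(\gamma-\sigma)}$; the prefactor $M$ is absorbed via $T(\tau)=1+O(\tau^{-1})$ and $|m_\pm-1|=O(\tau^{-1})$, which in your scheme corresponds to the two factors of $\lambda^{-1}$ in $R_0VR$ beating the $\lambda\,d\lambda$ of the spectral measure. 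What your route buys is a cleaner conceptual reduction (everything hangs on bounds for the perturbed resolvent kernel and an explicit $z$-integration against $V$); what it costs is that you must separately establish pointwise and H\"older-in-$\lambda$ bounds for $R(\lambda^2+i0)(z,y)$ and then redo the weight-distribution over $z$, whereas the paper has already packaged precisely that information into the Gronwall-type estimates for $m_\pm-1$ (the $z$-integration against $\langle z\rangle^\gamma V$ is hidden inside the proof of Lemma \ref{lem:Jostk0}). Your identification of the delicate point --- interpolating between zero and one integration by parts so that the factor $|x-z|$ produced by differentiating the phase is only raised to the power $\sigma$ and can be paid for by $\langle z\rangle^{\gamma}V\in L^1$ with $\sigma\le\gamma-1$ --- is exactly the content of \eqref{eqIP1} combined with \eqref{eq.4k0}, so the hypothesis $\sigma\in(0,1)\cap(0,\gamma-1]$ is accounted for. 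One small correction: the lemma as stated assumes only the absence of point spectrum, not that $0$ fails to be a resonance; since $\varphi(\cdot/M)$ is supported in $\lambda\gtrsim 1$ for $M\ge 1$, no zero-energy hypothesis enters here at all (you half-acknowledge this, but your parenthetical ``no point spectrum plus zero not a resonance'' overstates what is needed).
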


The proof is based on careful evaluation of the kernel of the operator $ \varphi \left(
 \sqrt{\mathcal{H}}/M\right),$ having the representation
\begin{alignat}{2}\label{eq.IFC1}
  \varphi \left( \sqrt{\mathcal{H}}/M\right)(x,y) &=  - \frac  1 {2 \pi} \int_\rone \varphi \left(\tau/M\right) T(\tau) f_+(y,\tau) f_-(x,\tau)  d\tau,\ \ \mbox{ when} \ \ x < y,
\end{alignat}
and
\begin{alignat}{2}\label{eq.IFC1i}
  \varphi \left( \sqrt{\mathcal{H}}/M\right)(x,y) &= - \frac  1 {2 \pi} \int_\rone \varphi \left(\tau/M\right) T(\tau) f_-(y,\tau) f_+(x,\tau)  d\tau, \ \ \mbox{otherwise.}
\end{alignat}
Here and below $T(\tau)$ is the transmission coefficient (see \eqref{eq:kernel35}  for its definition). Moreover, $$ f_\pm (x,\tau) = e^{\pm \im  \tau x} m_\pm (x,\tau), $$
and $f_\pm(x,\tau)$ are the Jost functions (see Section 2 in \cite{DeiTru}) satisfying the integral equations (Marchenko type equations)
\begin{eqnarray}\label{eq.Igra1n}
       m_+(x,\tau) = 1+ \int_x^\infty D(t-x,\tau) V(t) m_+(t,\tau) dt,
    \\ \nonumber
       m_-(x,\tau) = 1+ \int_{-\infty}^x D(x-t,\tau) V(t) m_-(t,\tau) dt,
    \end{eqnarray}
with
\begin{equation}\label{eq.IAE3}
     D(t,\tau) = \frac{e^{2it\tau}-1}{2i\tau} = \int_0^t e^{2iy\tau} dy.
\end{equation}
The kernel $\varphi \left( \sqrt{\mathcal{H}}/M\right)(x,y)$ is symmetric, that it
\begin{alignat}{2}\label{eq.IFC1aa}
  \varphi \left( \sqrt{\mathcal{H}}/M\right)(x,y) &=  \varphi \left( \sqrt{\mathcal{H}}/M\right)(x,y).
\end{alignat}

For the low energy domain $M \in (0,1],$ we  have the following  estimate.

\begin{lem}\label{Ber1}
Suppose the condition \eqref{V6} is fulfilled with $\gamma >1+1/p$, the operator $\mathcal{H}$ has no point spectrum and $0$ is not a resonance point  for $\mathcal{H}.$
If  $\varphi$ is an even non-negative function, such that  $\varphi \in C^\infty _0({\mathbf R} \setminus \{0\}),$ then for any $M \in (0,1]$ and $\sigma \in (0,1)\cap (0,\gamma-1]$ we have
\begin{gather}\label{eq.Ber2}
   \left| \varphi \left( \frac{\sqrt{\mathcal{H}}}{M} \right)(x,y) -  K_M(x,y)\right| \leq  \\ \nonumber \leq C M \left(\sum_{\pm} \frac{1}{\langle M(x\pm y) \rangle^{ \sigma}}\right)\left(\frac{1}{\langle x \rangle^{\gamma-\sigma}} + \frac{1}{\langle y \rangle^{\gamma-\sigma}}\right),
\end{gather}
where
\begin{align}\label{eq.pasq10}
  K_M(x,y) = c\int_\R e^{-\im \tau(x - y)}\varphi\left(\frac{\tau}{M}\right) b(x,y,\tau)\,d\tau
\end{align}
with symmetric kernel $ b(x,y,\tau) = b(y,x,\tau)$ and
\begin{equation*}
b(x,y,\tau)=
\begin{cases}
T(\tau) &x<0<y,\\
(R_+(\tau)+1) e^{2i\tau x}-e^{2i\tau x}+1 & 0<x<y,\\
(R_-(\tau)+1)e^{-2i\tau y}-e^{-2i\tau y}+1 & x<y<0.
\end{cases}
\end{equation*}
\end{lem}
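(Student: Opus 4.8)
The plan is to read off $K_M$ as the ``leading part'' of the kernel \eqref{eq.IFC1}--\eqref{eq.IFC1i} and then to estimate the remainder by a low-energy oscillatory-integral argument. By the symmetry \eqref{eq.IFC1aa} one may assume $x\le y$; I would split $\{x\le y\}$ into the three regions $x<y<0$, $x<0<y$, $0<x<y$ that appear in the definition of $b$. In the middle region the factors $f_+(y,\tau)=e^{i\tau y}m_+(y,\tau)$ and $f_-(x,\tau)=e^{-i\tau x}m_-(x,\tau)$ are already ``of the right shape'' (each $m_\pm$ tends to $1$ on the side where $y$, resp.\ $x$, lies), whereas in the region $0<x<y$ one first replaces $f_-(x,\tau)$ using the scattering identity $T(\tau)f_-(x,\tau)=f_+(x,-\tau)+R_+(\tau)f_+(x,\tau)$ (see Section~2 of \cite{DeiTru}), and in the region $x<y<0$ one replaces $f_+(y,\tau)$ using $T(\tau)f_+(y,\tau)=f_-(y,-\tau)+R_-(\tau)f_-(y,\tau)$. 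After these substitutions the integrand of \eqref{eq.IFC1}--\eqref{eq.IFC1i} becomes, in each region, a bounded linear combination of $e^{i\tau(y-x)}$, $e^{i\tau(y+x)}$ with coefficients among $1$, $R_\pm(\tau)$, $T(\tau)$, times products of two factors $m_\pm(\cdot,\pm\tau)$; freezing every such factor to its limiting value $1$ reproduces precisely $K_M$ with the stated $b$, so that $\varphi(\sqrt{\mathcal{H}}/M)(x,y)-K_M(x,y)$ is a finite sum of integrals, each carrying at least one factor $m_\pm(\cdot,\pm\tau)-1$.

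The assumption that $0$ is not a resonance makes this decomposition effective at low frequency. No resonance means the Wronskian $W(\tau)$ of the two Jost solutions does not vanish at $\tau=0$; hence $T(\tau)=O(\tau)$, $1+R_\pm(\tau)=O(\tau)$ as $\tau\to0$, and $T$, $R_\pm$, $1/W$, together with the functions $\tau\mapsto m_\pm(z,\tau)$ (uniformly in $z$ on the relevant half-line), are H\"older continuous of order $\sigma$ near $\tau=0$; this uses $\sigma\le\gamma-1$ and follows from \eqref{eq.Igra1n}, \eqref{eq.IAE3} and the weight \eqref{V6}. In particular the spectral density in \eqref{eq.IFC1}--\eqref{eq.IFC1i} is regular at $\tau=0$, so no $1/\tau$ singularity is produced when the integration variable is of size $M$. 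This is exactly the input that is not needed for Lemma~\ref{Ber1m}, where $M\ge1$ keeps $\tau$ away from $0$; it is also the reason for the gain of the factor $M$ in \eqref{eq.Ber2}: once $K_M$ has absorbed the free and leading reflection/transmission contributions, the surviving amplitudes are \emph{bounded} rather than of size $1/M$ on $\operatorname{supp}\varphi(\cdot/M)$, an interval of length comparable to $M$.

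For the remainder I would expand each factor $m_\pm(z,\tau)-1$ by the Marchenko equation \eqref{eq.Igra1n} and use \eqref{eq.IAE3} to write $D(z,\tau)=\int_0^{z}e^{\pm 2iw\tau}\,dw$; absorbing the phase of $D$ into the exponential already present turns the $\tau$-integral into $\int\varphi(\tau/M)A(t,\tau)e^{i\tau\Phi}\,d\tau$, where $\Phi$ is a single affine function of $\pm x$, $\pm y$ and a running integration variable $t$, and the amplitude $A$ (a product of $\varphi(\tau/M)$, the bounded $D$-factor, an $m_\pm$-factor, and possibly $T(\tau)$ or $1+R_\pm(\tau)$) is $C^\sigma$ in $\tau$, uniformly, by $|D(z,\tau)|\le\min(|z|,C/|\tau|)$, the H\"older regularity above, and $\sigma\le\gamma-1$. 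Interpolating the trivial bound $|\int\varphi(\tau/M)A e^{i\tau\Phi}\,d\tau|\le C\,M\sup|A|$ with the bound obtained from $\sigma$ (fractional) integrations by parts in $\tau$ gives control of order $M\,\langle M\Phi\rangle^{-\sigma}$ times a $C^\sigma_\tau$-norm of $A$; each $\tau$-derivative falling on $D$ produces a power of the distance from $t$ to $x$ (or $y$), which, integrated in $t$ against $\langle\,\cdot\,\rangle^{\gamma}|V|\in L^1$, costs exactly the weights $\langle x\rangle^{\sigma-\gamma}$, $\langle y\rangle^{\sigma-\gamma}$. Since on the relevant pieces $\langle M\Phi\rangle$ dominates one of $\langle M(x-y)\rangle$, $\langle M(x+y)\rangle$, this yields \eqref{eq.Ber2}; the terms quadratic in the $m_\pm-1$ and those in which a factor $T$ or $1+R_\pm$ supplies the extra smallness are estimated the same way and are no worse.

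The step I expect to be the main obstacle is precisely this last oscillatory estimate at low frequency: the phase $\Phi$ need not dominate $x\pm y$ on every piece (for instance, in the region $0<x<y$ the contribution of $m_+(x,-\tau)-1$ has $\Phi=y-x-2w$, which changes sign as $w$ runs over the region of integration), so one must split the $t$- and $w$-integrals according to the sign and size of $\Phi$, use the ``$1-e^{2i\tau x}$''/``$1-e^{-2i\tau y}$'' pieces built into $b$ to absorb the non-oscillatory leftovers, and arrange the trade of $\sigma$ powers of spatial decay against $\sigma$ powers of oscillatory decay so that the borderline weight $\gamma>1+1/p$ is never exceeded (a purely pointwise size estimate delivers only the weaker rate $\langle x\rangle^{1-\gamma}$). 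A minor additional point is to check the consistency of the formula for $b$, hence of \eqref{eq.Ber2}, as $x$ or $y$ crosses $0$, which amounts to matching the one-sided limits of $K_M$ at those thresholds.
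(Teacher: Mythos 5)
Your proposal is correct and follows essentially the same route as the paper: the same three-region split, the same use of the scattering identities \eqref{eq:Ber7}, \eqref{eq:Ber11} to reduce to $m_+$ (resp.\ $m_-$) on the appropriate side, the identification of $K_M$ by freezing the $m_\pm$ factors to $1$, and the same fractional integration by parts \eqref{eqIP1} giving $M\langle M(x\pm y)\rangle^{-\sigma}$ times a weighted $C^{0,\sigma}_\tau$-norm of the amplitude. The only real difference is that the paper never unfolds the Marchenko equation inside the oscillatory integral: it keeps the phase equal to $\tau(x-y)$ or $\tau(x+y)$ and treats each factor $m_\pm(\cdot,\pm\tau)-1$ (and $T(\tau)/\tau$, $(R_\pm(\tau)+1)/\tau$) purely as an amplitude whose weighted H\"older norms are supplied by Lemma \ref{lem:Jostk0}, so the ``main obstacle'' you flag --- a phase $\Phi$ involving the running Marchenko variable that need not dominate $x\pm y$ --- simply never arises.
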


\begin{rem} The precise definition of the notion of resonance point at the origin is given in Definition \ref{dres} by the aid of the relation $$T(0)=0.$$
\end{rem}

Our next result treats the equivalence of the homogeneous Besov norms for the free and perturbed Hamiltonians. Here we meet the natural obstruction to cover all positive values of $s$ so we impose a condition
\begin{equation}\label{eq.HH2}
    s < \frac{1}{p},
\end{equation}
similar to the Hardy inequality restrictions.
\begin{theorem} \label{MT1}
Suppose $$V\in L^1_\gamma(\rone), \  \gamma > 1+1/p, \ 0 \leq s < 1/p , \  p \in (1,\infty),$$ the operator $\mathcal{H}$ has no point spectrum and $0$ is not a resonance for $\mathcal{H}.$ Then we have
$$  \|  f\|_{ \dot{B}^s_{p,\mathcal{H}}(\rone)} \sim \|  f\|_{ \dot{B}^s_{p}(\rone)}.$$
\end{theorem}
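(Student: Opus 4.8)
The plan is to reduce the norm equivalence to the bilinear Paley–Littlewood estimate \eqref{eq.III5}. Writing $\psi(\sqrt{\mathcal H_0}/2^j)$ and $\varphi(\sqrt{\mathcal H}/2^k)$ for the localizations at frequencies $\sim 2^j$ and $\sim 2^k$, and using $\sum_k \varphi(\tau/2^k)=1$, one has $\varphi(\sqrt{\mathcal H_0}/2^j) = \sum_k \varphi(\sqrt{\mathcal H_0}/2^j)\varphi(\sqrt{\mathcal H}/2^k)$, and then by Minkowski, Cauchy–Schwarz in $k$ against the factor $2^{-|j-k|s}$, and the $L^p$ boundedness \eqref{eq.Ber1} one controls $\|f\|_{\dot B^s_p}$ by $\|f\|_{\dot B^s_{p,\mathcal H}}$. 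The reverse inequality is symmetric once we also have the $L^p$-boundedness of $\varphi(\sqrt{\mathcal H_0}/2^j)$ (classical) and the analog of \eqref{eq.III5} with the roles of $\mathcal H$ and $\mathcal H_0$ exchanged — which follows by the same kernel analysis because Lemmas \ref{Ber1m} and \ref{Ber1} are symmetric in the two operators up to the explicit kernels $\varphi(\sqrt{\mathcal H_0}/M)(x,y)$ and $K_M(x,y)$, both of which are themselves bounded Calderón–Zygmund-type kernels at scale $M$. So the whole theorem rests on proving, for suitable ranges of $j,k$, the off-diagonal decay \eqref{eq.III5}.

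The key step is therefore to estimate the kernel of $\varphi(\sqrt{\mathcal H}/2^k)\varphi(\sqrt{\mathcal H_0}/2^j)$. I would split into the high-frequency regime $2^k\ge 1$ (and WLOG $2^j\ge 1$ after absorbing low frequencies into the main term) and the low-frequency regime. In the high regime, write $\varphi(\sqrt{\mathcal H}/2^k) = \varphi(\sqrt{\mathcal H_0}/2^k) + R_k$, where by Lemma \ref{Ber1m} the remainder $R_k$ has kernel bounded by $C\big(\sum_\pm \langle 2^k(x\pm y)\rangle^{-\sigma}\big)\big(\langle x\rangle^{-(\gamma-\sigma)}+\langle y\rangle^{-(\gamma-\sigma)}\big)$ with $\sigma\in(0,1)\cap(0,\gamma-1]$. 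The product of the two free localizations $\varphi(\sqrt{\mathcal H_0}/2^k)\varphi(\sqrt{\mathcal H_0}/2^j)$ vanishes unless $|j-k|\le C$ (disjoint Fourier supports), so it contributes only when $j\sim k$, where \eqref{eq.III5} is just \eqref{eq.Ber1}; the real content is the cross terms $R_k\,\varphi(\sqrt{\mathcal H_0}/2^j)$ and $\varphi(\sqrt{\mathcal H_0}/2^k)\,R_j'$ (the latter from the reverse decomposition). For such a term one composes the kernel bound for $R_k$ with the free kernel bound \eqref{eq.If1}, i.e. one integrates
\[
\int_\R \Big(\sum_\pm \frac{1}{\langle 2^k(x\pm z)\rangle^{\sigma}}\Big)\Big(\frac{1}{\langle x\rangle^{\gamma-\sigma}}+\frac{1}{\langle z\rangle^{\gamma-\sigma}}\Big)\cdot \frac{2^j}{\langle 2^j(z-y)\rangle^2}\,dz,
\]
and then estimates the resulting $L^p$ operator norm. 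Choosing $\sigma = s + \varepsilon$ for small $\varepsilon>0$ — legitimate since $s<1/p\le 1$ and $s<\gamma-1$ by \eqref{V6} — the decaying weights $\langle x\rangle^{-(\gamma-\sigma)}$, together with Schur's test / Young's inequality and the gain $2^{-(k-j)_+}$ or $2^{-(j-k)_+}$ coming from the mismatch of scales in the two kernels, yields the factor $2^{-|j-k|s}$. The low-frequency regime $2^k\le 1$ is handled identically using Lemma \ref{Ber1}: there $\varphi(\sqrt{\mathcal H}/2^k)=K_{2^k} + (\text{remainder with the extra gain }M=2^k)$, the remainder is treated as above (with the bonus factor $2^k$), and $K_{2^k}$ has, by its explicit oscillatory-integral form \eqref{eq.pasq10} with the bounded symmetric symbol $b$, essentially the same Calderón–Zygmund kernel bound as the free localization $\varphi(\sqrt{\mathcal H_0}/2^k)$, so it plays the role of the "free" piece and again only contributes for $j\sim k$.

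The main obstacle is the last bookkeeping step: organizing the double sum over $j,k$ so that the per-pair bound $2^{-|j-k|s}\|f\|_{L^p}$ assembles into the $\ell^2$-in-$j$ Besov norm. Concretely, after proving \eqref{eq.III5}, one has $\|\varphi(\sqrt{\mathcal H_0}/2^j)f\|_{L^p}\le C\sum_k 2^{-|j-k|s}\|\varphi(\sqrt{\mathcal H}/2^k)f\|_{L^p}$; multiplying by $2^{js}$, the matrix $(2^{(j-k)s}2^{-|j-k|s})_{j,k}$ has uniformly bounded rows and columns in $\ell^1$ (here $s>0$ is essential, and $s=0$ needs a separate, easier argument using only $L^p$-boundedness and almost-orthogonality), so Schur's lemma gives $\ell^2$-boundedness and hence $\|f\|_{\dot B^s_p}\lesssim \|f\|_{\dot B^s_{p,\mathcal H}}$; the reverse is symmetric. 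A secondary technical point is justifying the decomposition and all kernel manipulations on the dense class $S(\R)$ and the role of hypothesis \eqref{V6a} (no point spectrum) and the non-resonance condition, which are exactly what make the representation \eqref{eq.IFC1}–\eqref{eq.IFC1i} and Lemma \ref{Ber1} valid; these are invoked as black boxes here.
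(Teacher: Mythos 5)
Your global architecture — reduce the equivalence to bilinear off-diagonal estimates for $\varphi(\sqrt{\mathcal H}/2^k)\,\psi(\sqrt{\mathcal H_0}/2^j)$ and assemble the double sum by a discrete Young/Schur argument with the matrix $2^{(j-k)s}2^{-|j-k|s'}$ — is exactly the paper's. The gap is in how you propose to prove the bilinear estimates, and it is concentrated in the low-frequency leading term. You assert that $K_M$ (for $M=2^k\le 1$) ``has essentially the same Calder\'on--Zygmund kernel bound as the free localization, so it plays the role of the free piece and again only contributes for $j\sim k$.'' This is false: $K_M$ is not a Fourier multiplier. Its kernel \eqref{eq.pasq10} carries the sharp spatial cutoffs $\mathds{1}_{\pm x>0}\mathds{1}_{\pm y>0}$ and reflection phases $e^{-i\tau(x+y)}$, so $K_M\circ\varphi(\sqrt{\mathcal H_0}/\Lambda)$ does \emph{not} vanish when the frequency supports are disjoint, and this composition is precisely where the work lies. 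The paper computes the composed kernel explicitly,
$\tilde K_{M,\Lambda}(x,y)=\iint e^{-i\tau x}e^{-iy\xi}\varphi(\tau/M)\varphi(\xi/\Lambda)(\tau-\xi)^{-1}\,d\xi\,d\tau$,
where the factor $(\tau-\xi)^{-1}$ comes from the Fourier transform of the Heaviside cutoff, and then integrates by parts (using the $C^1$/$C^{0,\alpha}$ regularity of $T$ and $R_\pm$, $\alpha>1/p$, and the non-resonance relations $T(\tau)=O(\tau)$, $R_\pm(\tau)+1=O(\tau)$) to get $|\tilde K_{M,\Lambda}|\lesssim M\Lambda\,\langle Mx\rangle^{-2}\langle\Lambda y\rangle^{-2}\max(M,\Lambda)^{-1}$ and hence the gain $(\Lambda/M)^{1/p}$. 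So the non-resonance hypothesis is not merely a black box validating the representation formulas; it is used quantitatively here, and without this step your argument does not produce any off-diagonal decay for $j\ll k<0$.

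A second, lesser issue: in the high-energy regime with $\Lambda=2^j\ge M=2^k\ge 1$ you claim a gain ``from the mismatch of scales in the two kernels.'' The pointwise bound of Lemma \ref{Ber1m} on $R_k$ carries no smoothness in $y$, so composing it with the free kernel $2^j\langle 2^j(z-y)\rangle^{-2}$ (an approximate identity at the \emph{finer} scale $2^{-j}$) yields no decay in $2^{j-k}$; one would need to exploit the oscillation of $f_\Lambda$, which absolute kernel bounds destroy. The paper avoids this entirely in Lemma \ref{l.ebn1} by writing $\varphi(\sqrt{\mathcal H}/M)f_\Lambda$ through fractional powers $\mathcal H^{\pm s/2}$, $\mathcal H_0^{\pm s/2}$ and the resolvent identity $\mathcal H^{s/2}-\mathcal H_0^{s/2}=C\int_0^\infty\lambda^{s/2}(\lambda+\mathcal H)^{-1}V(\lambda+\mathcal H_0)^{-1}d\lambda$ — a genuinely different mechanism. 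For $s>0$ this particular case could in principle be absorbed by uniform $L^p$-boundedness alone (as the paper itself does for $j\ge k$ in the low-energy regime), but the theorem also claims $s=0$, and the mechanism you invoke is simply not present. Finally, the reverse inequality is not literally ``symmetric'': the paper re-does the composed-kernel computation with the order of the operators exchanged (Corollaries \ref{cor:invlow} and \ref{cor:invhigh}).
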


As immediate consequence we have the following.
\begin{corollary} \label{cMT1}
Suppose the assumptions of Theorem \ref{MT1} are fulfilled. Then for any $p \in (1,\infty),$ any $s \in [0,1/p),$ we have
$$   W_\pm :  \dot{B}^s_p(\rone) \to \dot{B}^s_{p}(\rone).$$
\end{corollary}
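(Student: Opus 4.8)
The plan is to derive Corollary \ref{cMT1} as an essentially immediate consequence of Theorem \ref{MT1} together with the splitting property of the wave operators. First I would recall that the intertwining relation $\mathcal{H} W_\pm = W_\pm \mathcal{H}_0$ upgrades, via the functional calculus \eqref{eq.III2}, to the operator identity
\begin{equation*}
  \varphi\!\left(\frac{\sqrt{\mathcal{H}}}{2^j}\right) W_\pm \;=\; W_\pm\, \varphi\!\left(\frac{\sqrt{\mathcal{H}_0}}{2^j}\right), \qquad j \in \zone,
\end{equation*}
which holds on $S(\rone)$ since $\varphi \in C_0^\infty(\rone\setminus 0)$ is a bounded Borel function of the nonnegative self-adjoint operator. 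Applying the $L^p$-boundedness of $W_\pm$ (available from \cite{W0}, \cite{AY}, \cite{DF06} under hypothesis \eqref{V6}) to each Paley--Littlewood piece and summing the weighted squares gives, for $f \in S(\rone)$,
\begin{equation*}
  \| W_\pm f \|_{\dot{B}^s_{p,\mathcal{H}}(\rone)}
  = \left( \sum_{j} 2^{2js} \left\| W_\pm\, \varphi\!\left(\tfrac{\sqrt{\mathcal{H}_0}}{2^j}\right) f \right\|_{L^p}^2 \right)^{1/2}
  \le \| W_\pm \|_{L^p \to L^p}\, \| f \|_{\dot{B}^s_{p}(\rone)}.
\end{equation*}
Thus $W_\pm$ maps $\dot{B}^s_p(\rone)$ boundedly into the perturbed space $\dot{B}^s_{p,\mathcal{H}}(\rone)$.

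The second and decisive step is to invoke Theorem \ref{MT1}, which under the stated hypotheses ($V \in L^1_\gamma$ with $\gamma > 1+1/p$, no point spectrum, $0$ not a resonance, $0 \le s < 1/p$) asserts the norm equivalence $\| f \|_{\dot{B}^s_{p,\mathcal{H}}(\rone)} \sim \| f \|_{\dot{B}^s_{p}(\rone)}$. Combining this with the bound just obtained yields $\| W_\pm f \|_{\dot{B}^s_{p}(\rone)} \lesssim \| f \|_{\dot{B}^s_{p}(\rone)}$ for all $f \in S(\rone)$. Since $S(\rone)$ is dense in $\dot{B}^s_p(\rone)$ by definition of the latter as the closure of Schwartz functions under the Besov norm, $W_\pm$ extends by continuity to a bounded operator $\dot{B}^s_p(\rone) \to \dot{B}^s_p(\rone)$, which is exactly the assertion of the Corollary.

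I would expect no genuine obstacle here, as all the analytic work is front-loaded into Theorem \ref{MT1} and the $L^p$ mapping results; the Corollary is a soft deduction. The only points deserving a line of care are: (i) verifying that the intertwining identity at the level of spectral multipliers is legitimate — this follows from \eqref{eq.III2} applied with $g = \varphi(\cdot/2^j)$, noting $W_\pm^* W_\pm = \mathrm{Id}$ on the absolutely continuous subspace, which is all of $L^2$ here because the point spectrum is empty; and (ii) confirming that the passage from the dense class $S(\rone)$ to the whole space respects the identification of $\dot{B}^s_{p,\mathcal{H}}$ with $\dot{B}^s_p$ as sets, which again is immediate once the norms are equivalent on the common dense core. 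Hence the proof is complete modulo these routine density and functional-calculus remarks.
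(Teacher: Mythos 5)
Your proposal is correct and follows exactly the route the paper intends: the intertwining property plus $L^p$-boundedness of $W_\pm$ gives $W_\pm:\dot{B}^s_p(\rone)\to\dot{B}^s_{p,\mathcal{H}}(\rone)$, and Theorem \ref{MT1} identifies the target with $\dot{B}^s_p(\rone)$; the paper treats this as an immediate consequence and your write-up simply makes the routine functional-calculus and density details explicit.
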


The authors are grateful to Atanas Stefanov for the critical remarks and discussions during the preparation of the work.

We shall present the plan of the work.

\section{Counterexample for equivalence of homogeneous Besov spaces}

In this section we consider the simplest  case $p=2$ and we shall prove Theorem \ref{l.co1}, therefore we shall show that the equivalence property
\begin{equation}\label{eq.c1}
   \|(\mathcal{H}_0+V)^{n/4} u \|_{L^2(\rone^n)} \sim  \|(\mathcal{H}_0)^{n/4}  u \|_{L^2(\rone^n)}
\end{equation}
is not true for $n=1,2.$
\begin{proof}[Proof of Theorem \ref{l.co1}]
Let us suppose that the relation \eqref{eq.c1} holds. First, we use an interpolation argument and show that
\begin{equation}\label{eq.co1}
  \| \mathcal{H}_0^a u \|_{L^2(\rone^n)}^2 \geq \| V^a u \|_{L^2(\rone^n)}^2 ,
\end{equation}
provided $0 \leq {\rm Re} a \leq 1/2.$ Indeed, we have the property
$$ \| \mathcal{H}_0^{\im b} u \|_{L^2(\rone^n)}^2 = \|  u \|_{L^2(\rone^n)}^2, \ \forall b \in \rone,$$
and
$$ \| V^{\im b} u \|_{L^2(\rone^n)}^2 = \|  u \|_{L^2(\rone)}^2, \ \forall b \in \rone,$$ so we have to check \eqref{eq.co1} only for $a=1/2.$
The equivalence of the norms \eqref{eq.co2} implies that
\begin{align*}
 \|\mathcal{H}_0^{1/2} u\|_{L^2(\rone^n)}\approx\| \left( - \Delta + V \right)^{1/2} u \|_{L^2(\rone^n)}^2 =& \langle (-\Delta + V) u, u \rangle_{L^2(\rone^n)}\\
\geq& \langle V u, u \rangle_{L^2(\rone^n)} = \| V^{1/2}u \|_{L^2(\rone^n)}^2 ,
 \end{align*}
and  we conclude that \eqref{eq.co1} is true.
Then, assuming \eqref{eq.c1} is fulfilled and applying the proved inequality with $a=n/4\leq 1/2$, we get
\begin{equation}\label{eq.co2}
   \int_{\R^n} (V(x))^{n/2} |u(x)|^2 dx \leq C \|D^{n/2}u\|_{L^2(\R^n)}^2, \  D=(-\Delta)^{1/2}.
\end{equation}
Taking $u $ in the Schwartz class $ S(\R^n) $
of rapidly decreasing function, we can apply a rescaling argument. Indeed, considering the dilation
$$ u_\lambda(x) =  u(x\lambda),$$
we find
$$   \|D^{n/2}u_\lambda\|_{L^2(\R^n)}^2 =  \underbrace{\|D^{n/2}u\|_{L^2(\R^n)}^2}_{\mbox{constant in $\lambda$}} $$
and
$$ \lim_{\lambda \searrow 0}\int_{\R^n} V^{n/2}(x) |u_\lambda(x)|^2 dx = \left(\int_{\R^n} V^{n/2}(x)  dx\right) |u(0)|^2. $$ In this way we deduce
\begin{equation}\label{eq.co2a1}
  |u(0)|^2 \left(\int_{\R^n} V^{n/2}(x)  dx\right) \leq  C \|D^{n/2}u\|_{L^2(\R^n)}^2.
\end{equation}
The homogeneous norm
$$ \|D^{n/2}u\|_{L^2(\R^n)}^2$$ is also invariant under translations, i.e. setting
$$ u^{(\tau)}(x) = u(x+\tau), $$ we have
$$ \widehat{u^{(\tau)}}(\xi) = e^{-\im \tau \xi} \widehat{u}(\xi) $$ and
$$  \|D^{n/2}u^{(\tau)}\|_{L^2(\R^n)}^2 =  \||\xi|^{n/2}\widehat{u^{(\tau)}}\|_{L^2(\R^n)}^2 = \||\xi|^{n/2}\widehat{u}\|_{L^2(\R^n)}^2 = \|D^{n/2}u\|_{L^2(\R^n)}^2,$$ so applying \eqref{eq.co2a1} with $u^{(\tau)}$ in the place of  $u$, we find
$$
  |u(\tau)|^2 \int_{\R^n} V^{n/2}(x)  dx \leq  C \|D^{n/2}u\|_{L^2(\R^n)}^2,
$$
or equivalently
\begin{equation}\label{eq.co2a2}
   \|u\|^2_{L^\infty(\R^n)} \leq C_1 \|D^{n/2}u\|_{L^2(\R^n)}^2,
\end{equation}
where
$$ C_1 = \frac{C}{\|V^{n/2}\|_{L^1(\R^n)}}.$$

The substitution  $ \phi = D^{n/2}u $ enables us to rewrite \eqref{eq.co2a2} as
\begin{equation}\label{eq.co2a3}
      \|I_{n/2}(\phi)\|^2_{L^\infty(\R^n)} \leq C_1 \|\phi\|_{L^2(\R^n)}^2,
\end{equation}
where
$$ I_\alpha(\phi)(x) = D^{-\alpha}(\phi)(x) = c \int_{\R^n} |x-y|^{-n+\alpha} \phi(y) dy, \  \alpha \in (0,n) $$
are the Riesz operators.

It is easy to show that \eqref{eq.co2a3} leads to a contradiction. Indeed, taking
$$ \phi_N(x) = \sum_{j=0}^N |x|^{-n/2}\  \mathds{1}_{2^j \leq |x| \leq  2^{j+1}} (x), $$
with $N \geq 2$ sufficiently large and being $\mathds{1}_A(x)$ the characteristic function of the set $A$, we can use the estimates
$$ I_{n/2}(\phi_N)(0) \geq  \left( \sum_{j=0}^N \int_{2^j}^{2^{j+1}} \frac{r^{n-1}dr}{r^n} \right) \geq C N$$
and
$$  \|\phi_N\|_{L^2(\R^n)}^2 = \sum_{j=0}^N \int_{2^j}^{2^{j+1}} \frac{r^{n-1}dr}{r^n}  \leq C' N. $$
Hence, from \eqref{eq.co2a3} we deduce
$$ C N^2 \leq  \|I_{n/2}(\phi)\|^2_{L^\infty(\R^n)} \leq C_1 \|\phi\|_{L^2(\R^n)}^2 \leq C_2 N ,$$
for any $N$ sufficiently big and this is impossible.

 This completes the proof of the Theorem.
\end{proof}

\section{Functional calculus kernels and their asymptotic expansions}

The functional calculus for the perturbed Hamiltonian $\mathcal{H}$ is based on the relations \eqref{eq.IFC1} and \eqref{eq.IFC1i}.
In the low energy domain we have the kernel expansion proposed in Lemma \ref{Ber1}. We shall prove this kernel estimate below.

\begin{proof}[Proof of Lemma \ref{Ber1}]
We assume $x<y$ for determinacy and consider three cases.
\begin{equation}\label{eq.case1}
    x < 0 < y , \tag{Case A}
\end{equation}
\begin{equation}\label{eq.case2}
    0 \leq x <  y,   \tag{Case B}
\end{equation}
\begin{equation}\label{eq.case3}
     x <  y \leq 0. \tag{Case C}
\end{equation}

In the \ref{eq.case1}, we can use
the representation
$$  T(\tau) m_+(y,\tau) m_-(x,\tau)= T(\tau) + T(\tau)\underbrace{ m^{rem,+}_0(y,\tau)}_{=a_1(y,\tau)}+ $$ $$ + T(\tau)\underbrace{  m^{rem,-}_0(x,\tau)}_{=a_2(x,\tau)} +  T(\tau)\underbrace{  m^{rem,+}_0(y,\tau)  m^{rem,-}_0(x,\tau)}_{= a_3(x,y,\tau)},$$
where
\begin{equation}\label{eq.Os2}
    m^{rem, \pm}_0(x,\tau) = m_\pm (x,\tau) -1.
\end{equation}
In this way, from \eqref{eq.IFC1}, we have the representation
  \begin{equation}\label{eq.remlowf}
  \varphi \left( \frac{\sqrt{\mathcal{H}}}{M} \right)(x,y) = c\ \widehat{\varphi_M}(x-y) + c\sum_{j=1}^3 I_{M} (a_j)(x,y),
  \end{equation}
where
\begin{alignat*}{2}
    I_{M} (a)(x,y) = M \int_{\rone} \varphi\left(\tau \right) T(M\tau) a(x,y,M\tau) e^{- \im M\tau(x-y)} d\tau
\end{alignat*}
and
$$ \varphi_M(\tau) = T(\tau) \varphi \left(\frac{\tau}{M} \right).$$

We can put the term $\widehat{\varphi_M}(x-y)$ in the leading term $K_M(x,y)$ defined in \eqref{eq.pasq10}, indeed, we have
\begin{equation*}
\widehat{\varphi_M}(x-y)= \mathds{1}_{x<0}\mathds{1}_{y>0}\int_{\rone}e^{-i\tau(x-y)}\varphi\left(\frac{\tau}{M}\right)T(\tau)\,d\tau.
\end{equation*}

To estimate the terms $I_M(a_j)(x,y)$ we are going to use the following fractional integration by parts estimate\footnote{here $g$ is a compactly supported function in $C^{0,\sigma}(\rone)$ such that $0 \notin {\rm supp} g$.}
\begin{equation}\label{eqIP1}
\left| \int_\rone e^{\im \tau M\xi} g (\tau) d\tau \right| \leq \frac{C}{\langle M\xi \rangle^{\sigma}} \| g \|_{C^{0,\sigma}(\rone)}, \ \ \forall \ \sigma \in (0,1).
\end{equation}
Hence we have
\begin{equation}\label{eq.Imaj}
|I_M(a_j)(x,y)|\leq C \frac{M}{\langle M(x-y)\rangle^\sigma}\left\|\varphi(\tau)\frac{T(M\tau)}{M\tau}M\tau a_j(x,y,M\tau)\right\|_{C^{0,\sigma}(\rone)}.
\end{equation}
Then, using the estimates proved in Lemma \ref{lem:Jostk0}, combined with the following estimates for $T(\tau)$
\begin{equation*}
\left\|\frac{T(\tau)}{\tau}\right\|_{C^{0,\sigma}(\rone)} +\left|\frac{T(\tau)}{\tau}\right|\leq C, \ \sigma\in(0,1)\cap(0,\gamma-1]
\end{equation*}
and with the fact that $\varphi \in C^\infty _0({\mathbf R} _+),$ such that ${\rm supp} \varphi \subseteq [1/2,2],$
we get
\begin{equation*}
\left\|\varphi(\tau)\frac{T(M\tau)}{M\tau}M\tau a_1(x,y,M\tau)\right\|_{C^{0,\sigma}(\rone)}\leq C\left(\frac{1}{\langle y \rangle^{\gamma}} +\frac{M^\sigma}{\langle y \rangle^{\gamma-\sigma}}\right),
\end{equation*}
\begin{equation*}
\left\|\varphi(\tau)\frac{T(M\tau)}{M\tau}M\tau a_2(x,y,M\tau)\right\|_{C^{0,\sigma}(\rone)}\leq C\left(\frac{1}{\langle x \rangle^{\gamma}} +\frac{M^\sigma}{\langle x \rangle^{\gamma-\sigma}}\right),
\end{equation*}
\begin{equation*}
\left\|\varphi(\tau)\frac{T(M\tau)}{M\tau}M\tau a_3(x,y,M\tau)\right\|_{C^{0,\sigma}(\rone)}\leq C\left(\frac{1}{\langle x \rangle^{\gamma-1}\langle y \rangle^{\gamma}} +\frac{1}{\langle y \rangle^{\gamma-\sigma}\langle x \rangle^{\gamma-1}}+\frac{M^{\sigma}}{\langle y \rangle^{\gamma}\langle x \rangle^{\gamma-\sigma-1}}\right).
\end{equation*}

Turning back to \eqref{eq.remlowf} and using the estimates \eqref{eq.Imaj} together with Holder estimates above, we obtain
\begin{equation*}
\left|\varphi\left(\frac{\sqrt{\mathcal{H}}}{M}\right)-c\widehat{\varphi_M}(x-y)\right|\leq C\frac{M}{\langle M(x-y)\rangle^\sigma}\left(\frac{1}{\langle y \rangle^{\gamma-\sigma}}+\frac{1}{\langle x \rangle^{\gamma-\sigma}}\right)
\end{equation*}
with $\sigma\in(0,1)\cap(0,\gamma-1]$, and $x<0<y$, i.e. we get \eqref{eq.Ber2} in the \eqref{eq.case1}.

In the \ref{eq.case2}, since we have $x\geq 0$, we want to write $m_-(x,\tau)$ in term of $m_+(x,\pm \tau)$. In order to do this, we can use the relation
\begin{equation}  \label{eq:Ber7} \begin{aligned} &    T(\tau )m_- (x ,\tau )= R_+ (\tau )e^{ 2\im \tau x }m_+ (x,\tau )+   m_+(x,-\tau ).
\end{aligned}
\end{equation}
Then we can  write
\begin{alignat*}{2}
   T(\tau) m_+(y,\tau) m_-(x,\tau) & =  \\
\left( R_+ (\tau ) + 1 \right) & e^{ 2\im \tau x } m_+(y,\tau) m_+ (x,\tau ) - \\
- & e^{ 2\im \tau x } m_+(y,\tau) m_+ (x,\tau ) + m_+(y,\tau) m_+(x,-\tau ).
\end{alignat*}
Using the remainders introduced in \eqref{eq.Os2} we can represent the kernel $ \varphi \left( \frac{\sqrt{\mathcal{H}}}{M} \right)(x,y) $ as a sum of kernels of three types:
\begin{alignat*}{2}
   & I_M(x,y) = \int_\rone \varphi\left( \frac{\tau}{M}\right) \left( \left( R_+ (\tau ) + 1 \right) e^{ 2\im \tau x } m_+(y,\tau) m_+ (x,\tau ) \right) e^{- \im \tau(x-y)} d\tau, \\ & II_M(x,y) =  M \widehat{\varphi} \left(M (x-y) \right)- M \widehat{\varphi} \left(M (x+y) \right)\\
    &III_M(x,y)=\sum_{j=1}^2 K_j(x,y;M),
\end{alignat*}
where
\begin{align}\label{eq.Ber2b}
  K_1(x,y;M)  = M \int_\R e^{-\im M \tau(x-y)} \varphi(\tau) b_1(x,y,M\tau) d\tau,
\end{align}
\begin{align}\label{eq.Ber2c}
  K_2(x,y;M)  =  -M \int_\R e^{\im M \tau(x+y)} \varphi(\tau) b_2(x,y, M\tau) d\tau,
\end{align}
and
\begin{equation*}
b_1(x,y,M\tau)=m_0^{rem,+}(y,M\tau)+m_0^{rem,+}(x,-M\tau)+m_0^{rem,+}(y,M\tau)m_0^{rem,+}(x,-M\tau),
\end{equation*}
\begin{equation*}
b_2(x,y,M\tau)=m_0^{rem,+}(y,M\tau)+m_0^{rem,+}(x,M\tau)+m_0^{rem,+}(y,M\tau)m_0^{rem,+}(x,M\tau).
\end{equation*}
As before, we firstly estimate the terms $I_M(x,y)$ and $III_M(x,y)$ with the fractional integration by parts estimate \eqref{eqIP1} and then we use Lemma \ref{lem:Jostk0} combined with the estimates
\begin{equation*}
\left\|\frac{R_\pm(\tau)+1}{\tau}\right\|_{C^{0,\sigma}(\mathbb{C}_\pm)}+\left|\frac{R_\pm(\tau)+1}{\tau}\right|\leq C, \ \sigma\in(0,1)\cap(0,\gamma-1],
\end{equation*}
and the properties of the function $\varphi$ to prove \eqref{eq.Ber2} in the \eqref{eq.case2}. Here
\begin{equation*}
b(x,y,\tau )= (R_+(\tau)+1) e^{2i\tau x}-e^{2i\tau x}+1
\end{equation*}
and $0\leq x<y$.

In the \ref{eq.case3} we follow the argument used in the \ref{eq.case2}, but this time we replace \eqref{eq:Ber7} by
\begin{equation}  \label{eq:Ber11} \begin{aligned} &    T(\tau )m_+ (y ,\tau )= R_- (\tau )e^{ -2\im \tau y }m_- (y,\tau )+   m_-(y,-\tau ),
\end{aligned}
\end{equation}
and we derive \eqref{eq.Ber2} using the argument of case \ref{eq.case2} .

This completes the proof of \eqref{eq.Ber2}.
\end{proof}

\begin{proof}[Proof of Lemma \ref{Ber1m}]In the high energy domain $M >1$ we can follow the proof of Lemma \ref{Ber1}. Using the estimates
$$ T(\tau) = 1 + O(\tau^{-1}), \ R(\tau) =O(\tau^{-1}) $$
near $ \tau \to \infty$, we can absorb the factor $M>1$ that appears in
\begin{gather*}
\varphi \left( \frac{\sqrt{\mathcal{H}}}{M} \right)(x,y) -\varphi \left(\frac{\sqrt{\mathcal{H}_0}}{M} \right)(x,y)=\\
=M\int_{\rone}\varphi(\tau)\left[T(\tau M)m_+(y,\tau M)m_-(x,\tau M)-1\right]e^{-i\tau M(x-y)}\,d\tau.
\end{gather*}
Then, proceeding as in the proof of Lemma \ref{Ber1} we  obtain the following estimate
\begin{align*}
   \left| \varphi \left( \frac{\sqrt{\mathcal{H}}}{M} \right)(x,y) -\varphi \left(\frac{\sqrt{\mathcal{H}_0}}{M} \right)(x,y)\right| \leq  \\ \nonumber \leq C  \left(\sum_{\pm} \frac{1}{\langle M(x\pm y) \rangle^{ \sigma}}\right)\left(\frac{1}{\langle x \rangle^{\gamma-\sigma}} + \frac{1}{\langle y \rangle^{\gamma-\sigma}}\right).
\end{align*}
i.e. the inequality \eqref{eq.Ber2m}.
\end{proof}

\section{ Equivalence of homogeneous Besov norms }

The comparison of homogeneous Besov spaces $\dot{B}^s_{p}(\rone)$ and $\dot{B}^s_{p,\mathcal{H}}(\rone)$
is closely connected with the definition and properties of fractional power of the Hamiltonians $\mathcal{H}$ and $\mathcal{H}_0.$

For  sectorial operators $A$ in $L^p(\rone) $ with spectrum $\sigma(A)$ satisfying
 $$ z \in \sigma(A) \setminus \{0\} \Longrightarrow {\rm Re}  z \geq c|{\rm Im}  z|, \  c>0$$
 we can define  for any $\sigma \in (0,1)$ the fractional  negative powers of $A$ as follows
  (see Theorem 1.4.2 in \cite{H})
\begin{equation}\label{eq.RRH1n}
 A^{-\sigma} = \frac{\sin (\pi \sigma)}{\pi} \int_0^\infty \lambda^{-\sigma } (\lambda +  A)^{-1} d\lambda.
\end{equation}
The above relation  suggests to introduce the fractional powers $\mathcal{H}^{s/2}$ for $s \in [0,1)$  by the aid of the relation
\begin{equation}\label{eq.RRH1}
   \mathcal{H}^{s/2} = \mathcal{H}^{-\sigma}\mathcal{H} = \frac{\sin (\pi \sigma)}{\pi} \int_0^\infty \lambda^{-\sigma } \mathcal{H} (\lambda +  \mathcal{H})^{-1} d\lambda
\end{equation}
with $\sigma=1-s/2.$

Sectorial properties of $\mathcal{H}$ are studied in \cite{GG} under the assumption that $0$ is not resonance for $\mathcal{H}.$ The convergence of the integral in \eqref{eq.RRH1} near $\theta=0$  needs justification based on limiting absorption type estimates, obtained in  \cite{GG} in the case $0$ is not a resonance point for $\mathcal{H}$.

The existence of the wave operators $W_\pm$ is well known according to the results in
\cite{W0}, \cite{AY}, \cite{DF06}, so $W_\pm$ are well  defined operators in $L^p(\rone),$ $1<p<\infty.$

The splitting property
$$ \mathcal{H }W_\pm = W_\pm \mathcal{H}_0$$
implies
that
$$ W_\pm :  \dot{B}^s_p(\rone) \to \dot{B}^s_{p,\mathcal{H}}(\rone), \ \forall s \geq 0, \ 1 < p < \infty.$$

The functional calculus for the perturbed operator $\mathcal{H}$
can be defined as follows
$$ g(\mathcal{H}) = W_+ g(\mathcal{H}_0)W_+^*= W_- g(\mathcal{H}_0)W_-^*$$
for any function $g \in L^\infty_{loc}(\rone).$

 Using the existence of the wave operators, its boundness in $L^p(\rone)$ and the splitting property, one can derive the following $L^p$ estimate (partial case of Bernstein inequality)
 \begin{equation}\label{eq.EB1}
  \left\| \varphi \left(
\frac{ \sqrt{\mathcal{H}}}{M} \right) f\right\|_{L^p(\rone)} \leq C  \|f\|_{L^p(\rone)}
 \end{equation}
 for $M>0$ and $f \in S(\rone).$

For completeness we can also mention that from  Lemma \ref{Ber1m} combined with Young convolution inequality we get the Bernstain inequality for $M>1$
 \begin{equation}\label{eq.bern}
 \left\| \varphi \left(
 \frac{ \sqrt{\mathcal{H}}}{M} \right) f-\varphi \left(
 \frac{ \sqrt{\mathcal{H}_0}}{M} \right)f\right\|_{L^p(\rone)} \leq C M^{1/p-1/q-\delta} \|f\|_{L^p(\rone)},
 \end{equation}
 where $1\leq p\leq q \leq \infty$ and $\delta>0$, $\delta= s-\sigma$ according with the notations used in Lemma \ref{Ber1m}.

 For the low energy domain, $0<M\leq 1$, we need the following estimate.

\begin{lem} \label{6.1PL} Assume that $V \in L^1_\gamma(\rone)$ with
$$ \gamma >1+1/p, \  0 <s < \frac{1}{p}, \ 1 < p < \infty.$$
Then for any even function $\varphi(\tau) \in C_0^\infty(\rone \setminus 0)$
 there exists a constant
$C =C(\| V \| _{L^{1}_{\gamma}(\rone)})$ so that for any pair of real positive  numbers $\Lambda$, $M $ such that $ 0 < \Lambda  \leq M$, $M  \leq 1$ and
 for any  $f\in \mathcal{S}( \mathbb{{R}}),$
  the following inequality holds:
\begin{equation}\label{6.3}
\left\| \varphi \left(\frac{ \sqrt{\mathcal{H}}}{M} \right)  \varphi \left(\frac{ \sqrt{\mathcal{H}_0}}{\Lambda} \right) f \right\|_{L^p_x(\rone)}
\leq C  \frac{\Lambda^{s}}{M^{s}} \|f\|_{L^p_x(\rone)}.
\end{equation}
\end{lem}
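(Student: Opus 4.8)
The bound \eqref{6.3} is the low-energy instance of the almost-orthogonality estimate \eqref{eq.III5}: the target is $\bigl\|\varphi(\sqrt{\mathcal H}/M)\,\varphi(\sqrt{\mathcal H_0}/\Lambda)\bigr\|_{L^{p}\to L^{p}}\lesssim (\Lambda/M)^{s}$ for $0<\Lambda\le M\le1$. First I would dispose of comparable scales: if $M<4\Lambda$ then $(\Lambda/M)^{s}\gtrsim1$, so it suffices to prove the uniform bound $\|\varphi(\sqrt{\mathcal H}/M)\varphi(\sqrt{\mathcal H_0}/\Lambda)\|_{L^{p}\to L^{p}}\lesssim1$, which is immediate from the Bernstein-type estimate \eqref{eq.EB1} for $\mathcal H$ together with the $L^{p}$-boundedness of the free Littlewood–Paley projection (a consequence of \eqref{eq.If1}). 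Hence I may and do assume $M\ge4\Lambda$, i.e.\ that $\varphi(\cdot/M)$ and $\varphi(\cdot/\Lambda)$ have disjoint supports.

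The algebraic heart of the argument is the spectral orthogonality $\varphi(\sqrt{\mathcal H}/M)\varphi(\sqrt{\mathcal H}/\Lambda)=0$ (functional calculus applied to $\varphi(\tau/M)\varphi(\tau/\Lambda)\equiv0$), which turns the problem into an estimate for the \emph{difference} of the two scale-$\Lambda$ localizations:
\[
 \varphi\!\Bigl(\tfrac{\sqrt{\mathcal H}}{M}\Bigr)\varphi\!\Bigl(\tfrac{\sqrt{\mathcal H_0}}{\Lambda}\Bigr)
 =\varphi\!\Bigl(\tfrac{\sqrt{\mathcal H}}{M}\Bigr)\Bigl[\varphi\!\Bigl(\tfrac{\sqrt{\mathcal H_0}}{\Lambda}\Bigr)-\varphi\!\Bigl(\tfrac{\sqrt{\mathcal H}}{\Lambda}\Bigr)\Bigr].
\]
I would then feed in the low-energy kernel expansion of Lemma \ref{Ber1} at scale $\Lambda$, $\varphi(\sqrt{\mathcal H}/\Lambda)=K_{\Lambda}+\mathcal R_{\Lambda}$ with $\mathcal R_{\Lambda}$ obeying \eqref{eq.Ber2}, and likewise $\varphi(\sqrt{\mathcal H}/M)=K_{M}+\mathcal R_{M}$ for the outer factor. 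Using the explicit form of $b(x,y,\tau)$ and the free kernel identity $\varphi(\sqrt{\mathcal H_0}/\Lambda)(x,y)=c\,\Lambda\,\widehat\varphi(\Lambda(x-y))$, every resulting piece is of one of two types: (a) a \emph{free-type} piece — a one-sided truncation and/or reflection of the free Littlewood–Paley operator at scale $\Lambda$ or $M$ (schematically $\mathds{1}_{\{\pm x>0\}}\,[\text{reflected }\varphi(\sqrt{\mathcal H_0}/\Lambda)]\,\mathds{1}_{\{\pm y>0\}}$); or (b) a \emph{perturbative} piece carrying an extra factor $\Lambda$ (resp.\ $M$) and the Hölder/weight decay of \eqref{eq.Ber2} — the $T(\tau)$ and $R_{\pm}(\tau)+1$ contributions.

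The gain $(\Lambda/M)^{s}$ comes from two distinct mechanisms. For the free-type pieces it is frequency mismatch: composing the scale-$M$ free localization with a truncated scale-$\Lambda$ object $\mathds{1}_{\{z<0\}}g$, where $g$ has frequency $\sim\Lambda$, one writes $\mathds{1}_{\{z<0\}}g=g-\mathds{1}_{\{z>0\}}g$, uses $\varphi(\sqrt{\mathcal H_0}/M)g=0$, peels off the jump $g(0)\mathds{1}_{\{z>0\}}$ and, using $\int\widehat\varphi=0$, arrives at a kernel of the form $\Psi(M\,\cdot)$ with $\Psi$ Schwartz, whence $\|\varphi(\sqrt{\mathcal H_0}/M)(\mathds{1}_{\{z<0\}}g)\|_{L^{p}}\lesssim M^{-1/p}\|g\|_{L^{\infty}}+(\text{lower order})\lesssim (\Lambda/M)^{1/p}\|g\|_{L^{p}}$ by Bernstein's $L^{p}\!\to\!L^{\infty}$ inequality at scale $\Lambda$; since $s<1/p$ and $\Lambda/M\le1$ this dominates $(\Lambda/M)^{s}$. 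The restriction $s<1/p$ enters precisely here, in agreement with the obstruction in Theorem \ref{l.co1}. For the perturbative pieces the extra factor of $\Lambda$, together with the weights $\langle x\rangle^{-(\gamma-\sigma)},\langle y\rangle^{-(\gamma-\sigma)}$ (with $\gamma-\sigma>1$) and the localization $\langle\Lambda(x\pm y)\rangle^{-\sigma}$ of \eqref{eq.Ber2}, is converted into the power $(\Lambda/M)^{s}$ via the factorization $\varphi(\sqrt{\mathcal H_0}/\Lambda)=\Lambda^{s}\,\mathcal H_0^{-s/2}\,\psi(\sqrt{\mathcal H_0}/\Lambda)$ with $\psi(\tau)=\tau^{s}\varphi(\tau)\in C_0^{\infty}(\rone\setminus0)$ (so $\|\psi(\sqrt{\mathcal H_0}/\Lambda)\|_{L^{p}\to L^{p}}\lesssim1$), combined with Young's and Hölder's inequalities and the Hardy–Littlewood–Sobolev estimate for the Riesz potential $\mathcal H_0^{-s/2}$; when $x$ and $y$ have opposite signs one also trades a ``wrong-variable'' weight for a useful localization factor using $|x-y|=|x|+|y|\ge\max(|x|,|y|)$.

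The main obstacle I expect is the bookkeeping of the perturbative family: one must simultaneously track the two localization scales $\Lambda\le M$, the polynomial weights, and the Hölder exponent $\sigma$, and verify that $\sigma$ can be chosen so that all the resulting Young/Hölder exponents are admissible. This is exactly where the full strength of the hypotheses is used, in the form of the nonempty admissible range $\sigma\in(1/p,\gamma-1]$ — available because $\gamma>1+1/p$ — together with $0<s<1/p$. With $\sigma$ fixed in this range, each elementary term is bounded by $C(\Lambda/M)^{s}$ (by a strictly larger power in all but the borderline cases), and summing the finitely many nonzero contributions gives \eqref{6.3}.
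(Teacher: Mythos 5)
Your overall architecture tracks the paper's quite closely: the reduction to well-separated scales via the uniform $L^p$ bounds, the use of the low-energy expansion of Lemma \ref{Ber1}, the treatment of the free-type leading kernels by frequency separation (your ``peel off the jump'' computation is morally identical to the paper's explicit evaluation of the composed kernel $\tilde K_{M,\Lambda}$, where the one-sided truncations produce the factor $1/(\tau-\xi)$ with $|\tau-\xi|\gtrsim M$, yielding $(\Lambda/M)^{1/p}$ and hence the restriction $s<1/p$), and the treatment of the weighted remainders by Lorentz-space Young/H\"older combined with a Sobolev/Bernstein gain of $\Lambda^s$.

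However, the step you call the ``algebraic heart'' --- rewriting the operator as $\varphi(\sqrt{\mathcal H}/M)\bigl[\varphi(\sqrt{\mathcal H_0}/\Lambda)-\varphi(\sqrt{\mathcal H}/\Lambda)\bigr]$ via the spectral orthogonality --- introduces a term your listed mechanisms do not control: $\varphi(\sqrt{\mathcal H}/M)\circ\mathcal R_\Lambda$, where $\mathcal R_\Lambda=\varphi(\sqrt{\mathcal H}/\Lambda)-K_\Lambda$ is the remainder of Lemma \ref{Ber1} \emph{at scale $\Lambda$}. In this term the weighted kernel of \eqref{eq.Ber2} acts on the raw input $f$, not on $f_\Lambda=\varphi(\sqrt{\mathcal H_0}/\Lambda)f$. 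The entire source of the $\Lambda^{s}$ gain for the perturbative pieces is that the weighted kernel is applied to a function that is free-frequency-localized at scale $\Lambda$, so that the Sobolev embedding \eqref{eq.Y6} gives $\|f_\Lambda\|_{L^{q,p}}\lesssim \Lambda^{s}\|f\|_{L^p}$ with $1/q=1/p-s$, which is exactly the extra $1/p-s$ of integrability needed to close the exponent identity \eqref{eq.Y5}. Your own stated mechanism (the factorization $\varphi(\sqrt{\mathcal H_0}/\Lambda)=\Lambda^{s}\mathcal H_0^{-s/2}\psi(\sqrt{\mathcal H_0}/\Lambda)$ plus Hardy--Littlewood--Sobolev) cannot be invoked here, because after your reduction $\varphi(\sqrt{\mathcal H_0}/\Lambda)$ is no longer adjacent to the weighted kernel. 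Bounding $\|\mathcal R_\Lambda\|_{L^p\to L^p}$ directly from \eqref{eq.Ber2} yields only powers such as $\Lambda^{1-\sigma}$ or $\Lambda^{1/p}$ under exponent constraints (e.g.\ $\sigma>1/p$ for the $\langle y\rangle$-weighted piece together with $1-\sigma\geq s$) that cannot all be met throughout the allowed range of $(p,s,\gamma)$; the analogous composition of $\mathcal R_M$ with the $O(1)$ ``free-type'' part of $\varphi(\sqrt{\mathcal H_0}/\Lambda)-K_\Lambda$ has the same defect, producing a power of $M$ but no power of $\Lambda/M$. The repair is to drop the orthogonality reduction and expand only the outer factor, $\varphi(\sqrt{\mathcal H}/M)=K_M+\mathcal R_M$, keeping $\varphi(\sqrt{\mathcal H_0}/\Lambda)$ intact on the right: then $\mathcal R_M$ acts on $f_\Lambda$ and the Lorentz/Sobolev bookkeeping closes, while $K_M\circ\varphi(\sqrt{\mathcal H_0}/\Lambda)$ is handled by your frequency-mismatch computation. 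That is precisely the paper's proof.
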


\begin{proof}

We can assume that the support of $\varphi $ is in $[1/2,2].$

We note that if $ \Lambda/4 \leq M \leq 4 \Lambda,$ or $M/4\leq \Lambda\leq 4 M$ then we can use the fact that $\varphi(\sqrt{\mathcal{H}}/M) $ and $\varphi(\sqrt{\mathcal{H}_0}/\Lambda) $ are $L^p$ bounded operators, so the estimates \eqref{6.3} is obvious in this case.

Since if $V\in L^1_\gamma(\rone)$ with $\gamma>1+1/p$ then $V\in L^1_{1+s}(\rone)$ for any $s \in [0,1/p)$.

Our first step is the  proof of \eqref{6.3}, assuming
\begin{equation}\label{eq.pasq1}
    \Lambda < M/4.
\end{equation}
Our goal is to check the inequality
\begin{equation}\label{eq.BIi1}
    \left\| \int_{\R } \varphi \left(\frac{ \sqrt{\mathcal{H}}}{M} \right) (x,y) f_\Lambda(y)  dy  \right\|_{L^p(\rone)} \leq
    C\left(\frac{\Lambda}{M}\right)^{s}   \|f\|_{L^p(\rone)}
\end{equation}
where $f_\Lambda=\varphi(\sqrt{\mathcal{H}_0}/\Lambda)f$.

We can apply the  kernel estimate \eqref{eq.Ber2} from Lemma \ref{Ber1} so we get
\begin{align}\label{EBN1}
  & \left\| \int_{ \R } \left[ \varphi \left(\frac{ \sqrt{\mathcal{H}}}{M} \right) (x,y)- K_M(x,y) \right] f_\Lambda(y)  dy  \right\|_{L^p(\rone)} \leq \nonumber\\
\leq & C M  \left\| \int_{ \R } \frac{|f_\Lambda(y)|   dy}{\langle M(x-y)\rangle^{\sigma} \langle x\rangle^{1+s-\sigma}} \right\|_{L^p(\rone)}  +  C M\left\| \int_{ \rone } \frac{|f_\Lambda(y)|   dy}{\langle M(x-y)\rangle^{\sigma} \langle y\rangle^{1+s-\sigma}} \right\|_{L^p(\rone)} ,
\end{align}
for any $\sigma \in (0,s].$

The terms in the right side of the inequality above can be evaluated using Hardy-Sobolev estimates. To be more precise, the equivalence between the Lebesgue spaces $L^p(\rone)$ and the Lorentz ones $L^{p,p}(\rone)$ in the case $1<p<\infty$ allows us to use the sharp inequalities in Lorentz spaces. Indeed, we recall that  $|x|^{-1/\beta}\in L^{\beta,\infty}(\rone)$, for any $\beta\geq 1$. Hence, using the relation
\begin{equation}\label{eq.Y5}
1+\frac{1}{p}= \sigma +(1+s-\sigma) + \left(\frac{1}{p}-s\right),
\end{equation}
 we are in position to apply Young and H\"older inequalities in Lorentz spaces to get
\begin{equation*}\label{eq.Y2}
   \left\| \int_{ \rone } \frac{|f_\Lambda(y)|   dy}{|M(x-y)|^{ \sigma} |y|^{1+s-\sigma}} \right\|_{L^{p,p}(\rone)}\leq C \frac{1}{M^{\sigma}} \left\| \frac{1}{| x |^{\sigma}}\right\|_{L^{\frac{1}{\sigma},\infty}(\rone)}\left\|\frac{1}{|y|^{1+s-\sigma}} \right\|_{L^{\frac{1}{1+s-\sigma},\infty}(\rone)}\|f_\Lambda\|_{L^{q,p}(\rone)} .
\end{equation*}
 where
\begin{equation*}
\frac{1}{q}=\frac{1}{p}-s.
\end{equation*}

This estimate can be combined with the Sobolev embedding in Lorentz spaces
\begin{equation}\label{eq.Y6}
  \|f\|_{L^{q,p}(\rone)} \leq C \|D^s f\|_{L^{p,p}(\rone)} , \ \frac{1}{q}= \frac{1}{p} - s, \ 0<s<1/p,
\end{equation}
so that we obtain that the second term in the right side in \eqref{EBN1} is bounded from
\begin{equation*}
C M^{1-\sigma}\Lambda^s \|f\|_{L^p(\rone)}\leq C\Lambda^s\|f\|_{L^p(\rone)}.
\end{equation*}

One can proceed similarly to find
\begin{equation*}
 \left\| \int_{ \R } \frac{|f_\Lambda(y)|   dy}{|M(x-y)|^{\sigma}| x|^{1+s-\sigma}} \right\|_{L^p(\rone)}\leq C\frac{1}{M^{\sigma}} \Lambda^s\|f\|_{L^p(\rone)}.
\end{equation*}
Hence we have proved that in the case $0<\Lambda<M\leq 1$
\begin{gather}\label{EBN2}
M \left\| \int_{ \R } \frac{|f_\Lambda(y)|   dy}{|M(x-y)|^{\sigma} |x|^{1+s-\sigma}} \right\|_{L^p(\rone)}  +  M\left\| \int_{ \rone } \frac{|f_\Lambda(y)|   dy}{|M(x-y)|^{ \sigma} | y|^{1+s-\sigma}} \right\|_{L^p(\rone)}\leq \\
 \leq  C \frac{M}{M^{\sigma}}\Lambda^s \left\| f\right\|_{L^p(\rone)},\nonumber
\end{gather}
with $0<\sigma\leq s$. So we have established the \eqref{6.3}.

Now we need to estimate the leading terms
\begin{equation}\label{eq.lead}
A_{M,\Lambda}(f)(x)=\int_\rone K_M(x,y) f_\Lambda(y) dy,
\end{equation}
caracterized in \eqref{eq.pasq10}.
We start with the study of the kernel
\begin{equation}\label{eq.pasq12}
K_M(x,y)= \mathds{1}_{ x>0}\mathds{1}_{ y>0} \int e^{-\im \tau(x-y)} \varphi\left(\frac{\tau}{M}\right)  \,d\tau.
\end{equation}
At first we look for the kernel $\tilde{K}_{M,\Lambda}(x,y)$, such that
\begin{equation*}
A_{M,\Lambda}(f)(x)=\int \tilde{K}_{M,\Lambda}(x,y) f(y)\,dy
\end{equation*}
and then we will find suitable bounds for $|\tilde{K}_{M,\Lambda}(x,y)|$ in order to estimate $\|A_{M,\Lambda}(f)\|_{L^p(\rone)}$. We can neglect the characteristic function $\mathds{1}_{ x>0}$. On the other side, the presence of $ \mathds{1}_{ y>0}$ and the integration in $dy$ imply that
$$ A_{M,\Lambda}(f)(x)= \iint  e^{-\im \tau x} \varphi\left(\frac{\tau}{M}\right)\frac{1}{\tau-\xi}\varphi\left(\frac{\xi}{\Lambda}\right)\hat{f}(\xi)\,d\xi\,d\tau.$$
We note that $\tau-\xi\neq 0$ since we are considering the case $\Lambda <M/4$.

By the definition of Fourier transform
\begin{equation*}
\hat{f}(\xi)=\int e^{-iy\xi}f(y)\,dy,
\end{equation*}
we get the expression of the kernel
\begin{equation}\label{eq.KMLest}
\tilde{K}_{M,\Lambda}(x,y)= \iint  e^{-\im \tau x}e^{-iy\xi} \varphi\left(\frac{\tau}{M}\right)\varphi\left(\frac{\xi}{\Lambda}\right)\frac{1}{\tau-\xi}\,d\xi\,d\tau.
\end{equation}
Operating the change of variables $\tau\mapsto M\tau$ and $\xi\mapsto \Lambda \xi$ we obtain
\begin{equation*}
\tilde{K}_{M,\Lambda}(x,y)= M\Lambda\iint  e^{-\im \tau M x}e^{-iy\Lambda\xi} \varphi\left(\tau\right)\varphi\left(\xi\right)\frac{1}{M\tau-\Lambda\xi}\,d\xi\,d\tau.
\end{equation*}
Integrating two times by parts in $\tau$ and then in $\xi$ we find the following estimate
\begin{align}\label{eq.leadest}
|\tilde{K}_{M,\Lambda}(x,y)|& \leq  \frac{M\Lambda}{\langle Mx\rangle^2\langle \Lambda y\rangle^2}\iint   \left|\partial_\xi^2\partial_\tau^2\left(\frac{\varphi\left(\tau\right)\varphi\left(\xi\right)}{M\tau-\Lambda\xi}\right)\right|\,d\xi\,d\tau\\
&\leq C\frac{\Lambda M}{\langle Mx\rangle^2\langle \Lambda y\rangle^2}\frac{1}{\max{(M,\Lambda)}}.\nonumber
\end{align}
Now we can apply H\"older inequality to get
\begin{equation}\label{eq.hold}
\|A_{M,\Lambda}(f)\|_{L^p(\rone)}\leq \frac{\Lambda}{M^{1/p}\Lambda^{1-1/p}}\|f\|_{L^p(\rone)}.
\end{equation}
We can proceed similarly for the kernels
\begin{equation}\label{eq.pasq12}
K_M(x,y)= \mathds{1}_{ x<0}\mathds{1}_{ y>0} \int e^{-\im \tau(x-y)} \varphi\left(\frac{\tau}{M}\right) T(\tau) \,d\tau
\end{equation}
and
\begin{equation}\label{eq.pasq12}
K_M(x,y)= \mathds{1}_{ \pm x>0}\mathds{1}_{ \pm y>0} \int e^{-\im \tau(x\pm y)} \varphi\left(\frac{\tau}{M}\right) (R_\pm(\tau)+1)\,d\tau.
\end{equation}
using the assumption $T(\tau) \sim \tau $, $(R_\pm(\tau)+1)\sim \tau$ near $\tau=0$ and fractional integration by parts.

Indeed, from the Theorem 2.3 in \cite{W} we have that $T(\tau)$ is $C^1(\rone)$ and $R_\pm(\tau)\in C^{0,\alpha}(\rone)$ with $\alpha <\gamma-1$. Applying $\alpha$ integration by parts we have that
\begin{equation*}
|\tilde{K}_{M,\Lambda}(x,y)|\leq C\frac{\Lambda M}{\langle Mx \rangle^\alpha\langle \Lambda y \rangle^\alpha  }\frac{1}{\max(M,\Lambda)}
\end{equation*}
and
\begin{equation*}
\|A_{M,\Lambda}(f)\|_{L^p(\rone)}\leq C \frac{\Lambda}{M^{1/p}\Lambda^{1-1/p}}\|f\|_{L^p(\rone)}
\end{equation*}
where we have chosen $\alpha>1/p$ thanks to the hypothesis $\gamma>1+1/p$.

In conclusion the estimate \eqref{6.3} is checked and it holds whenever $0 < \Lambda \leq M \leq 1.$
\end{proof}

Our proof of the equivalence of the high energy part of the homogeneous Besov norms \eqref{eq.IMi} for the perturbed Hamiltonian and the corresponding unperturbed homogeneous Besov norms is based also on the estimate of the operator
$$  \varphi \left(\frac{ \sqrt{\mathcal{H}}}{M} \right)  \varphi \left(\frac{ \sqrt{\mathcal{H}_0}}{\Lambda} \right).$$

More precisely, we have the following estimates.

\begin{lem} \label{l.ebn1} Assume that $V \in L^1_\gamma(\rone)$, $\gamma> 1+1/p$, the operator $\mathcal{H}$ has no point spectrum and resonance at zero.
Then for any even function $\varphi(\tau) \in C_0^\infty(\rone \setminus 0)$
 there exists a constant
$C =C(\| V \| _{L^{1}_{\gamma}(\rone)})$ so that for any pair of real positive  numbers $\Lambda$, $M $ and
 for any  $f\in \mathcal{S}( \mathbb{{R}}),$
  the following inequalities hold:
\begin{equation}\label{6.3m}
\left\| \varphi \left(\frac{ \sqrt{\mathcal{H}}}{M} \right)  \varphi \left(\frac{ \sqrt{\mathcal{H}_0}}{\Lambda} \right) f \right\|_{L^p_x(\rone)}
\leq C  \left(\frac{\Lambda}{M}\right)^{1/p} \|f\|_{L^p_x(\rone)}, \  \forall \  0 < \Lambda  \leq M, \ M  \geq 1
\end{equation}
and
\begin{equation}\label{6.4m}
\left\| \varphi \left(\frac{ \sqrt{\mathcal{H}}}{M} \right)  \varphi \left(\frac{ \sqrt{\mathcal{H}_0}}{\Lambda} \right) f \right\|_{L^p_x(\rone)}
\leq C  \left(\frac{M}{\Lambda}\right)^{1/p}  \|f\|_{L^p_x(\rone)},  \ \forall \  \Lambda \geq M, \ M \geq 1,
\end{equation}
with $1<p<\infty$.
\end{lem}
\begin{proof}
We shall prove first \eqref{6.3m}.
Take  $p \in (1,\infty)$ and $f,g \in S(\R).$
Set $$f_\Lambda(x)= \varphi \left(\frac{ \sqrt{\mathcal{H}_0}}{\Lambda} \right) f.$$
We use the relation
$$ \varphi \left(\frac{ \sqrt{\mathcal{H}}}{M} \right)  f_\Lambda=
 M^{-s}\varphi_1 \left(\frac{ \sqrt{\mathcal{H}}}{M} \right)  \mathcal{H}_0^{s/2} f_\Lambda +  M^{-s}\varphi_1 \left(\frac{ \sqrt{\mathcal{H}}}{M} \right) \left(\mathcal{H}^{s/2} - \mathcal{H}_0^{s/2} \right) f_\Lambda, $$
where $s>0$ will be chosen later on and
\begin{equation}\label{eq.PL10m}
    \varphi_1 \left(\tau \right) = \varphi \left(\tau \right) \tau^{-s}.
\end{equation}

Hence we have the representation formula
\begin{alignat}{2}\label{eq.PL10a1m}
   \varphi \left(\frac{ \sqrt{\mathcal{H}}}{M} \right)  f_\Lambda
=M^{-s}\varphi_1 \left(\frac{ \sqrt{\mathcal{H}}}{M} \right)&  (\mathcal{H}_0)^{s/2} f_\Lambda + G_{M,\Lambda}(f),
\end{alignat}
where
\begin{equation}\label{eq.PL10a2m}
   G_{M,\Lambda} = M^{-s}\varphi_1 \left(\frac{ \sqrt{\mathcal{H}}}{M} \right) \left(\mathcal{H}^{s/2} - \mathcal{H}_0^{s/2} \right)\varphi \left(\frac{ \sqrt{\mathcal{H}_0}}{\Lambda} \right).
\end{equation}

By  \eqref{eq.EB1}, we can write
$$ \left\| M^{-s}\varphi_1 \left(\frac{ \sqrt{\mathcal{H}}}{M} \right)  \mathcal{H}_0^{s/2} f_\Lambda    \right\|_{L^p(\rone)} \leq \frac{C}{M^s}
\left\|  \mathcal{H}_0^{s/2} f_\Lambda    \right\|_{L^p(\rone)} \leq \frac{C\Lambda^s}{M^s} \|f\|_{L^p(\rone)},
$$
so we have the estimate
\begin{equation}\label{eq.PL15m}
     \left\| M^{-s}\varphi_1 \left(\frac{ \sqrt{\mathcal{H}}}{M} \right) \mathcal{H}_0^{s/2} f_\Lambda    \right\|_{L^p(\rone)} \leq
\frac{C\Lambda^{s}}{M^{s}} \|f\|_{L^p(\rone)}.
\end{equation}

The operator $ \mathcal{H}^{s/2}-\mathcal{H}_0^{s/2}, $  entering  in the right side of \eqref{eq.PL10a2m} can be substituted by
$$
   C \int_0^\infty \lambda^{-1+s/2 } \left[\mathcal{H} (\lambda +  \mathcal{H})^{-1} - \mathcal{H}_0 (\lambda +  \mathcal{H}_0)^{-1} \right] d\lambda =
$$ \
$$ = C \int_0^\infty \lambda^{s/2 } (\lambda +\mathcal{H})^{-1} V (\lambda+ \mathcal{H}_0)^{-1}  d\lambda$$
due to \eqref{eq.RRH1}. Hence
\begin{equation}\label{eq.PL22}
   \|G_{M,\Lambda}(f)\|_{L^p(\rone)} \leq  \frac{C}{M^{s}}\int_0^\infty \lambda^{s/2} h(\lambda,\Lambda,M) d\lambda,
\end{equation}
where
$$h(\lambda, \Lambda,M) =   \left\|\varphi_1 \left(\frac{ \sqrt{\mathcal{H}}}{M} \right)  (\lambda+ \mathcal{H})^{-1} V (\lambda + \mathcal{H}_0)^{-1}  f_\Lambda \right\|_{L^p(\rone)}.$$

By \eqref{eq.bern} and the standard
estimate
\begin{equation}\label{eq.C3}
    \left\| \varphi \left( \frac{\sqrt{\mathcal{H}_0}}{\Lambda}\right) (\lambda +\mathcal{H}_0)^{-1} f \right\|_{L^q(\rone)} \leq \frac{C \Lambda^{1/p-1/q}}{\lambda + \Lambda^2} \|f\|_{L^p(\rone)},
\end{equation}

 we can write
$$  h(\lambda, \Lambda,M)  = \left\|\varphi_1 \left(\frac{ \sqrt{\mathcal{H}}}{M} \right)  (\lambda +\mathcal{H})^{-1} V (\lambda + \mathcal{H}_0)^{-1}  f_\Lambda \right\|_{L^p(\rone)} \leq $$ $$ \leq  \frac{C M^{1-1/p}}{\lambda + M^2} \left\| V (\lambda + \mathcal{H}_0)^{-1}  f_\Lambda \right\|_{L^1(\rone)} \leq$$
$$\leq   \frac{C M^{1-1/p}}{\lambda + M^2} \left\| (\lambda + \mathcal{H}_0)^{-1}  f_\Lambda \right\|_{L^\infty(\rone)} \leq  \frac{C M^{1-1/p}\Lambda^{1/p}}{(\lambda + M^2)(\lambda +\Lambda^2)}  \left\|f \right\|_{L^p(\rone)}$$
so we derive from \eqref{eq.PL22} the inequality
\begin{equation}\label{eq.PL24m}
   \|G_{M,\Lambda}(f)\|_{L^p(\rone)} \leq  C M^{1-1/p-s}\Lambda^{1/p}\int_0^\infty \frac{ \lambda^{s/2} d\lambda}{(\lambda + M^2)(\lambda +\Lambda^2)} \left\|f \right\|_{L^p(\rone)}.
\end{equation}
Now we can use the inequalities
\begin{equation}\label{eq.PL24as}
  \int_0^\infty \frac{ \lambda^{s/2} d\lambda}{(\lambda + M^2)(\lambda +\Lambda^2)} \leq   \int_0^\infty \frac{ \lambda^{s/2 -1} d\lambda}{(\lambda + M^2)} = C M^{s-2}
\end{equation}
and via \eqref{eq.PL24m} we find
\begin{equation}\label{eq.PL25}
   \|G_{M,\Lambda}(f)\|_{L^p(\rone)} \leq  C M^{-1-1/p}\Lambda^{1/p} \left\|f \right\|_{L^p(\rone)}.
\end{equation}
From this estimate, $M\geq 1$, the identity \eqref{eq.PL10a1m} and the inequality \eqref{eq.PL15m}, we see that taking $s=1/p$, we obtain
$$ \left\|\varphi \left(\frac{ \sqrt{\mathcal{H}}}{M} \right)  f_\Lambda \right\|_{L^p(\rone)} \leq \frac{C \Lambda^{1/p}}{M^{1/p}} \left\|f_\Lambda \right\|_{L^p(\rone)}
+ \frac{C \Lambda^{1/p}}{M^{1+ 1/p}} \left\|f \right\|_{L^p(\rone)} \leq  \frac{C \Lambda^{1/p}}{M^{1/p}} \left\|f \right\|_{L^p(\rone)}.$$

Similarly we can prove the case $\Lambda\geq M$, $M\geq 1$. Indeed we can use the relation
$$ \varphi \left(\frac{ \sqrt{\mathcal{H}}}{M} \right)  f_\Lambda=
M^{s}\varphi_1 \left(\frac{ \sqrt{\mathcal{H}}}{M} \right)  \mathcal{H}_0^{-s/2} f_\Lambda +  M^{s}\varphi_1 \left(\frac{ \sqrt{\mathcal{H}}}{M} \right) \left(\mathcal{H}^{-s/2} - \mathcal{H}_0^{-s/2} \right) f_\Lambda, $$
where
\begin{equation*}
\varphi_1 \left(\tau \right) = \varphi \left(\tau \right) \tau^{s}.
\end{equation*}
We can write $\mathcal{H}^{-s/2}-\mathcal{H}_0^{-s/2}$ and $\mathcal{H}^{-s/2}_0$ via \eqref{eq.RRH1n}. Then operating computations similar to the previous case and using $M\geq 1$, we get
\begin{equation*}
\left\|\varphi \left(\frac{ \sqrt{\mathcal{H}}}{M} \right)  f_\Lambda \right\|_{L^p(\rone)} \leq C\frac{M^s}{\Lambda^s}\|f\|_{L^p(\rone)},
\end{equation*}
for any $s\in(0,1)$. In particular it holds for $s=1/p$.
\end{proof}
\begin{cor}\label{cor:invlow}
 Assume that $V \in L^1_\gamma(\rone)$ with
 $$ \gamma >1+1/p, \  0 <s < \frac{1}{p}, \ 1 < p < \infty,$$
 and assume that the operator $\mathcal{H}$ has no point spectrum and resonance at zero.
 Then for any even function $\varphi(\tau) \in C_0^\infty(\rone \smallsetminus 0)$
 there exists a constant
 $C =C(\| V \| _{L^{1}_{\gamma}(\rone)})$ so that for any pair of real positive  numbers $\Lambda$, $M $ such that $ 0 < \Lambda  \leq M$, $M  \leq 1$ and
 for any  $f\in \mathcal{S}( \mathbb{{R}}),$
 the following inequality holds:
 \begin{equation}\label{6.31}
 \left\| \varphi \left(\frac{ \sqrt{\mathcal{H}_0}}{M} \right)  \varphi \left(\frac{ \sqrt{\mathcal{H}}}{\Lambda} \right) f \right\|_{L^p_x(\rone)}
 \leq C  \frac{\Lambda^{s}}{M^{s}} \|f\|_{L^p_x(\rone)}.
 \end{equation}	
\end{cor}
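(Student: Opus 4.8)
My plan is to prove \eqref{6.31} by re-running the argument of Lemma \ref{6.1PL} with the roles of $\mathcal H$ and $\mathcal H_0$ — and hence of the scales $M$ and $\Lambda$ — interchanged, so that the \emph{perturbed} localiser is now the one at the lower scale $\Lambda\le1$ and may be expanded by Lemma \ref{Ber1}. As in Lemma \ref{6.1PL} one first disposes of the range $\Lambda/4\le M\le1$: there $(\Lambda/M)^s\sim1$ and \eqref{6.31} follows at once from the uniform $L^p$-bounds for $\varphi(\sqrt{\mathcal H_0}/M)$ (a Fourier multiplier, hence with $M$-independent norm) and $\varphi(\sqrt{\mathcal H}/\Lambda)$ (by \eqref{eq.EB1}). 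So I may assume $\Lambda<M/4$; in particular ${\rm supp}\,\varphi(\cdot/\Lambda)$ and ${\rm supp}\,\varphi(\cdot/M)$ are disjoint, which is what produces the decisive gains.

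Since $\Lambda\le1$ and $\mathcal H$ has no point spectrum and $0$ is not a resonance, Lemma \ref{Ber1}, applied with $M$ replaced by $\Lambda$, splits the kernel of $\varphi(\sqrt{\mathcal H}/\Lambda)$ as $K_\Lambda(x,z)+R_\Lambda(x,z)$, with $K_\Lambda$ as in \eqref{eq.pasq10} (for the scale $\Lambda$) and
\[
|R_\Lambda(x,z)|\le C\Lambda\Bigl(\sum_{\pm}\tfrac{1}{\langle\Lambda(x\pm z)\rangle^{\sigma}}\Bigr)\Bigl(\tfrac{1}{\langle x\rangle^{\gamma-\sigma}}+\tfrac{1}{\langle z\rangle^{\gamma-\sigma}}\Bigr),\qquad\sigma\in(0,1)\cap(0,\gamma-1].
\]
Accordingly $\varphi(\sqrt{\mathcal H_0}/M)\varphi(\sqrt{\mathcal H}/\Lambda)f=\varphi(\sqrt{\mathcal H_0}/M)\bigl[\int K_\Lambda(\cdot,y)f(y)\,dy\bigr]+\varphi(\sqrt{\mathcal H_0}/M)\bigl[\int R_\Lambda(\cdot,y)f(y)\,dy\bigr]$. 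The remainder part is the easy one: $\varphi(\sqrt{\mathcal H_0}/M)$ is bounded on $L^p$ uniformly in $M$, so it suffices to estimate $\int R_\Lambda(\cdot,y)f(y)\,dy$ in $L^p$; inserting the displayed bound and applying Young's and H\"older's inequalities (as in the passage \eqref{EBN1}--\eqref{EBN2} of Lemma \ref{6.1PL}, but now with the prefactor $\Lambda$ in place of $M$) gives a bound $C\Lambda^{1-\sigma}\|f\|_{L^p}$, and since $\Lambda\le M\le1$ one has $\Lambda^{1-\sigma}\le\Lambda^{s}\le(\Lambda/M)^{s}$ as soon as $\sigma\le1-s$. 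Note that, unlike in Lemma \ref{6.1PL}, no Sobolev gain for $f$ is used here, precisely because the favourable prefactor is now $\Lambda$ rather than $M$.

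The leading term $\int\mathcal K(x,y)f(y)\,dy$, with $\mathcal K(x,y)=\int\varphi(\sqrt{\mathcal H_0}/M)(x,z)\,K_\Lambda(z,y)\,dz$, is the substantial part. Writing $\varphi(\sqrt{\mathcal H_0}/M)(x,z)=\widehat{\varphi(\cdot/M)}(x-z)$ (a rescaled Schwartz kernel at scale $M^{-1}$), I would compute $\mathcal K$ by decomposing $K_\Lambda$ along the three regions of \eqref{eq.pasq10} and the elementary summands $1$, $e^{\pm2\im\tau(\cdot)}$, $R_\pm(\tau)+1$ of $b$. The region cut-offs turn the $z$-integration into a half-line integration, which — after inserting Fourier representations — produces kernels of the form $\int(\tau\pm\xi)^{-1}\varphi(\xi/M)e^{\pm\im(\cdot)\xi}\,d\xi$; the decisive point is that on the supports $|\tau|\sim\Lambda$, $|\xi|\sim M$ with $\Lambda<M/4$ one has $|\tau\pm\xi|\sim M$, so no singularity occurs, these kernels are rapidly decaying at scale $M^{-1}$ in their spatial argument, and each $\tau$-derivative costs only $M^{-1}$. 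For the summands $1$ and $e^{\pm2\im\tau(\cdot)}$ the disjointness of the Fourier supports yields, after an ordinary integration by parts, an extra factor $\Lambda/M$, far more than required. For the summands $R_\pm(\tau)+1$ I would apply the fractional integration-by-parts estimate \eqref{eqIP1} of order $\sigma<\gamma-1$ (since $R_\pm(\tau)+1\in C^{0,\sigma}$), together with $\|(R_\pm(\tau)+1)/\tau\|_{C^{0,\sigma}}+|(R_\pm(\tau)+1)/\tau|\le C$ and ${\rm supp}\,\varphi\subset[1/2,2]$, exactly as in the $T,R_\pm$-part of the proof of Lemma \ref{6.1PL}. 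This leads to bounds for $\mathcal K$ of the schematic form (power $\Lambda^2$) $\times$ (kernel localised at scale $\Lambda^{-1}$ with Hölder-limited decay $\sigma$) $\times$ (Schwartz kernel at scale $M^{-1}$); splitting the $L^p$-norm between the $\Lambda$-scale and $M$-scale factors by H\"older, and using $\Lambda\le M\le1$ and $s<1/p$, one obtains $\|\int\mathcal K(\cdot,y)f(y)\,dy\|_{L^p}\le C(\Lambda/M)^{1/p}\|f\|_{L^p}\le C(\Lambda/M)^{s}\|f\|_{L^p}$, which with the remainder estimate proves \eqref{6.31}.

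The step I expect to be the main obstacle is the treatment of the $R_\pm(\tau)+1$ summands in the leading-term kernel $\mathcal K$: because $R_\pm$ is only Hölder of exponent $<\gamma-1$, the admissible number of (fractional) integrations by parts in $\tau$ is limited, and one must check that the spatial decay so obtained — combined with the power $\Lambda$ coming from $R_\pm(\tau)+1\sim\tau$ near $0$ and the $M^{-1}$-smoothing from $\varphi(\sqrt{\mathcal H_0}/M)$ — is at the same time integrable in the relevant Lebesgue exponent and of the right order in $\Lambda/M$. It is here that the hypotheses $\gamma>1+1/p$ and $0<s<1/p$ are genuinely used (mirroring their use in Lemma \ref{6.1PL}), and where one must keep careful track of which spatial variable carries the $\Lambda$-scale decay, so that the half-line kernels built from $\varphi(\sqrt{\mathcal H_0}/M)$ are handled as convolution kernels on the scale $M^{-1}$.
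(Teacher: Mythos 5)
Your proposal is correct and follows essentially the same route as the paper's proof: decompose $\varphi(\sqrt{\mathcal H}/\Lambda)$ via Lemma \ref{Ber1} into the leading kernel $K_\Lambda$ plus a remainder, bound the remainder contribution by Young/H\"older in Lorentz spaces to get $C\Lambda^{1-\sigma}\le C(\Lambda/M)^s$, and treat the leading term by computing the composed kernel $\tilde K_{M,\Lambda}$ with its non-singular $(\xi-\eta)^{-1}$ factor (thanks to the disjoint Fourier supports when $\Lambda<M/4$), integrating by parts (fractionally for the $R_\pm+1$ summands), and concluding with H\"older and scaling to obtain $(\Lambda/M)^{1/p}\le(\Lambda/M)^s$. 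The details you flag as delicate (the H\"older-limited decay from $R_\pm$ and the use of $\gamma>1+1/p$ to secure spatial decay of order $\alpha>1/p$) are handled in exactly the same way in the paper's Lemma \ref{6.1PL}, to which the paper's own proof of the corollary defers.
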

\begin{proof}
	By Lemma \ref{Ber1} we have that
	\begin{equation*}
	\varphi \left(\frac{ \sqrt{\mathcal{H}}}{\Lambda} \right) = K_\Lambda +\text{Rem}_\Lambda,
	\end{equation*}
	where the kernel $K_\Lambda(x,y)$ is defined in \eqref{eq.pasq10} and the kernel of the remainder $\text{Rem}_\Lambda(x,y)$ satisfies the estimate \eqref{eq.Ber2}.
	
	We first estimate the remainder term. By \eqref{eq.Ber2} we have
	\begin{gather*}
	 \left\| \varphi \left(\frac{ \sqrt{\mathcal{H}_0}}{M} \right)  \text{Rem}_\Lambda f \right\|_{L^p_x(\rone)}
	 \leq  \\
	 \leq C \Lambda \left\|\int_{\rone}\left(\sum_{\pm} \frac{1}{\langle \Lambda(x\pm y) \rangle^{ \sigma}}\right)\left(\frac{1}{\langle x \rangle^{\gamma-\sigma}} + \frac{1}{\langle y \rangle^{\gamma-\sigma}}\right)|f(y)|\,dy\right\|_{L^p(\rone)},
	\end{gather*}
	where $\sigma \in (0,1)\cap(0,\gamma-1]$ will be choose small enough. Applying H\"older and Young inequalities in Lorentz spaces with the following index relation
	\begin{equation*}
	\frac{1}{p}+1=\sigma +(1-\sigma)+\frac{1}{p},
	\end{equation*}
	we get
	\begin{equation*}
	\left\|\varphi\left(\frac{\sqrt{\mathcal{H}_0}}{M}\right)\text{Rem}_\Lambda f\right\|_{L^p(\rone)}\leq C\frac{\Lambda}{\Lambda^\sigma}\|f\|_{L^p(\rone)}\leq C \frac{\Lambda^s}{M^s}\|f\|_{L^p(\rone)}.
	\end{equation*}
	Now we turn to estimate the leading term. We consider
	\begin{equation*}
	K_\Lambda(x,y)= c\mathds{1}_{ x>0}\mathds{1}_{ y>0} \int_{\rone}e^{-i\tau (x-y)}\varphi\left(\frac{\tau}{\Lambda}\right)\,d\tau
	\end{equation*}
	since we can proceed similarly for the other terms defined in \eqref{eq.pasq10}.
	
	We look for the kernel $\tilde{K}_{M,\Lambda}(x,y)$ such that
	\begin{equation*}
	\varphi\left(\frac{\sqrt{\mathcal{H}_0}}{M}\right)\left(\int_{\rone}K_\Lambda(\cdot,y)f(y)\,dy\right) (x)= \int_{\rone}\tilde{K}_{M,\Lambda}(x,y)f(y)\,dy.
	\end{equation*}
	We put
	\begin{equation*}
	h(x)= \int dy \int d\tau\, e^{-i(x-y)\tau}\varphi\left(\frac{\tau}{\Lambda}\right)f(y)\mathds{1}_{y>0}
	\end{equation*}
	and
	\begin{equation*}
	g(x)= c\mathds{1}_{x>0}h(x).
	\end{equation*}
	Using the notation above we have that
	\begin{equation*}
	\varphi\left(\frac{\sqrt{\mathcal{H}_0}}{M}\right)g(x)= c\int e^{ix\xi}\varphi\left(\frac{\xi}{M}\right)\hat{g}(\xi)\,d\xi
	\end{equation*}
	and
	\begin{equation*}
	\hat{h}(\eta)=c  \varphi\left(\frac{\eta}{\Lambda}\right)\int dy  e^{iy\eta}f(y)\mathds{1}_{y>0}.
	\end{equation*}
	Hence we deduce
	\begin{equation*}
	\varphi\left(\frac{\sqrt{\mathcal{H}_0}}{M}\right)g(x)= \int dy \underbrace{\int d\xi \int d\eta e^{ix\xi} e^{iy \eta}\varphi\left(\frac{\xi}{M}\right) \varphi\left(\frac{\eta}{\Lambda}\right)\frac{1}{\xi-\eta}}_{\tilde{K}_{M,\Lambda}(x,y)} f(y)\mathds{1}_{y>0}.
	\end{equation*}
	Then, as in \eqref{eq.KMLest}, integrating by parts we get
	\begin{equation*}
	\left|\tilde{K}_{M,\Lambda}(x,y)\right|\leq C\frac{M\Lambda}{\langle Mx \rangle^2\langle \Lambda y \rangle^2}\frac{1}{\max(M,\Lambda)}.
	\end{equation*}
	We note that we are considering the $0<\Lambda<M\leq 1$. Then, using H\"older inequality combined with a scaling argument we get
	\begin{equation*}
	\left\|\varphi\left(\frac{\sqrt{\mathcal{H}_0}}{M}\right)g\right\|_{L^p(\rone)}\leq C \frac{\Lambda}{M^{1/p}\Lambda^{1-1/p}}\|f\|_{L^p(\rone)}.
	\end{equation*}
	
\end{proof}

\begin{cor}\label{cor:invhigh}
Assume that $V \in L^1_\gamma(\rone)$, $\gamma> 1+1/p$, the operator $\mathcal{H}$ has no point spectrum and resonance at zero.
Then for any even function $\varphi(\tau) \in C_0^\infty(\rone \setminus 0)$
there exists a constant
$C =C(\| V \| _{L^{1}_{\gamma}(\rone)})$ so that for any pair of real positive  numbers $\Lambda$, $M $ and
for any  $f\in \mathcal{S}( \mathbb{{R}}),$
the following inequalities hold:
\begin{equation}\label{6.3m1}
\left\| \varphi \left(\frac{ \sqrt{\mathcal{H}_0}}{M} \right)  \varphi \left(\frac{ \sqrt{\mathcal{H}}}{\Lambda} \right) f \right\|_{L^p_x(\rone)}
\leq C  \left(\frac{\Lambda}{M}\right)^{1/p} \|f\|_{L^p_x(\rone)},  \ \forall \  0 < \Lambda  \leq M, \ M  \geq 1
\end{equation}
and
\begin{equation}\label{6.4m1}
\left\| \varphi \left(\frac{ \sqrt{\mathcal{H}_0}}{M} \right)  \varphi \left(\frac{ \sqrt{\mathcal{H}}}{\Lambda} \right) f \right\|_{L^p_x(\rone)}
\leq C  \left(\frac{M}{\Lambda}\right)^{1/p}  \|f\|_{L^p_x(\rone)},  \ \forall \  \Lambda \geq M, \ M \geq 1,
\end{equation}
with $1<p<\infty$.		
\end{cor}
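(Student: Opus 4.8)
The plan is to transcribe, with $\mathcal{H}$ and $\mathcal{H}_0$ interchanged, the proof of Lemma~\ref{l.ebn1}, replacing the high-energy expansion it uses by \eqref{eq.Ber2m} (and, in the range where the $\mathcal{H}$-scale falls below $1$, by the low-energy expansion \eqref{eq.Ber2}--\eqref{eq.pasq10}, exactly as in Corollary~\ref{cor:invlow}). First I would dispose of the comparable range $M/4\le\Lambda\le 4M$: there both \eqref{6.3m1} and \eqref{6.4m1} are immediate, since $\varphi(\sqrt{\mathcal{H}}/\Lambda)$ is $L^p$-bounded by \eqref{eq.EB1}, $\varphi(\sqrt{\mathcal{H}_0}/M)$ is $L^p$-bounded classically, and $(\Lambda/M)^{1/p},(M/\Lambda)^{1/p}\sim 1$. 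So it remains to treat $\Lambda<M/4$ for \eqref{6.3m1} and $\Lambda>4M$ for \eqref{6.4m1}, always with $M\ge 1$; note that in the second case $\Lambda>4$, so only \eqref{6.3m1} can meet the range $\Lambda<1$.

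For \eqref{6.3m1}, set $s=1/p$ and $\varphi_1(\tau)=|\tau|^{-s}\varphi(\tau)$, and use
\[
\varphi\!\left(\tfrac{\sqrt{\mathcal{H}_0}}{M}\right)\!\varphi\!\left(\tfrac{\sqrt{\mathcal{H}}}{\Lambda}\right)f
= M^{-s}\varphi_1\!\left(\tfrac{\sqrt{\mathcal{H}_0}}{M}\right)\mathcal{H}^{s/2}\varphi\!\left(\tfrac{\sqrt{\mathcal{H}}}{\Lambda}\right)f
+ M^{-s}\varphi_1\!\left(\tfrac{\sqrt{\mathcal{H}_0}}{M}\right)\!\big(\mathcal{H}_0^{s/2}-\mathcal{H}^{s/2}\big)\varphi\!\left(\tfrac{\sqrt{\mathcal{H}}}{\Lambda}\right)f .
\]
The first summand is $\lesssim M^{-s}\|\mathcal{H}^{s/2}\varphi(\sqrt{\mathcal{H}}/\Lambda)f\|_{L^p}\lesssim (\Lambda/M)^{s}\|f\|_{L^p}$, using the classical $L^p$-boundedness of $\varphi_1(\sqrt{\mathcal{H}_0}/M)$ and the Bernstein bound $\|\mathcal{H}^{s/2}\varphi(\sqrt{\mathcal{H}}/\Lambda)f\|_{L^p}\le C\Lambda^{s}\|f\|_{L^p}$, which is \eqref{eq.EB1} applied to the even symbol $|\tau|^{s}\varphi(\tau)$. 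For the second summand, write (via \eqref{eq.RRH1}, \eqref{eq.RRH1n} and the resolvent identity) $\mathcal{H}_0^{s/2}-\mathcal{H}^{s/2}=c\int_0^\infty\lambda^{s/2}(\lambda+\mathcal{H}_0)^{-1}V(\lambda+\mathcal{H})^{-1}\,d\lambda$ and estimate the $\lambda$-integrand by
\[
\Big\|\varphi_1\!\left(\tfrac{\sqrt{\mathcal{H}_0}}{M}\right)(\lambda+\mathcal{H}_0)^{-1}\Big\|_{L^1\to L^p}\,\|V\|_{L^1}\,\Big\|(\lambda+\mathcal{H})^{-1}\varphi\!\left(\tfrac{\sqrt{\mathcal{H}}}{\Lambda}\right)f\Big\|_{L^\infty},
\]
where the first factor is $\lesssim M^{1-1/p}(\lambda+M^2)^{-1}$ (a classical free estimate). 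Integrating in $\lambda$ as in \eqref{eq.PL24as} then yields $\lesssim M^{-1-1/p}\Lambda^{1/p}\|f\|_{L^p}\le(\Lambda/M)^{1/p}\|f\|_{L^p}$ since $M\ge 1$, which is \eqref{6.3m1}; and \eqref{6.4m1} follows in the same way, with $\varphi_1(\tau)=|\tau|^{s}\varphi(\tau)$, the factor $M^{s}$ in place of $M^{-s}$, and $\mathcal{H}_0^{-s/2}-\mathcal{H}^{-s/2}$ in place of $\mathcal{H}_0^{s/2}-\mathcal{H}^{s/2}$.

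The one genuinely new ingredient, and the step I expect to be the main obstacle, is the perturbed resolvent--localization bound $\|(\lambda+\mathcal{H})^{-1}\varphi(\sqrt{\mathcal{H}}/\Lambda)f\|_{L^\infty}\le C\Lambda^{1/p}(\lambda+\Lambda^2)^{-1}\|f\|_{L^p}$ used above, which is the analogue of \eqref{eq.C3}. I would prove it by reducing, through $(\lambda+\mathcal{H})^{-1}\varphi(\sqrt{\mathcal{H}}/\Lambda)=(\lambda+\Lambda^2)^{-1}\Phi(\sqrt{\mathcal{H}}/\Lambda)$ with $\Phi$ ranging over a fixed bounded family of functions in $C_0^\infty([1/2,2])$, to the uniform Bernstein bound $\|\Phi(\sqrt{\mathcal{H}}/\Lambda)f\|_{L^\infty}\le C\Lambda^{1/p}\|f\|_{L^p}$, and then inserting the kernel splitting $\Phi(\sqrt{\mathcal{H}}/\Lambda)=\Phi(\sqrt{\mathcal{H}_0}/\Lambda)+\mathrm{Rem}_\Lambda$ from Lemma~\ref{Ber1m} when $\Lambda\ge 1$, and its low-energy version from Lemma~\ref{Ber1} (with the leading kernel $K_\Lambda$ of \eqref{eq.pasq10} playing the role of the free piece) when $\Lambda<1$: the free piece is controlled by \eqref{eq.If1} and gives exactly $\Lambda^{1/p}$, while the contribution of $\mathrm{Rem}_\Lambda$ is estimated from its pointwise kernel bound \eqref{eq.Ber2m}/\eqref{eq.Ber2} together with H\"older's inequality in the integration variable. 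This last point is delicate — it is where the hypothesis $\gamma>1+1/p$ enters, through a choice of $\sigma\in(0,\min(1,\gamma-1))$ — and, when the admissible range of $\sigma$ is too narrow to close the $L^\infty$ endpoint directly, I would supplement it with the duality identity $\|\varphi(\sqrt{\mathcal{H}_0}/M)\varphi(\sqrt{\mathcal{H}}/\Lambda)\|_{L^p\to L^p}=\|\varphi(\sqrt{\mathcal{H}}/\Lambda)\varphi(\sqrt{\mathcal{H}_0}/M)\|_{L^{p'}\to L^{p'}}$, reducing the estimate to an application of Lemma~\ref{l.ebn1} with its two scales exchanged. Once the relevant $L^p$-Bernstein bounds and this perturbed analogue of \eqref{eq.C3} are available, the remaining estimates are a line-by-line copy of the proofs of Lemma~\ref{l.ebn1} and Corollary~\ref{cor:invlow}.
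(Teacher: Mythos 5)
Your proposal is exactly the paper's route: the paper's entire proof of this corollary is the single sentence that one repeats the argument of Lemma \ref{l.ebn1} with $\mathcal{H}$ and $\mathcal{H}_0$ interchanged, which is precisely the transcription you carry out. You in fact supply more than the paper does, by isolating the one genuinely new ingredient (the perturbed analogue of \eqref{eq.C3}) and indicating how to obtain it from the kernel splittings of Lemmas \ref{Ber1m} and \ref{Ber1}.
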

\begin{proof}
	The proof of the inequalities \eqref{6.3m1} and \eqref{6.4m1} follows repeating the same arguments of Lemma \ref{l.ebn1} and replacing $\mathcal{H}$ with $\mathcal{H}_0$ and vice versa.
\end{proof}

Now we can turn to the following.
\begin{proof}[Proof  of Theorem \ref{MT1}]
	We have to prove the equivalence of the norms in \eqref{eq:BS1} and \eqref{eq:BS2}, i.e.
	\begin{equation}\label{eq.eqn1}
	\sum_{k=-\infty}^\infty 2^{2ks} \left\| \varphi \left(
	\frac{ \sqrt{\mathcal{H}_V}}{2^k} \right)f\right\|^2_{L^p(\R)} \sim  \sum_{j=-\infty}^\infty 2^{2js} \left\| \varphi \left(
	\frac{ \sqrt{\mathcal{H}_0}}{2^j} \right)f\right\|^2_{L^p(\R)}.
	\end{equation}
	We set
	$$ a_k =  \left\| \varphi \left(
	\frac{ \sqrt{\mathcal{H}}}{2^k} \right)f\right\|_{L^p(\R)}, \ b_j =\left\| \varphi \left(
	\frac{ \sqrt{\mathcal{H}_0}}{2^j} \right)f\right\|_{L^p(\R)}.$$
	Using the Paley-Littlewood partition
	$$ f = \sum_{j=-\infty}^\infty f_j = \sum_{j=-\infty}^\infty \varphi \left(
	\frac{ \sqrt{\mathcal{H}_0}}{2^j} \right)f,$$
	we take $\psi(\tau) \in C_0^\infty ({\mathbf R} _+) $ such that $\psi(\tau) =1$ on the support of $\varphi.$ Then we can use the identity
	\begin{equation}\label{eq.paley}
	\varphi \left(
	\frac{ \sqrt{\mathcal{H}_V}}{2^k} \right)f = \sum_{j=-\infty}^\infty \varphi \left(
	\frac{ \sqrt{\mathcal{H}_V}}{2^k} \right) \psi \left(
	\frac{ \sqrt{\mathcal{H}_0}}{2^j} \right) f_j.
	\end{equation}
	
	We distinguish the two cases $k\geq 0$ and $k<0$.
	
Let $k\geq 0$ be fixed. We can apply Lemma \ref{l.ebn1} and we obtain that
	\begin{equation*}
	a_k = \left\| \varphi \left(
	\frac{ \sqrt{\mathcal{H}_V}}{2^k} \right)f\right\|_{L^p(\R)} \leq C \sum_{j=-\infty}^\infty  2^{-|k-j|(1/p)} \left\| f_j\right\|_{L^p(\R)} =C
	\sum_{j=-\infty}^\infty  2^{-|k-j|(1/p)} b_j.
	\end{equation*}
	From this we deduce that
	\begin{equation}\label{eq.halfeq}
	\left\| 2^{ks}a_k\right\|_{\ell^2_{k\geq 0}}\leq C \left\|2^{js}b_j\right\|_{\ell^2_j(\mathbb{Z})}.
	\end{equation}
	Indeed we have
	\begin{equation}\label{eq.es1}
	\left\| 2^{ks}a_k\right\|_{\ell^2_{k\geq 0}(\mathbb{Z})}\leq C\left\| \sum\nolimits_{j\in\mathbb{Z}} 2^{-|j-k|(1/p)}2^{-(j-k)s} 2^{js}\|f_j\|_{L^p(\R)}\right\|_{\ell^2_{k\geq 0}}.
	\end{equation}
	Using the discrete Young inequality combined with
	\begin{equation}\label{eq.es2}
	\left\|2^{{-|n|(1/p)}-ns}\right\|_{\ell^1_n(\mathbb{Z})}\leq C,
	\end{equation}
	with $0<s<1/p$, we get the inequality \eqref{eq.halfeq}.
	
	Let $k<0$ be fixed. Then we write
	\begin{align*}
	2^{ks}a_k\leq 2^{ks} \sum_{j\leq k}\left\|\varphi \left(
	\frac{ \sqrt{\mathcal{H}_V}}{2^k} \right) \psi \left(
	\frac{ \sqrt{\mathcal{H}_0}}{2^j} \right) f_j\right\|_{L^p(\rone)} +\\
	+2^{ks}\sum_{j\geq k}\left\|\varphi \left(
	\frac{ \sqrt{\mathcal{H}_V}}{2^k} \right) \psi \left(
	\frac{ \sqrt{\mathcal{H}_0}}{2^j} \right) f_j\right\|_{L^p(\rone)}.
	\end{align*}
	Now we estimate the $\ell^2_{k\leq 0}$ norm of the two addends above.
	
	We can estimate the first addend as in the case $k>0$ using the inequality \eqref{6.3} and the index $s'$ such that $0<s<s'<1/p$. Then we can proceed as in \eqref{eq.es1}, \eqref{eq.es2} replacing $1/p$ with $s'$.
	
	For the second addend the estimate is simpler. Indeed, using \eqref{eq.EB1} we have
	\begin{equation}\label{eq.ban}
	2^{ks}\sum_{j\geq k}\left\|\varphi \left(
	\frac{ \sqrt{\mathcal{H}_V}}{2^k} \right) \psi \left(
	\frac{ \sqrt{\mathcal{H}_0}}{2^j} \right) f_j\right\|_{L^p(\rone)}\leq C \sum_{j\geq k} 2^{ks}2^{-js} 2^{js}\|f_j\|_{L^p(\rone)}.
	\end{equation}
	Since we are considering the case $j\geq k$ and $k<0$, we can estimate the right side above with the sum
	\begin{equation*}
	C\sum_{j\in\mathbb{Z}} 2^{-|k-j|s} 2^{js}\|f_j\|_{L^p(\rone)}.
	\end{equation*}
	Now, computing the $\ell^2_k$ norm and applying the discrete Young inequality we complete the proof of the estimate
	\begin{equation}\label{eq.halfeq}
	\left\| 2^{ks}a_k\right\|_{\ell^2_{k}(\mathbb{Z})}\leq C \left\|2^{js}b_j\right\|_{\ell^2_j(\mathbb{Z})}.
	\end{equation}
	
	To prove that
	\begin{equation}\label{eq.halfeq2}
	\left\| 2^{js}b_j\right\|_{\ell^2_j(\mathbb{Z})}\leq C \left\|2^{ks}a_k\right\|_{\ell^2_j(\mathbb{Z})},
	\end{equation}
	we use Corollary \ref{cor:invlow} and Corollary \ref{cor:invhigh}. Indeed, if we write
	\begin{equation*}
	\varphi \left(
	\frac{ \sqrt{\mathcal{H}_0}}{2^j} \right)f = \sum_{k=-\infty}^\infty \varphi \left(
	\frac{ \sqrt{\mathcal{H}_0}}{2^j} \right) \psi \left(
	\frac{ \sqrt{\mathcal{H}}}{2^k} \right) f_k,
	\end{equation*}
	where
	\begin{equation*}
	f_k= \varphi \left(
	\frac{ \sqrt{\mathcal{H}_V}}{2^k}\right)f,
	\end{equation*}
	as before we can distinguish the case $j\geq 0$ and $j<0$.

	Computations similar to the ones  used to prove \eqref{eq.halfeq} conclude the proof.
\end{proof}

\section{Estimates for the modified Jost functions}
\label{sec:spectral}

In this section we recall some classical results concerning the spectral decomposition of the perturbed Hamiltonian.
Recall that the  Jost functions are  solutions $f_{
\pm } (x,\tau )=e^{\pm i\tau x}m_{
\pm } (x,\tau )$ of $\mathcal{H }u=\tau ^2 u$ with
$$ \lim _{x\to +\infty }   {m_{ + } (x,\tau )}  =1 =
\lim _{x\to -\infty }  {m_{- } (x,\tau )}  . $$
We set  $x_+:=\max \{ 0,x \}$,   $x_-:=\max \{ 0,-x \}$.


The  estimate and  the asymptotic expansions of $m_\pm(x,\tau)$ are based on  the following integral equations
\begin{alignat}{2}\label{eq.Igra1n}
        m_\pm(x,\tau) = 1+ K_{\pm}^{(\tau)} (m_{\pm}(\cdot, \tau))(x),
    \end{alignat}
    where
    $ K_\pm^{(\tau)}$ is the integral  operator defined as follows
$$ K_\pm^{(\tau)}  (f)(x) =
   \pm  \int_x^{\pm \infty} D(\pm(t-x),\tau) V(t) f(t) dt $$
    and
\begin{equation}\label{eq.AE3}
     D(t,\tau) = \frac{e^{2it\tau}-1}{2i\tau} = \int_0^t e^{2iy\tau} dy;
\end{equation}

The following lemma  is well known.
\begin{lem} (see Lemma 1 p. 130 \cite{DeiTru})
\label{lem:Jost} Assume  $V \in L^{1}_2(\rone)$. Then we have the properties:
 \begin{enumerate}[noitemsep,label=\alph*)]
   \item for any $x \in \R$ the function
\begin{equation}\label{eq.Jo1}
   \tau \in \overline{\cone_\pm} \mapsto m_\pm (x , \tau),\ \ \cone_\pm = \{\tau \in \cone; {\rm Im} \tau \gtrless 0 \}
\end{equation}
is analytic in $\cone_\pm$ and $  C^{1} (\overline{\cone_\pm}  );$
   \item  there exist  constants $C_1$  and $C_2>0$
such that for any $x, \tau \in \rone$:
   \begin{align}  \label{eq:kernel2n}
 &  |m_\pm(x, \tau )-1|\le  C _1 \langle  x_{\mp}\rangle
\langle \tau \rangle ^{-1}  \ ;
  \\&   \label{eq:tderm}   |  \partial _\tau   m_\pm(x, \tau ) | \le  C_2 \langle x \rangle^2.
     \end{align}
 \end{enumerate}
    \end{lem}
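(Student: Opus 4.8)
The plan is to treat the integral equation \eqref{eq.Igra1n} as a fixed-point problem in a weighted sup-norm space and read off the claimed bounds from a Neumann-series expansion. First I would record the elementary pointwise bound on the kernel $D$, namely $|D(t,\tau)| \leq \min\{|t|, |\tau|^{-1}\} \leq \langle t\rangle \langle \tau\rangle^{-1}$ for $t \geq 0$, which follows immediately from the two representations in \eqref{eq.AE3} (the integral form gives $|D| \le t$, and computing the difference quotient gives $|D|\le |\tau|^{-1}$). This is the only analytic input about $D$ that the estimate \eqref{eq:kernel2n} needs. With it, for (say) the $+$ equation and $x \le t$, one has $|D(t-x,\tau)|\,|V(t)| \le \langle t - x\rangle \langle\tau\rangle^{-1}|V(t)| \le \langle x_-\rangle \langle t\rangle \langle\tau\rangle^{-1}|V(t)|$ after using $\langle t-x\rangle \le \langle x_-\rangle\langle t\rangle$ and $x_- \ge 0$ on the relevant range; combined with $V \in L^1_2 \subset L^1_1$ this makes $K_+^{(\tau)}$ a bounded operator on the space of continuous functions with norm $\|f\| = \sup_x |f(x)| / \langle x_-\rangle$ wait — more precisely one works with $\sup_x |f(x)-1|/\langle x_-\rangle$ but it is cleaner to set up the Volterra iteration directly.

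The core step is the Volterra/Picard iteration. Define $h_0 \equiv 0$ and $h_{n+1} = K_+^{(\tau)}(1 + h_n)$, so that $m_+ - 1 = \sum_{n\ge 1} h_n$ formally. Using the kernel bound above and iterating the integral, one shows by induction that
\[
|h_n(x,\tau)| \le \frac{\langle x_-\rangle}{\langle\tau\rangle}\,\frac{1}{(n-1)!}\left(\frac{1}{\langle\tau\rangle}\int_x^\infty \langle t\rangle |V(t)|\,dt\right)^{n-1} \le \frac{\langle x_-\rangle}{\langle\tau\rangle}\,\frac{(C\|V\|_{L^1_1})^{n-1}}{(n-1)!},
\]
where the standard trick is that the ordered $n$-fold integral of $\prod |V(t_i)|\langle t_i\rangle$ produces the factorial. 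Summing the series gives \eqref{eq:kernel2n} with $C_1 = e^{C\|V\|_{L^1_1}}$ (finite since $V \in L^1_2 \subset L^1_1$), and along the way the series converges uniformly in $\tau \in \overline{\cone_+}$, which gives the continuity and, together with the analyticity of each $h_n$ in $\tau$ (each is an integral of entire functions of $\tau$ against an $L^1$ weight, with locally uniform bounds), the analyticity in $\cone_+$ asserted in part (a). The $-$ case is symmetric.

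For the derivative bound \eqref{eq:tderm} I would differentiate the integral equation in $\tau$: $\partial_\tau m_+ = (\partial_\tau K_+^{(\tau)})(m_+) + K_+^{(\tau)}(\partial_\tau m_+)$, a second Volterra equation for $g := \partial_\tau m_+$ with the same operator $K_+^{(\tau)}$ and inhomogeneous term $(\partial_\tau K_+^{(\tau)})(m_+)$. One needs $|\partial_\tau D(t,\tau)| \le C|t|^2$ (from $\partial_\tau \int_0^t e^{2iy\tau}dy = \int_0^t 2iy\, e^{2iy\tau}dy$, bounded by $t^2$) together with the already-established bound on $m_+$; this bounds the inhomogeneous term by $C\langle x\rangle^2 \|V\|_{L^1_2}$, and the same Neumann-series/factorial argument applied to $g$ then yields $|\partial_\tau m_+(x,\tau)| \le C_2 \langle x\rangle^2$, with the $C^1(\overline{\cone_+})$ regularity falling out of uniform convergence. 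The main technical obstacle — and the only place care is genuinely required — is organizing the weighted estimates so the factorials appear and the constants depend only on $\|V\|_{L^1_2}$; the $L^1_2$ hypothesis (rather than merely $L^1_1$) is used precisely for the $\langle x\rangle^2$ weight in the derivative bound. Since this is the classical computation of Deift–Trubowitz, I would cite \cite{DeiTru} for the details and only sketch the iteration as above.
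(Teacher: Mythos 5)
The paper does not actually prove this lemma: it is quoted verbatim as ``well known'' with a pointer to Lemma 1, p.~130 of \cite{DeiTru}, so there is no internal proof to compare against. Your sketch is the classical Deift--Trubowitz argument (Volterra/Neumann iteration of \eqref{eq.Igra1n} with the kernel bound $|D(t,\tau)|\leq\min\{t,|\tau|^{-1}\}$ and the factorial gain from ordered integration), and it is consistent in spirit with how the paper \emph{does} prove its refinements (Lemmas \ref{lem:Jostk0} and \ref{lem:JostIM}): there the authors put the weight on the unknown, reduce to an inequality of the form \eqref{eq.a2.m1rone}, and invoke the Gronwall lemma of Appendix I, which is just the summed form of your Neumann series. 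Two points of care. First, a notational slip: with $h_{n+1}=K_+^{(\tau)}(1+h_n)$ the $h_n$ are the \emph{partial sums} of the Neumann series, so $m_+-1=\lim_n h_n$, not $\sum_{n\ge1}h_n$; your displayed factorial bound is the right one for the individual iterates $K_+^{(\tau)}{}^{\,n}(1)$, and that is what should be summed. Second, for \eqref{eq:tderm} the iteration does not close with the weight $\langle x\rangle^2$ as written: bounding the Volterra kernel acting on a function of size $\langle t\rangle^2$ by $\langle t-x\rangle\langle t\rangle^2\lesssim\langle x\rangle^2\cdot\langle t\rangle^{3}|V(t)|/\langle x\rangle^{2}$ would require $V\in L^1_3$. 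One must instead run the iteration with the sharper weight $\langle x_-\rangle^{2}$ (the inhomogeneous term $(\partial_\tau K_+^{(\tau)})(m_+)$ is in fact $O(\langle x_-\rangle^{2}\|V\|_{L^1_2})$, using $|\partial_\tau D(y,\tau)|\leq\min\{y^2,Cy/|\tau|\}$ together with \eqref{eq:kernel2n}), and then $\langle x_-\rangle^2\leq\langle x\rangle^2$ gives the stated estimate. This is exactly the kind of weight bookkeeping the paper carries out explicitly (via the constants $c_1,c_2,c_3$) in the proofs of Lemmas \ref{lem:Jostk0} and \ref{lem:JostIM}, so your deferral to \cite{DeiTru} for these details is reasonable, but as literally written that step of your sketch would fail.
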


A slight improvement is given in the next Lemma.
\begin{lem}\label{lem:Jostk0}
	Suppose $V\in L^1_\gamma(\rone)$ with $\gamma\geq 1$. Then we have the following properties:
	\begin{itemize}
		\item[a)] There exists a constant $C>0$ such that for any $x\in \rone$, $\tau\in \overline{\mathbb{C}_\pm}$, we have
		\begin{equation}\label{eq.1k0}
		\left|m_\pm(x,\tau)-1\right|\leq C\frac{\langle x_\mp\rangle}{\langle x_\pm\rangle^{\gamma-1}};
		\end{equation}
		\item[b)]There exists a constant $C>0$ such that for any $x\in \rone$, $\tau\in \overline{\mathbb{C}_\pm}\smallsetminus\{0\}$, we have
		\begin{equation}\label{eq.2k0}
		\left|m_\pm(x,\tau)-1\right|\leq C\frac{\langle x_\mp\rangle}{\langle x_\pm\rangle^{\gamma}|\tau|};
		\end{equation}
		\item[c)]Let $\sigma\in[0,1)$. Then there exists a constant $C>0$ such that for any $x\in \rone$ we have
		\begin{equation}\label{eq.3k0}
		\left\|m_\pm(x,\tau)-1\right\|_{C^{0,\sigma}(\mathbb{C}_\pm)}\leq C\frac{\langle x_\mp\rangle^{1+\sigma}}{\langle x_\pm\rangle^{\gamma-1-\sigma}}, \ \gamma>1,\  0\leq\sigma\leq \gamma-1;
		\end{equation}
	\item[d)]Let $\sigma\in[0,1)$. Then there exists a constant $C>0$ such that for any $x\in \rone$ we have
	\begin{equation}\label{eq.4k0}
	\left\|\tau (m_\pm(x,\tau)-1)\right\|_{C^{0,\sigma}(\mathbb{C}_\pm)}\leq C\frac{\langle x_\mp\rangle^{1+\sigma}}{\langle x_\pm\rangle^{\gamma-\sigma}}, \ \gamma>1.
	\end{equation}	
	\end{itemize}
\end{lem}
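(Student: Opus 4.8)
The plan is to obtain all four bounds from the Volterra integral equations \eqref{eq.Igra1n} by Neumann iteration, the whole analysis resting on elementary estimates for the kernel $D(s,\tau)=\int_0^s e^{2iy\tau}\,dy$. For $s\ge 0$ and $\tau\in\overline{\mathbb C_\pm}$ one has $|e^{2iy\tau}|\le 1$ on the path, so $|D(s,\tau)|\le s$; also $|D(s,\tau)|\le 2/|\tau|$ from $|e^{2is\tau}-1|\le 2$; $|\partial_\tau D(s,\tau)|=\bigl|\int_0^s 2iy\,e^{2iy\tau}\,dy\bigr|\le s^2$; and $\tau D(s,\tau)=\tfrac1{2i}(e^{2is\tau}-1)$ obeys $|\tau D(s,\tau)|\le 1$, $|\partial_\tau(\tau D(s,\tau))|\le s$. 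Interpolating the first with the third, and the last two, gives for $\sigma\in[0,1]$
\[
 |D(s,\tau_1)-D(s,\tau_2)|\le C\,s^{1+\sigma}\,|\tau_1-\tau_2|^{\sigma},\qquad
 |\tau_1 D(s,\tau_1)-\tau_2 D(s,\tau_2)|\le C\,s^{\sigma}\,|\tau_1-\tau_2|^{\sigma}.
\]

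For (a) I expand $m_\pm=\sum_{n\ge0}(K_\pm^{(\tau)})^n(1)$ and bound $\bigl|\prod_j D(\pm(t_j-t_{j-1}),\tau)\bigr|\le\prod_j(t_j-t_{j-1})$ inside the $n$-fold integral over the simplex $\{x=t_0\le t_1\le\cdots\le t_n\}$ (for the ``$+$'' case; the mirror image for ``$-$''). The key bookkeeping point is that exactly one factor, the one carrying the variable closest to $x$, is estimated against $\langle x_\mp\rangle$ times an integrable $\gamma$-moment of $V$, while the remaining factors only cost the first moment $\|V\|_{L^1_1}$ and the ordered simplex supplies a $1/(n-1)!$. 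Since on the support $t\ge x$, for $x$ on the favourable side that single factor is $\le\langle t\rangle=\langle t\rangle^{\gamma}/\langle t\rangle^{\gamma-1}\le\langle t\rangle^{\gamma}/\langle x_\pm\rangle^{\gamma-1}$, which produces the decay $\langle x_\pm\rangle^{-(\gamma-1)}$; summing the series (uniformly in $\tau$) yields \eqref{eq.1k0}. Part (b) is the same iteration with the single outermost kernel factor bounded by $2/|\tau|$ instead of $s$ and the companion $|V(t)|$ supplying $1\le\langle t\rangle^\gamma/\langle x_\pm\rangle^\gamma$: trading one power of the $\langle x_\pm\rangle$-weight for $|\tau|^{-1}$ upgrades $\langle x_\pm\rangle^{-(\gamma-1)}$ to $\langle x_\pm\rangle^{-\gamma}$, giving \eqref{eq.2k0}.

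For (c) I estimate the increment in $\tau$. Subtracting \eqref{eq.Igra1n} at $\tau_1,\tau_2$ and peeling off the kernel variation, $g(x):=m_\pm(x,\tau_1)-m_\pm(x,\tau_2)$ solves the same Volterra equation $g=E_\pm+K_\pm^{(\tau_2)}(g)$ with source
\[
 E_\pm(x)=\pm\int_x^{\pm\infty}\bigl[D(\pm(t-x),\tau_1)-D(\pm(t-x),\tau_2)\bigr]\,V(t)\,m_\pm(t,\tau_1)\,dt.
\]
Plugging the interpolated kernel bound together with $|m_\pm(t,\tau_1)|\le C\bigl(1+\langle t_\mp\rangle\langle t_\pm\rangle^{-(\gamma-1)}\bigr)$ from (a), and extracting one growth factor $(t-x)^{1+\sigma}$ against an integrable $\gamma$-moment exactly as above, gives $|E_\pm(x)|\le C\,|\tau_1-\tau_2|^{\sigma}\,\langle x_\mp\rangle^{1+\sigma}\langle x_\pm\rangle^{-(\gamma-1-\sigma)}$ when $0\le\sigma\le\gamma-1$; since the iterates of $K_\pm^{(\tau_2)}$ are uniformly bounded on the corresponding weighted space (same computation as in (a)), inverting the Volterra operator transfers this to $g$, which with the sup bound from (a) gives \eqref{eq.3k0}. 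For (d) I run the same scheme on $\tau(m_\pm-1)=\pm\int_x^{\pm\infty}\tau D(\pm(t-x),\tau)V(t)m_\pm(t,\tau)\,dt$, using the product estimate $\|fg\|_{C^{0,\sigma}}\le\|f\|_{C^{0,\sigma}}\|g\|_{C^0}+\|f\|_{C^0}\|g\|_{C^{0,\sigma}}$ in $\tau$ with $\|\tau D(\pm(t-x),\cdot)\|_{C^{0,\sigma}}\le C|t-x|^{\sigma}$ and $\|\tau D(\pm(t-x),\cdot)\|_{C^0}\le C$, feeding in the $C^0$- and $C^{0,\sigma}$-bounds on $m_\pm$ from (a) and (c); the worst term is $\int|t-x|^{\sigma}|V(t)|\langle t_\mp\rangle\langle t_\pm\rangle^{-(\gamma-1)}\,dt$, dominated by $C\langle x_\mp\rangle^{1+\sigma}\langle x_\pm\rangle^{-(\gamma-\sigma)}\|V\|_{L^1_\gamma}$ after using $\gamma\ge1$ to redistribute the powers of $\langle t\rangle$.

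The main difficulty is the weight bookkeeping on the ``wrong'' side of the origin (say $x<0$ for $m_+$), where the one-step estimate is only linear in $\langle x_-\rangle$ and not contractive; one must retain the ordered-simplex structure of the Neumann series and distribute the factors $(t_j-t_{j-1})$ so that precisely one of them produces the single allowed power $\langle x_\mp\rangle$ (or $\langle x_\mp\rangle^{1+\sigma}$ in (c)--(d)), the others being absorbed by $\|V\|_{L^1_1}$ while only the tail sees the full $\gamma$-moment. Verifying that this distribution is compatible with summability of the series is the one genuinely delicate point, and it is exactly the refinement of the argument of Lemma~1 in \cite{DeiTru} from $V\in L^1_2$ to $V\in L^1_\gamma$.
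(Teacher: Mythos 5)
Your treatment of (a)--(c) is in substance the paper's own proof: both arguments rest on the Marchenko--Volterra equations \eqref{eq.Igra1n}, the elementary bounds $|D(s,\tau)|\leq\min(s,|\tau|^{-1})$, $|\tau D(s,\tau)|\leq 1$ together with their interpolated H\"older versions, and the same weight bookkeeping in which exactly one kernel factor is charged the full $\gamma$-moment of $V$ (producing the decay $\langle x_\pm\rangle^{-(\gamma-1)}$ on the favourable side, or the single power $\langle x_\mp\rangle$ on the unfavourable one) while all remaining factors only cost $\|V\|_{L^1_1}$. The one presentational difference is that you sum the Neumann series over the ordered simplex explicitly, extracting the $1/(n-1)!$, whereas the paper feeds the weighted quantity $v(x,\tau)=\langle x_+\rangle^{\gamma-1}\langle x_-\rangle^{-1}|m_+(x,\tau)-1|$ into a half-line Gronwall inequality (Lemma \ref{l.a2.1rone}); these are the same estimate packaged differently, and your "delicate" distribution $t_j-t_{j-1}\leq C\langle (t_{j-1})_-\rangle\langle (t_j)_+\rangle$ is precisely what makes the paper's comparison function $b(t)=\langle t\rangle^{\gamma}|V(t)|$ integrable.

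There is, however, a genuine gap in (d). You close the H\"older product rule with the pair $\|\tau D\|_{C^{0,\sigma}}\|m_\pm-1\|_{C^{0}}$ and $\|\tau D\|_{C^{0}}\|m_\pm-1\|_{C^{0,\sigma}}$, importing the second factor of the second term from (c). But (c) only holds for $0\leq\sigma\leq\gamma-1$ (its source term requires $\int\langle t\rangle^{1+\sigma}|V|<\infty$), whereas (d) is asserted for every $\sigma\in[0,1)$ as soon as $\gamma>1$; for $\gamma-1<\sigma<1$ your route does not produce the stated estimate. The fix, which is what the paper does, is never to take the H\"older seminorm of $m_\pm-1$ alone: write $\tau D\,(m_\pm-1)=D\cdot\tau(m_\pm-1)$, estimate the term carrying $\|\tau(m_\pm-1)\|_{C^{0}}$ by \eqref{eq.2k0}, and close a Gronwall/Volterra loop directly in the weighted norm $h^\sigma(x)=\langle x_+\rangle^{\gamma-\sigma}\langle x_-\rangle^{-1-\sigma}\|\tau(m_+(x,\cdot)-1)\|_{C^{0,\sigma}(\mathbb{C}_+)}$, so that the H\"older seminorm of $\tau(m_\pm-1)$ is the unknown of the fixed-point argument rather than an input borrowed from (c). (The restricted range $\sigma\leq\gamma-1$ happens to be all that is used elsewhere in the paper, but it does not prove (d) as stated.)
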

\begin{proof}
	We can fix for determinacy the sign $+$ in the left sides of the inequalities \eqref{eq.1k0}-\eqref{eq.4k0}, since the arguments are similar for the term $m_-$.
	
	We start proving the \eqref{eq.1k0}. The right side of \eqref{eq.1k0} suggests to consider the quantity
	\begin{equation*}
	v(x,\tau)=\frac{\langle x_+\rangle^{\gamma-1}}{\langle x_-\rangle}|m_+(x,\tau)-1|.
	\end{equation*}
	We plan to use the integral equation for $m_+(x,\tau)$ and to
	check inequality of type
	\begin{equation}\label{eq.in40b}
	v(x) \leq a(x) + \int_x^\infty b(t) v(t) dt,
	\end{equation}
	where $b \in L^1(\rone).$
	Applying for $v(x)$  a Gronwall type inequality (see  Lemma \ref{l.a2.1rone} for the precise statement), we can derive apriori bound $v(x) \leq C (a(x),\|b\|_{L^1(\rone)}).$
	
	The relations
	\begin{equation}\label{eq.intmp}
	m_+(x,\tau)-1= \int_x^{+\infty}D(t-x,\tau)V(t)m_+(t,\tau) dt,
	\end{equation}
	\begin{equation*}
	|D(t-x,\tau)|\leq C\langle t-x\rangle\leq C(\langle t\rangle+\langle x_-\rangle),
	\end{equation*}
	imply the following estimate
	\begin{equation*}
	v(x,\tau)\leq C\int_x^{+\infty}\frac{\langle x_+\rangle^{\gamma-1}}{\langle x_-\rangle}\frac{\langle t-x\rangle }{\langle t \rangle^\gamma}\langle t\rangle^\gamma|V(t)|\left(|m_+(t,\tau)-1|+1\right)\,d\tau.
	\end{equation*}
	We set\footnote{To prove that the quantity above are finite, we consider three different cases: $x<t<0$, $0<x<t$, $x<0<t$ separately. In the last case, we distinguish the behaviour if $x\approx t$, $|x|<<|t|$ and $|t|<<|x|$. }
	\begin{equation*}
	c_1=\sup_{t\geq x}\frac{\langle x_+\rangle^{\gamma-1}\langle t-x \rangle \langle t_-\rangle }{\langle x_-\rangle \langle t\rangle^\gamma \langle t_+\rangle ^{\gamma}}\in \rone_+,\ \gamma\geq 1,
	\end{equation*}
	\begin{equation*}
	c_2=\sup_{t\geq x}\frac{\langle x_+\rangle^{\gamma-1}\langle t-x \rangle }{\langle x_-\rangle \langle t\rangle^\gamma}\in \rone_+,\ \gamma\geq 1
	\end{equation*}
	and we deduce that
	\begin{equation*}
	v(x,\tau)\leq c_1\int_{x}^{+\infty}\langle t\rangle^\gamma |V(t)|v(t,\tau)\,dt+c_2\|V\|_{L^1_\gamma(\rone)}.
	\end{equation*}
	Now applying the Gronwall argument of Lemma \ref{l.a2.1rone} mentioned above, we find the \eqref{eq.1k0}.
	
	We follow the same idea to prove the other inequalities.
	
	Indeed, to get the \eqref{eq.2k0} we define
	\begin{equation}
	u(x,\tau)=|\tau|\frac{\langle x_+\rangle^\gamma}{\langle x_-\rangle }|m_+(x,\tau)-1|.
	\end{equation}
	This time we quote the estimates
	\begin{equation}\label{inv2}
	|D(t-x,\tau)|\leq C\min\left(\langle t-x \rangle ,\frac{1}{|\tau|}\right).
	\end{equation}
	Hence, by the integral equation \eqref{eq.intmp} and the estimates above follows
	\begin{align*}
	u(x,\tau)\leq& \int_x^{+\infty}\frac{\langle x_+\rangle^\gamma \langle t_-\rangle }{\langle x_-\rangle \langle t_+\rangle ^\gamma}|D(t-x,\tau)||V(t)|u(t,\tau)\,d\tau\\
	& \,\,\,+\int_x^{+\infty}\frac{\langle x_+\rangle^\gamma  }{\langle x_-\rangle  }|\tau||D(t-x,\tau)||V(t)|\,d\tau.
	\end{align*}
	As before\footnote{One can see footnote {$3$}} we can set
	\begin{equation*}
	c_1=\sup_{t\geq x}\frac{\langle x_+\rangle^{\gamma}\langle t-x \rangle \langle t_-\rangle }{\langle x_-\rangle \langle t\rangle^\gamma \langle t_+\rangle ^{\gamma}}\in \rone_+,\ \gamma\geq 1,
	\end{equation*}
	\begin{equation*}
	c_2=\sup_{t\geq x}\frac{\langle x_+\rangle^{\gamma} }{\langle x_-\rangle \langle t\rangle^\gamma}\in \rone_+,\ \gamma\geq 1
	\end{equation*}
	and via Gronwall argument we get $u(x,\tau)\leq C(\|V\|_{L^\gamma(\rone)})$, i.e. \eqref{eq.2k0}.
	
	Similarly to get the \eqref{eq.3k0} we put
	\begin{equation*}
	g^\sigma(x,\tau_1,\tau_2)=\frac{\langle x_+\rangle ^{\gamma-1-\sigma}}{\langle x_-\rangle^{\sigma+1}}\frac{|m_+(x,\tau_1)-m_+(x,\tau_2)|}{|\tau_1-\tau_2|^\sigma}
	\end{equation*}
	and by the estimate
	\begin{equation*}
	\frac{|D(t,\tau_1)-D(t,\tau_2)|}{|\tau_1-\tau_2|^\sigma}\leq C\langle t-x\rangle^{1+\sigma}\leq C(\langle t\rangle^{1+\sigma}+\langle x_-\rangle^{1+\sigma}), \ \sigma\in(0,1)
	\end{equation*}
	we get
	\begin{align*}
	g^\sigma(x,\tau_1,\tau_2)\leq& \int_x^{+\infty}\frac{\langle x_+\rangle ^{\gamma-1-\sigma}\langle t-x\rangle^{1+\sigma}}{\langle x_-\rangle^{\sigma+1}}|V(t)||m_+(t,\tau_1)|\,dt +\\
	&\,\,\, +\int_x^{+\infty}\frac{\langle x_+\rangle ^{\gamma-1-\sigma}\langle t-x\rangle \langle t_-\rangle^{\sigma+1}}{\langle x_-\rangle^{\sigma+1}\langle t_+\rangle^{\gamma-1-\sigma}}|V(t)|g^\sigma(t,\tau_1,\tau_2)\,dt.
	\end{align*}
	Moreover, we can estimate $|m_+(t,\tau_1)|$ with \eqref{eq.1k0}. \\
	If we consider $1<\gamma<2$ and $\sigma\leq \gamma-1$ or $\gamma\geq 2$ and $\sigma\in (0,1)$, we have that the following quantities are finite\footnote{One can see the footnote {${3}$}}
	\begin{equation*}
	c_1=\sup_{t\geq x}\frac{\langle x_+\rangle^{\gamma-1-\sigma}\langle t-x \rangle^{\gamma-1-\sigma}  }{\langle x_-\rangle^{1+\sigma} \langle t\rangle^\gamma }\in \rone_+,
	\end{equation*}
	\begin{equation*}
	c_2=\sup_{t\geq x}\frac{\langle x_+\rangle^{\gamma-1-\sigma}\langle t-x \rangle \langle t_-\rangle^{1+\sigma}  }{\langle x_-\rangle^{1+\sigma} \langle t\rangle^\gamma\langle t_+\rangle^{\gamma-1-\sigma} }\in \rone_+,
	\end{equation*}
	\begin{equation*}
	c_3=\sup_{t\geq x}\frac{\langle x_+\rangle^{\gamma-1-\sigma}\langle t-x \rangle^{1+\sigma} \langle t_-\rangle  }{\langle x_-\rangle^{1+\sigma} \langle t\rangle^\gamma\langle t_+\rangle^{\gamma-1} }\in \rone_+.
	\end{equation*}
	Then we have $g^\sigma(x,\tau_1,\tau_2)\leq C(\|V\|_{L^1_\gamma(\rone)})$.

Finally we prove the inequality \eqref{eq.4k0} for any $ \sigma \in (0, 1) $. We rewrite \eqref{eq.intmp} as
\begin{eqnarray}\label{eq.Dv1}
	\tau \left( m_+(x,\tau)-1\right) & = & \tau \int_x^{+\infty}D(t-x,\tau)V(t) dt + \\ \nonumber
 & + &\int_x^{+\infty}\tau D(t-x,\tau)V(t)\left( m_+(t,\tau)-1\right) dt.
	\end{eqnarray}
Setting now
 \begin{equation*}
	h^\sigma(x,\tau)= \frac{\langle x_+\rangle ^{\gamma-\sigma}}{\langle x_-\rangle^{\sigma+1}} \left\|\tau (m_+(x,\tau )-1)\right\|_{C^{0,\sigma}(\mathbb{C}_+)},
	\end{equation*}
we can use the inequality
$$ \|f g \|_{C^{0,\sigma}} \leq C \left( \|f \|_{C^{0,\sigma}} \|g \|_{C^{0}} + \|f \|_{C^{0}} \|g \|_{C^{0,\sigma}} \right) $$
and arrive at the estimate
$$ h^\sigma(x,\tau) \leq  \underbrace{ \int_x^\infty\frac{\langle x_+ \rangle^{\gamma - \sigma}}{\langle x_- \rangle^{1+\sigma}} \left\| \tau D(t-x,\tau)\right\|_{C^{0,\sigma}_\tau(\mathbb{C}_+)} |V(t)| dt}_{I(x)} + $$ $$ +\underbrace{ \int_x^\infty \frac{\langle x_+ \rangle^{\gamma - \sigma}}{\langle x_- \rangle^{1+\sigma}}\left\| \tau D(t-x,\tau)\right\|_{C^{0,\sigma}_\tau(\mathbb{C}_+)} |V(t)| \|m_+(t,\tau)-1 \|_{C^0_\tau(\mathbb{C}_+)} dt}_{II(x)}+$$
$$ + \underbrace{\frac{\langle x_+ \rangle^{\gamma - \sigma}}{\langle x_- \rangle^{1+\sigma}} \int_x^\infty \left\|  D(t-x,\tau)\right\|_{C^{0}_\tau(\mathbb{C}_+)} \frac{|V(t)| \langle t_- \rangle^{\sigma + 1} h^\sigma(t.\tau) dt}{\langle t_+ \rangle^{\gamma -\sigma}}}_{III(x)} $$
	We quote the inequalities
	\begin{equation}\label{inw1}
	\|\tau^{1-k} D(t-x,\tau)\|_{C^{0,\sigma}(\mathbb{C}_+)}\leq C\langle t-x\rangle^{k+\sigma} ,\ \ k=0,1, \ \sigma \in [0,1).
	\end{equation}
For the term $I(x)$ we use the estimate \eqref{inw1} with $k=0$ and note that
\begin{equation*}
	c_1=\sup_{t\geq x}\frac{\langle x_+\rangle^{\gamma-\sigma}\langle t-x \rangle^{\sigma}  }{\langle x_-\rangle^{1+\sigma} \langle t\rangle^\gamma }< \infty,
	\end{equation*}
for\footnote{the only case, when $\sigma \leq \gamma$ is necessary is the case $x < 0 < t$, $|x| \ll |t|$} $0 \leq \sigma \leq \gamma$, $\gamma \geq 1.$
Hence,
$$ I(x) \leq c_1 \|V\|_{L^1_\gamma(\rone)}.$$
In a similar way, for $II(x)$ we use the estimate \eqref{inw1} with $k=0$ combined with \eqref{eq.1k0} and using the estimate
\begin{equation*}
	c_2=\sup_{t\geq x}\frac{\langle x_+\rangle^{\gamma-\sigma}\langle t-x \rangle^{\sigma} \langle t_-\rangle  }{\langle x_-\rangle^{1+\sigma} \langle t\rangle^\gamma\langle t_+\rangle^{\gamma-1} }\in \rone_+,
	\end{equation*}
for $0 \leq \sigma < 1$, $\gamma \geq 1,$
we
arrive at
$$ II(x) \leq c_2\|V\|_{L^1_\gamma(\rone)}.$$
Finally, for $III(x)$ we use $\left\|  D(t-x,\tau)\right\|_{C^{0}_\tau(\mathbb{C}_+)}  \leq C\langle t-x \rangle$ and from
\begin{equation*}
	c_3=\sup_{t\geq x}\frac{\langle x_+\rangle^{\gamma-\sigma}\langle t-x \rangle^{} \langle t_-\rangle^{1+\sigma}  }{\langle x_-\rangle^{1+\sigma} \langle t\rangle^\gamma\langle t_+\rangle^{\gamma-\sigma} } < \infty,
	\end{equation*}
we deduce
$$ III(x) \leq c_3\int_x^\infty \langle t \rangle^\gamma |V(t)| h^\sigma(t,\tau) dt.$$
So, the application of the Gronwall argument implies $h^\sigma(x,\tau)\leq C$ and the estimate \eqref{eq.4k0} is established. This complete the proof.
\end{proof}
Similarly, if we require more decay for the potential, we can get estimates also for the quantity $\partial_\tau^{k}(m_\pm(x,\tau)-1)$. In particular we have the following Lemma.
\begin{lem}
\label{lem:JostIM} Suppose  $V \in L^{1}_\gamma(\rone)$ with $\gamma \geq 1$. Then  we have the following properties:
\begin{enumerate}[noitemsep,label=\alph*)]
   \item  If $\gamma\geq 2$, then for any integer $k$,  $1 \leq k \leq \gamma -1$ the function in \eqref{eq.Jo1} is  $ C^k (\overline{\cone_\pm}  )$ and there exists a constant $C>0$
such that for any $x \in \rone$ and $\tau \in  (\overline{\cone_\pm}  ) $ we have
   \begin{equation}\label{eq.JImpr01}
  \left| \partial_{\tau}^k \left(  m_\pm(x, \tau ) - 1\right) \right| \le  C \frac{ \langle x_\mp \rangle^{1+k}}{\langle x_\pm \rangle^{\gamma-1-k}};
\end{equation}
\item For any integer $k$, $1\leq k\leq \gamma,$ then the function  in \eqref{eq.Jo1} is $ C^k (\overline{\cone_\pm}  )$ and there exists a constant $C>0$
such that for any $x \in \rone$ and $\tau \in  (\overline{\cone_\pm}\smallsetminus\{0\}  ) $ we have
  \begin{equation}\label{eq.JImpr02}
  \left| \partial_{\tau}^k \left(  m_\pm(x, \tau ) - 1\right) \right| \le  C \frac{ \langle x_\mp \rangle^{1+k}}{\langle x_\pm \rangle^{\gamma-k}|\tau|};
\end{equation}
\item  If $\gamma> 2$, then for any integer $k$,  $1 \leq k \leq \gamma -1$ and for any $\sigma\in(0,1)$ such that $0\leq \sigma \leq \gamma-1-k$, there exists a constant $C>0$
such that for any $x \in \rone$ we have
\begin{equation}\label{eq.JImpr03}
\left\|  m_\pm(x, \tau ) - 1 \right\|_{C^{k,\sigma}(\mathbb{C}_\pm)} \le  C \frac{ \langle x_\mp \rangle^{1+k+\sigma}}{\langle x_\pm \rangle^{\gamma-1-k-\sigma}};
\end{equation}
\item  If $\gamma> 1$, then for any integer $k$,  $1 \leq k \leq \gamma $ and for any $\sigma\in(0,1)$ such that $0\leq \sigma \leq \gamma-k$, there exists a constant $C>0$ such that for any $x \in \rone$ we have
\begin{equation}\label{eq.JImpr04}
\left\|  \tau  (m_\pm(x, \tau ) - 1) \right\|_{C^{k,\sigma}(\mathbb{C}_\pm)} \le  C \frac{ \langle x_\mp \rangle^{1+k+\sigma}}{\langle x_\pm \rangle^{\gamma-k-\sigma}}.
\end{equation}
 \end{enumerate}
    \end{lem}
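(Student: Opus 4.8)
The proof will follow, almost verbatim, the Gronwall scheme already used for Lemma~\ref{lem:Jostk0}, organised as an induction on the order $k$ of the $\tau$-derivative, the base case $k=0$ being exactly Lemma~\ref{lem:Jostk0}. Differentiating the Marchenko equation \eqref{eq.intmp} $k$ times in $\tau$ and using the Leibniz rule gives
\[
\partial_\tau^k\bigl(m_+(x,\tau)-1\bigr)=\sum_{j=0}^{k}\binom{k}{j}\int_x^{+\infty}\partial_\tau^{\,k-j}D(t-x,\tau)\,V(t)\,\partial_\tau^{\,j}m_+(t,\tau)\,dt .
\]
Since $\partial_\tau^{\,j}m_+=\partial_\tau^{\,j}(m_+-1)$ for $j\geq1$, the only term carrying the top-order derivative of the unknown is the $j=k$ term, which will feed the Gronwall inequality; in every term with $j<k$ the factor $\partial_\tau^{\,j}m_+$ is either $m_+$ itself (use \eqref{eq.1k0} after splitting $m_+=(m_+-1)+1$) or a lower-order derivative already controlled by the inductive hypothesis, so these terms go into the source.

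The only new analytic input is the family of kernel bounds obtained by differentiating $D(t,\tau)=\int_0^t e^{2iy\tau}\,dy$ under the integral sign:
\[
\bigl|\partial_\tau^{\,\ell}D(t-x,\tau)\bigr|\leq C\,\langle t-x\rangle^{\ell}\min\!\Bigl(\langle t-x\rangle,\tfrac1{|\tau|}\Bigr),\qquad\bigl\|\tau^{1-m}\partial_\tau^{\,\ell}D(t-x,\tau)\bigr\|_{C^{0,\sigma}(\mathbb{C}_+)}\leq C\,\langle t-x\rangle^{\ell+m+\sigma},
\]
the second alternative of the first bound coming from one integration by parts, and the second bound generalising \eqref{inw1}. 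Inserting these into the Leibniz formula, bounding $\langle t-x\rangle\leq C(\langle t\rangle+\langle x_-\rangle)$ and distributing the powers, the inductive step for (a) reduces to showing that
\[
v_k(x,\tau)=\frac{\langle x_+\rangle^{\gamma-1-k}}{\langle x_-\rangle^{1+k}}\,\bigl|\partial_\tau^{\,k}(m_+(x,\tau)-1)\bigr|
\]
satisfies an inequality of the form \eqref{eq.in40b}, namely $v_k(x)\leq a_k(x)+\int_x^{+\infty}\langle t\rangle^{\gamma}|V(t)|\,v_k(t)\,dt$ with $a_k$ bounded by a constant depending only on $\|V\|_{L^1_\gamma(\rone)}$ and the constants from the lower orders; the Gronwall estimate of Lemma~\ref{l.a2.1rone} then gives \eqref{eq.JImpr01}. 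Parts (b)--(d) use the same device with the appropriate weights: for (b) one works with $|\tau|\,\langle x_+\rangle^{\gamma-k}\langle x_-\rangle^{-1-k}\,|\partial_\tau^{\,k}(m_+-1)|$ and the $1/|\tau|$ alternative of the kernel bound, splitting off---exactly as in the passage from \eqref{eq.intmp} to \eqref{eq.Dv1}---the ``free'' term obtained by replacing $m_+$ by $1$, in which the weight $|\tau|$ is absorbed by $|\tau|\,|\partial_\tau^{\,k}D|\leq C\langle t-x\rangle^{k}$; for (c) and (d) one superposes the Hölder-difference trick of parts (c)--(d) of Lemma~\ref{lem:Jostk0}, working with
\[
g_k^{\sigma}(x,\tau_1,\tau_2)=\frac{\langle x_+\rangle^{\gamma-1-k-\sigma}}{\langle x_-\rangle^{1+k+\sigma}}\,\frac{\bigl|\partial_\tau^{\,k}m_+(x,\tau_1)-\partial_\tau^{\,k}m_+(x,\tau_2)\bigr|}{|\tau_1-\tau_2|^{\sigma}},
\]
differentiating the integral equation $k$ times, taking the $\sigma$-difference quotient, and using $\|fg\|_{C^{0,\sigma}}\leq C\bigl(\|f\|_{C^{0,\sigma}}\|g\|_{C^{0}}+\|f\|_{C^{0}}\|g\|_{C^{0,\sigma}}\bigr)$ to reach once more a Gronwall inequality; (d) follows from (c) by first multiplying the differentiated equation by $\tau$, in the spirit of \eqref{eq.Dv1}.

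The real work---and the only genuine obstacle---is bookkeeping: the Leibniz expansion produces $O(k)$ terms, and for each of them one must verify that the associated supremum of a ratio of Japanese brackets, schematically of the form
\[
\sup_{t\geq x}\frac{\langle x_+\rangle^{A}\,\langle t-x\rangle^{a}\,\langle t_-\rangle^{b}}{\langle x_-\rangle^{B}\,\langle t\rangle^{\gamma}\,\langle t_+\rangle^{c}}<\infty
\]
with exponents dictated by the term in question, is finite in the stated range of $k$, $\sigma$ and $\gamma$; this is done, just as in the footnotes to the proof of Lemma~\ref{lem:Jostk0}, by separating the regimes $x<t<0$, $0<x<t$ and $x<0<t$, with the subcases $|x|\approx|t|$, $|x|\ll|t|$ and $|t|\ll|x|$ in the last. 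Once this is organised, the structural part---induction on $k$ feeding the lower-order estimates into a Gronwall inequality of type \eqref{eq.in40b}---is routine and mirrors the proof of Lemma~\ref{lem:Jostk0} line for line.
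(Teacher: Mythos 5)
Your proposal follows essentially the same route as the paper's proof: induction on $k$ with Lemma~\ref{lem:Jostk0} as the base case, differentiating the Marchenko equation via the Leibniz rule (the paper's formula \eqref{eq.in1}), bounding the differentiated kernels by $|\partial_\tau^{\ell}D|\leq C\min\{\langle t\rangle^{\ell+1},\langle t\rangle^{\ell}/|\tau|\}$ and its H\"older analogue, feeding the lower-order terms into the source of a Gronwall inequality of type \eqref{eq.in40b}, and reducing everything to the finiteness of suprema of ratios of Japanese brackets checked case by case. The paper itself only writes out part (a) and defers (b)--(d) to "the same arguments," so your sketch is at the same level of detail and is correct.
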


    \begin{proof} The proof of this Lemma follows the same spirit of the proof of the previous one.
    	
    	We prove the inequality \eqref{eq.JImpr01} fixing the sign $+$ in the left side. The arguments are similar for the others inequalities and also for the terms $m_-$.
    	
    	The right side in \eqref{eq.JImpr01} suggests us to define
    	$$ v^{(k)}(x) = \frac{\langle x_+ \rangle^{\gamma -1-k}}{\langle x_-\rangle^{k+1}}\left| \partial^k_\tau(m_+(x,\tau)-1)\right|.$$
    	We intend to prove the \eqref{eq.JImpr01}, i.e.
    	\begin{equation}\label{eq.k}
    	v^{(k)}(x) \leq C(\|V\|_{L^1_\gamma(\rone)}), \ 0\leq k\leq \gamma-1,
    	\end{equation}
    	by induction in $k$. Since the inequalities above for $k=0$ is already established in \eqref{eq.1k0}, we suppose that the inequality \eqref{eq.k} holds for any $0\leq k \leq \gamma-1$ and so our goal will be to prove that
    	\begin{equation*}
    v^{(k+1)}(x)\leq C, \ k+1\leq \gamma-1.
    	\end{equation*}
    	
    	 The key tools here will be to consider the following formula
    	 \begin{equation}\label{eq.in1}
    	 \partial_\tau^{k+1} m_+(x,\tau) = \sum_{\ell=0}^{k+1} c_{k,\ell} \int_x^\infty \partial_\tau^{k+1-\ell} D(t-x,\tau) V(t) \partial_\tau^{\ell} m_+(t,\tau) dt
    	 \end{equation}
    	 and to quote the following inequalities
    	\begin{equation*}
    	\left| \partial_\tau^k D(t,x) \right| \leq C \min\left\{ \langle t\rangle^{k+1},\frac{\langle t\rangle^k}{|\tau|}\right\}, \ k=0,1,2,\dots,\ \tau \in \mathbb{C_+}.
    	\end{equation*}
    	Then, from the boundness of the following quantities\footnote{To prove that the quantity above are finite, we consider three different cases: $x<t<0$, $0<x<t$, $x<0<t$ separately. In the last case, we distinguish the behaviour if $x\approx t$, $|x|<<|t|$ and $|t|<<|x|$. }
    \begin{equation*}
    c_1=\sup_{t\geq x}\frac{\langle x_+\rangle^{\gamma-2-k}\langle t-x \rangle^{k+2}  }{\langle x_-\rangle^{2+k}  \langle t\rangle^\gamma }\in \rone_+,
    \end{equation*}
    \begin{equation*}
    c_2=\max_{0\leq \ell \leq k+1}\sup_{t\geq x}\frac{\langle x_+\rangle^{\gamma-2-k}\langle t-x \rangle^{k+2-\ell} \langle t_-\rangle^{1+\ell} }{\langle x_-\rangle^{2+k}  \langle t\rangle^\gamma\langle t_+\rangle^{\gamma-1-\ell} }\in \rone_+,
    \end{equation*}
combined with a Gronwall argument we get \eqref{eq.JImpr01}.

We do not prove the inequalities \eqref{eq.JImpr02}, \eqref{eq.JImpr03} and \eqref{eq.JImpr04} to avoid the repetition of the same arguments. We just note that for the proof of the inequalities \eqref{eq.JImpr03} and \eqref{eq.JImpr04} we use also the following estimate
	\begin{equation*}
	\|\tau^{1-k} D(t-x,\tau)\|_{C^{0,\sigma}(\mathbb{C}_+)}\leq C\langle t-x\rangle^{k+\sigma} ,\ \ k=0,1, \ \sigma \in [0,1).
	\end{equation*}
 \end{proof}

\subsection{Expansions for transmision and reflection coefficients}
The transmission coefficient
$T(\tau )$ and the reflection  coefficients
$R_\pm (\tau )$   are defined by  the formula
   \begin{equation}  \label{eq:kernel35} \begin{aligned} &    T(\tau )m_\mp (x ,\tau )= R_\pm (\tau )e^{\pm 2\im \tau x }m_\pm (x,\tau )+   m_\pm (x,-\tau ).
\end{aligned}
\end{equation}
 From \cite{DeiTru}  and from \cite{W} we have the following Lemma.
 \begin{lem}
\label{lem:TRcoeff} We have the following properties of the transmissions and reflection coefficients.
\begin{enumerate}[noitemsep,label=\alph*)]
   \item $T, R_\pm   \in C (\rone   )$.
   \item There exists $C_1,C_2>0$  such that:
    \begin{align} &  \label{eq:TRcoeff0}
		 |T(  \tau )-1| +| R_\pm (  \tau )  |\le C_1 \langle \tau  \rangle ^{ -1} \\&  
		\label{eq:TRcoeff}
     |T(\tau )|^2+ |R_{\pm }(\tau )|^2=1.
		\end{align}
\item If $T(0)=0,$ (i.e. zero is not a resonance point), then for some $\alpha \in \cone \setminus \{0\}$ and for some
$\alpha_+, \alpha_- \in \cone$
\begin{equation}\label{eq.TRa}
   T(\tau) = \alpha \tau + o(\tau), \  1+R_\pm(\tau) = \alpha_\pm \tau + o(\tau),
\end{equation}
for $\tau \in \rone,$ $ \tau \rightarrow 0.$
\end{enumerate}
\end{lem}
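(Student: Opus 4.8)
The plan is to derive $T$ and $R_\pm$ from the Jost solutions and then reduce every assertion to the elementary bounds on the modified Jost functions $m_\pm$ collected in Lemmas~\ref{lem:Jost} and~\ref{lem:Jostk0}. First I would record the standard integral representations: letting $x\to+\infty$ in the Marchenko equations \eqref{eq.Igra1n}, using $D(t,\tau)=(e^{2it\tau}-1)/(2i\tau)$ from \eqref{eq.AE3}, and matching the resulting asymptotics of $f_\pm(x,\tau)=e^{\pm i\tau x}m_\pm(x,\tau)$ against the scattering relation \eqref{eq:kernel35}, one obtains
\begin{equation*}
\frac{1}{T(\tau)}=1-\frac{1}{2i\tau}\int_{\mathbb{R}}V(y)m_\pm(y,\tau)\,dy,\qquad \frac{R_\pm(\tau)}{T(\tau)}=\frac{1}{2i\tau}\int_{\mathbb{R}}e^{\pm 2i\tau y}V(y)m_\mp(y,\tau)\,dy ,
\end{equation*}
the two expressions for $1/T(\tau)$ agreeing because $W[f_-(\cdot,\tau),f_+(\cdot,\tau)]=2i\tau/T(\tau)$ is independent of $x$. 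Everything else follows by inserting $|m_\pm(y,\tau)-1|\le C\langle y_\mp\rangle\langle\tau\rangle^{-1}$ and $|m_\pm(y,\tau)-1|\le C\langle y_\mp\rangle/\langle y_\pm\rangle^{\gamma-1}$ into these formulas.

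For (a) and the decay in (b): since $|V(y)m_\pm(y,\tau)|\le C|V(y)|\langle y_\mp\rangle\in L^1(\mathbb{R})$ and $\tau\mapsto m_\pm(y,\tau)$ is continuous, dominated convergence makes $\tau\mapsto\int V m_\pm$ and $\tau\mapsto\int e^{\pm 2i\tau y}V m_\mp$ continuous on all of $\mathbb{R}$; a flux (Wronskian) argument shows $T(\tau)\neq0$ for real $\tau\neq0$, so $T$ and $R_\pm$ are continuous there, and continuity at $\tau=0$ follows after multiplying by $2i\tau$, since $2i\tau/T(\tau)=2i\tau-\int V m_\pm(y,\tau)\,dy$ has the finite limit $w_0:=-\int V(y)m_\pm(y,0)\,dy$. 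The bound $|T(\tau)-1|+|R_\pm(\tau)|\le C\langle\tau\rangle^{-1}$ is then immediate: for $|\tau|\ge1$ the estimate $|m_\pm-1|\le C\langle y_\mp\rangle|\tau|^{-1}$ gives $|T(\tau)^{-1}-1|\le C|\tau|^{-1}\|V\|_{L^1_1}$ and $|R_\pm(\tau)|\le C|\tau|^{-1}\|V\|_{L^1_1}$, while near $\tau=0$ both quantities are bounded. The relation $|T|^2+|R_\pm|^2=1$ I would get by conjugating \eqref{eq:kernel35}: for real $\tau$ one has $\overline{m_\pm(x,\tau)}=m_\pm(x,-\tau)$ (both solve the same Marchenko equation), hence $\overline{f_\pm(x,\tau)}=f_\pm(x,-\tau)$; computing $W[f_-(\cdot,\tau),f_-(\cdot,-\tau)]$ at $x\to-\infty$ and at $x\to+\infty$ (the latter via \eqref{eq:kernel35}) yields $2i\tau=2i\tau(1-|R_\pm|^2)/|T|^2$.

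For (c), assume zero is not a resonance, i.e.\ $T(0)=0$ in the sense of Definition~\ref{dres}. By $2i\tau/T(\tau)=2i\tau-\int V m_\pm(y,\tau)\,dy$ this forces $w_0=-\int V(y)m_\pm(y,0)\,dy\neq0$ (equivalently $W[f_-(\cdot,0),f_+(\cdot,0)]\neq0$), and writing $r(\tau):=\int V(y)\bigl(m_\pm(y,0)-m_\pm(y,\tau)\bigr)\,dy=o(1)$ by dominated convergence gives $T(\tau)=2i\tau/(w_0+r(\tau))=\alpha\tau+o(\tau)$ with $\alpha=2i/w_0\neq0$. For $R_\pm$ I would combine the two identities into
\begin{equation*}
1+R_\pm(\tau)=\frac{\,2i\tau+\int_{\mathbb{R}}(e^{\pm 2i\tau y}-1)V(y)m_\mp(y,\tau)\,dy\,}{\,2i\tau-\int_{\mathbb{R}}V(y)m_\mp(y,\tau)\,dy\,},
\end{equation*}
whose denominator tends to $-w_0\neq0$; dividing the numerator by $\tau$ and using $(e^{\pm 2i\tau y}-1)/\tau\to\pm 2iy$ together with dominated convergence identifies its limit as $2i\pm 2i\int yV(y)m_\mp(y,0)\,dy$, so that $1+R_\pm(\tau)=\alpha_\pm\tau+o(\tau)$ with $\alpha_\pm\in\mathbb{C}$ (possibly $0$).

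I expect the main obstacle to be exactly this last dominated-convergence step for $R_\pm$: the pointwise bound $|(e^{\pm 2i\tau y}-1)/\tau|\le 2|y|$ requires an integrable majorant of the form $|y|\,|V(y)|\bigl(1+\sup_{|\tau|\le1}|m_\mp(y,\tau)-1|\bigr)$, and controlling it near $+\infty$, where $m_-(y,\tau)$ grows linearly in $y$, is where the weighted integrability of $V$ and the refined Jost bounds of Lemmas~\ref{lem:Jostk0} and~\ref{lem:JostIM} must be used (equivalently, one needs Lipschitz-in-$\tau$ control of $m_\mp(y,\cdot)$ at the origin with a locally integrable first $y$-moment). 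Determining the minimal decay of $V$ for which the full linear asymptotics $1+R_\pm(\tau)=\alpha_\pm\tau+o(\tau)$ holds — rather than a weaker Hölder version $O(|\tau|^{\sigma})$ — and reconciling it with the running hypothesis $\gamma>1+1/p$ is the delicate point; by contrast $T(\tau)=\alpha\tau+o(\tau)$ needs only continuity of $m_\pm$ in $\tau$ and $V\in L^1_1$, and the non-vanishing of $\alpha$ is nothing but the non-resonance hypothesis.
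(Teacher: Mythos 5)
Your derivation is correct and is the standard one; note that the paper itself does not prove this lemma at all but simply cites Sect.~3 of \cite{DeiTru} for \eqref{eq:TRcoeff}, \eqref{eq.TRa} and Theorem 2.3 of \cite{W} for \eqref{eq:TRcoeff0}. Your integral representations are exactly the identities \eqref{eq.63a} and \eqref{eq.TRC7} that the paper records later (in the proof of Lemma \ref{lem:TRcoeffi}), so your argument makes explicit what the paper leaves to the references; the Wronskian computations for $T(\tau)\neq0$ ($\tau\neq0$) and for $|T|^2+|R_\pm|^2=1$, and the limit $2i\tau/T(\tau)\to w_0\neq0$ characterizing the non-resonant case, are all standard and sound. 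Two cosmetic slips: your exponent in the $R_\pm/T$ formula should read $e^{\mp 2i\tau y}$ to match \eqref{eq:kernel35} and \eqref{eq.TRC7} (this changes nothing in the estimates), and with your definition $w_0=-\int Vm_\pm(\cdot,0)$ the denominator $2i\tau-\int Vm_\mp$ tends to $w_0$, not $-w_0$. The caveat you raise about part (c) for $R_\pm$ is genuine and worth keeping: the dominated-convergence step needs a majorant of the form $|y|\,|V(y)|\,|m_\mp(y,\tau)|\lesssim \langle y\rangle^2|V(y)|$ on the half-line where $m_\mp$ grows linearly, i.e.\ effectively $V\in L^1_2$, which is the hypothesis of Lemma \ref{lem:Jost} and of \cite{DeiTru} but is not implied by the running assumption $\gamma>1+1/p$ when $p$ is large. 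The paper is insulated from this because in its actual estimates it only ever invokes the H\"older bounds \eqref{eq.Btr}--\eqref{eq.Btrsig}, i.e.\ $|1+R_\pm(\tau)|\le C|\tau|^{\sigma}$ with $\sigma\le\gamma-1$, rather than the full linear asymptotics; so your observation that the sharp form of \eqref{eq.TRa} for $R_\pm$ costs an extra moment of $V$ is a legitimate refinement of the statement as written.
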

In particular,  \eqref{eq:TRcoeff},  \eqref{eq.TRa} follow from Sect.3  \cite{DeiTru}
and  \eqref{eq:TRcoeff0}
follows from  Theorem 2.3  \cite{W}.

The property  c) in the last Theorem suggest the following.
\begin{defn} \label{dres} The origin is a resonance point for the hamiltonian $\mathcal{H}$ if and only if
$$T(0) \neq 0.$$
\end{defn}

We can use the  assumption $V \in L^1_\gamma(\rone),$ $\gamma \geq 1,$ to get some more precise bounds.

 \begin{lem}
\label{lem:TRcoeffi} Suppose $V \in L^1_\gamma(\rone)$ with $\gamma \geq 1$  and $T(0)=0.$ Then  for any integer $k,$ $0 \leq k \leq \gamma -1$ we have:

\begin{enumerate}[noitemsep,label=\alph*)]
   \item $T, R_\pm   \in C^k (\rone   );$
   \item  There exists $C>0$  such that for any $\tau \in \rone $ we have:
  \begin{equation}\label{eq.TRC2}
 \left| \frac {d^k}{d\tau^k } T(\tau) \right| + \left|\frac {d^k} {d\tau^k } R_\pm(\tau)\right| \leq C, \
  \end{equation}
  \begin{equation}\label{eq.TRC21}
 \left| \frac {d^k}{d\tau^k } \left[\tau \left( T(\tau)-1\right) \right] \right| + \left|\frac {d^k} {d\tau^k } \left[ \tau R_\pm(\tau)\right]\right| \leq C. \
  \end{equation}
 \end{enumerate}
\end{lem}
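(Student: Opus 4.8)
The plan is to express the scattering coefficients through the modified Jost functions evaluated at a single point and then to import the bounds of Lemmas~\ref{lem:Jostk0} and~\ref{lem:JostIM}. Recall that $f_\pm(x,\tau)=e^{\pm i\tau x}m_\pm(x,\tau)$ solve $\mathcal H u=\tau^2u$, so the Wronskians $W\big(f_+(\cdot,\tau),f_-(\cdot,\tau)\big)$ and $W\big(f_+(\cdot,-\tau),f_-(\cdot,\tau)\big)$ are independent of $x$. Evaluating them at $x=0$ — using \eqref{eq:kernel35} both at $x=0$ and after one $\partial_x$ at $x=0$, together with $\partial_x m_+(0,\tau)=\int_0^\infty e^{2i\tau t}V(t)m_+(t,\tau)\,dt$ read off from \eqref{eq.intmp} (and the symmetric identity for $m_-$) — one obtains the classical representations
$$
T(\tau)=\frac{-2i\tau}{\Delta(\tau)},\qquad R_\pm(\tau)=\frac{-\rho_\pm(\tau)}{\Delta(\tau)},
$$
where $\Delta(\tau)=m_+(0,\tau)\,\partial_x m_-(0,\tau)-\partial_x m_+(0,\tau)\,m_-(0,\tau)-2i\tau\,m_+(0,\tau)m_-(0,\tau)$ and $\rho_\pm$ is the analogous bilinear expression in $m_\pm(0,\pm\tau)$ and $\partial_x m_\pm(0,\pm\tau)$.

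The next step is the regularity statement $\Delta,\rho_\pm\in C^k(\rone)$ with $\sum_{0\le j\le k}\big(\|\partial_\tau^j\Delta\|_{L^\infty}+\|\partial_\tau^j\rho_\pm\|_{L^\infty}\big)\le C(\|V\|_{L^1_\gamma})$ for $0\le k\le\gamma-1$. For $k=0$ this is contained in Lemma~\ref{lem:Jostk0}; for $k\ge1$ (which forces $\gamma\ge2$) it follows from Lemma~\ref{lem:JostIM}(a) with the spatial variable frozen at $x=0$, together with differentiation under the integral sign in the formula for $\partial_x m_\pm(0,\cdot)$, the weight count closing because $t^jV(t)\in L^1$ for integer $j\le\gamma$ and $|\partial_\tau^\ell(m_\pm(t,\tau)-1)|\lesssim\langle t\rangle^{-(\gamma-1-\ell)}$ on the relevant half-line. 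Then comes the decisive use of the hypotheses: $\Delta(\tau)\ne0$ for every real $\tau\ne0$ (absence of embedded resonances, a standard consequence of $V\in L^1_1$, as $\Delta=W(f_+,f_-)$), while $\Delta(0)\ne0$ is exactly the non-resonance assumption $T(0)=0$ in the form of Lemma~\ref{lem:TRcoeff}(c); moreover $m_\pm(0,\tau)\to1$ and $\partial_x m_\pm(0,\tau)=O(1)$ give $\Delta(\tau)=-2i\tau+O(1)$, hence $|\Delta(\tau)|\to\infty$. A continuous function that never vanishes on $\rone$ and tends to infinity at $\infty$ is bounded below, $|\Delta(\tau)|\ge c>0$, so $1/\Delta\in C^k(\rone)$ with bounded derivatives up to order $\gamma-1$. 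Consequently $R_\pm=-\rho_\pm/\Delta$ obeys $|\partial_\tau^kR_\pm|\le C$, and $\tau R_\pm=-\tau\rho_\pm/\Delta$ is handled identically once one notes that $\tau/\Delta(\tau)=\tau/(-2i\tau+O(1))$ is bounded together with its derivatives; this proves \eqref{eq.TRC2} and \eqref{eq.TRC21} for $R_\pm$.

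It remains to control $T=-2i\tau/\Delta$, where the factor $\tau$ must be absorbed. Near $\tau=0$ one keeps the representation as is: $\Delta(0)\ne0$ makes $T$ a $C^k$ function bounded with its derivatives there, and in fact $T(\tau)=O(\tau)$ near the origin. For $|\tau|$ large one writes $T=\big(1+\varepsilon(\tau)\big)^{-1}$ with $\varepsilon(\tau)=-(\Delta(\tau)+2i\tau)/(2i\tau)$ and uses the sharpened bounds of Lemma~\ref{lem:JostIM}(b),(d) — which carry the extra factor $|\tau|^{-1}$ — to show $\partial_\tau^j\varepsilon(\tau)=O(|\tau|^{-1})$ for $0\le j\le\gamma-1$; expanding the reciprocal and differentiating then yields $|\partial_\tau^kT|\le C$ and, more precisely, $\partial_\tau^j(T-1)=O(|\tau|^{-1})$, which is exactly what makes $\frac{d^k}{d\tau^k}\big[\tau(T-1)\big]=\tau\,\partial_\tau^k(T-1)+k\,\partial_\tau^{k-1}(T-1)$ bounded. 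Patching the two regimes with a smooth cutoff completes \eqref{eq.TRC2} and \eqref{eq.TRC21}. The only genuinely delicate point is the passage to reciprocals in the second step: one must know that $\Delta$ has no real zero, which is precisely where the non-resonance condition at the origin (and the absence of positive-energy resonances for $V\in L^1_1$) enters; everything else is Leibniz-rule bookkeeping on top of the weighted Jost bounds already established.
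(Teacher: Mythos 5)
Your argument is correct and follows essentially the same route as the paper: the paper works directly from the identities $\tau/T(\tau)=\tau-\tfrac{1}{2i}\int V(t)m_+(t,\tau)\,dt$ and $R_\pm(\tau)=\tfrac{T(\tau)}{2i\tau}\int e^{\mp 2it\tau}V(t)m_\mp(t,\tau)\,dt$, which are just your Wronskian representations written out, and then invokes the weighted bounds \eqref{eq.JImpr01}--\eqref{eq.JImpr02} of Lemma \ref{lem:JostIM} exactly as you do. Your write-up is in fact more explicit than the paper's about the two points it leaves implicit, namely the lower bound on the denominator away from $\tau=0$ and the role of $T(0)=0$ (equivalently $\Delta(0)\neq 0$) at the origin.
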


\begin{proof}

The proof is based on the  relations
\begin{equation}\label{eq.63a}
    \frac{\tau}{T(\tau)} = \tau - \frac{1}{2i} \int_\rone V(t) m_+(t,\tau) dt, \ \  \tau \in  \R \setminus \{0\},
\end{equation}
  \begin{equation}\label{eq.TRC7}
    R_\pm (\tau) = \frac{T(\tau)}{2i\tau} \int_\rone e^{\mp 2it\tau} V(t) m_\mp(t,\tau) dt , \  \tau \in  \R \setminus \{0\}
  \end{equation}
    and the properties of the functions $m_\pm(t,\tau)$ from Lemma \ref{lem:JostIM}. Indeed we can write
    \begin{equation}\label{eq.Ttautau}
    \tau =T(\tau)\left(\tau - \frac{1}{2i} \int_\rone V(t) m_+(t,\tau) dt \right),
    \end{equation}
    and defer the boundness of the left sides in \eqref{eq.TRC2} and \eqref{eq.TRC21} from the relation above combined with the inequalities \eqref{eq.JImpr01} and \eqref{eq.JImpr02}.
\end{proof}
\begin{rem}
	It is easy to see that  \begin{equation}\label{eq.Btr}
	\left|\frac{T(\tau)}{\tau}\right|+\left|\frac{R_\pm(\tau)+1}{\tau}\right|\leq C
	\end{equation}
	and
	 \begin{equation}\label{eq.Btrsig}
	\left\|\frac{T(\tau)}{\tau}\right\|_{C^{0,\sigma}(\rone)}+\left\|\frac{R_\pm(\tau)+1}{\tau}\right\|_{C^{0,\sigma}(\rone)}\leq C.
	\end{equation}
	Indeed \eqref{eq.Btr} follows from relations \eqref{eq.Ttautau}, \eqref{eq.TRC7}, using the inequality \eqref{eq.1k0} and the property \eqref{eq.TRa} in Lemma \ref{lem:TRcoeff}. Similarly \eqref{eq.Btrsig} follows from relations \eqref{eq.Ttautau}, \eqref{eq.TRC7}, using the inequality \eqref{eq.3k0} and the property \eqref{eq.TRa} in Lemma \ref{lem:TRcoeff}.
\end{rem}

In the spirit of the Lemma before, we can establish the corresponding H\"older norm estimates for the transmission and the reflection coefficients.

\begin{lem}
\label{lem:TCH1} Suppose $V \in L^1_\gamma(\rone)$ with $\gamma > 1$  and $T(0)=0.$ Then  for any integer $k,$ $0 \leq k \leq \gamma -1$ and any
$\sigma \in (0,1)\cap(0,\gamma - 1-k]$ we have:
\begin{enumerate}[noitemsep,label=\alph*)]
   \item $T, R_\pm   \in C^{k,\sigma} (\rone   );$
   \item  There exists $C>0$  such that for any $\tau \in \rone $ we have:
  \begin{equation}\label{eq.TRC2h}
 \left\| \frac {d^k}{d\tau^k } T(\tau) \right\|_{C^{0,\sigma}(\rone)} + \left\|\frac {d^k} {d\tau^k } R_\pm(\tau)\right\|_{C^{0,\sigma}(\rone)} \leq C, \
  \end{equation}
  \begin{equation}\label{eq.TRC21h}
 \left\| \frac {d^k}{d\tau^k } \left[\tau \left( T(\tau)-1\right) \right] \right\|_{C^{0,\sigma}(\rone)} + \left\|\frac {d^k} {d\tau^k } \left[ \tau R_\pm(\tau)\right]\right\|_{C^{0,\sigma}(\rone)} \leq C. \
  \end{equation}
 \end{enumerate}
\end{lem}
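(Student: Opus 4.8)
The plan is to run the scheme of the proof of Lemma~\ref{lem:TRcoeffi}, now upgrading each $C^k$ bound there to a $C^{k,\sigma}$ bound by feeding in the H\"older--Jost estimates of Lemma~\ref{lem:JostIM} (and \eqref{eq.3k0} of Lemma~\ref{lem:Jostk0} when $k=0$) wherever that proof used the pointwise bounds \eqref{eq.JImpr01}--\eqref{eq.JImpr02}. Introduce
\begin{gather*}
 g(\tau)=\int_\rone V(t)\,m_+(t,\tau)\,dt,\qquad
 G_\pm(\tau)=\int_\rone e^{\mp2\im t\tau}V(t)\,m_\mp(t,\tau)\,dt,\\
 \Phi(\tau)=\tau-\tfrac1{2\im}\,g(\tau).
\end{gather*}
Then, by \eqref{eq.63a}, \eqref{eq.Ttautau} and \eqref{eq.TRC7}, for $\tau\ne0$ (and by continuity at $\tau=0$)
\begin{gather*}
 \Phi(\tau)=\frac{\tau}{T(\tau)},\qquad
 1-T(\tau)=-\frac{g(\tau)}{2\im\,\Phi(\tau)},\qquad
 \tau\bigl(T(\tau)-1\bigr)=\frac{T(\tau)\,g(\tau)}{2\im},\\
 R_\pm(\tau)=\frac{G_\pm(\tau)}{2\im\,\Phi(\tau)},\qquad
 \tau R_\pm(\tau)=\frac{T(\tau)\,G_\pm(\tau)}{2\im}.
\end{gather*}
It is therefore enough to prove that $g$, $G_\pm$ and $1/\Phi$ all belong to $C^{k,\sigma}(\rone)$ with norms bounded by $C(\|V\|_{L^1_\gamma})$; then statement a), \eqref{eq.TRC2h} and \eqref{eq.TRC21h} all follow by repeatedly applying the algebra inequality $\|fh\|_{C^{k,\sigma}}\le C\|f\|_{C^{k,\sigma}}\|h\|_{C^{k,\sigma}}$ to the five identities above, using $T=1-(1-T)$ to record $T\in C^{k,\sigma}(\rone)$ along the way.

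\emph{Step 1 (the functions $g$ and $G_\pm$).} Split $m_\mp=1+(m_\mp-1)$. The ``$1$''-parts contribute a constant for $g$ and the function $\tau\mapsto\int e^{\mp2\im t\tau}V(t)\,dt$ for $G_\pm$; for the latter one uses $\|e^{\mp2\im t\tau}\|_{C^{k,\sigma}_\tau}\le C\langle t\rangle^{k+\sigma}$ (each $\tau$-derivative produces a factor $\mp2\im t$, and $|e^{\im\theta_1}-e^{\im\theta_2}|\le2^{1-\sigma}|\theta_1-\theta_2|^\sigma$ handles the H\"older seminorm) together with $\int_\rone|V(t)|\langle t\rangle^{k+\sigma}\,dt<\infty$, valid since $k+\sigma\le\gamma$. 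For the remainder parts one estimates $\int_\rone|V(t)|\,\|m_\mp(t,\cdot)-1\|_{C^{k,\sigma}_\tau}\,dt$ and $\int_\rone|V(t)|\,\bigl\|e^{\mp2\im t\tau}(m_\mp(t,\tau)-1)\bigr\|_{C^{k,\sigma}_\tau}\,dt$: by \eqref{eq.JImpr03} (by \eqref{eq.3k0} when $k=0$) one has $\|m_\mp(t,\cdot)-1\|_{C^{k,\sigma}_\tau}\le C\langle t\rangle^{1+k+\sigma}$, and, distributing the $k$ derivatives in Leibniz's rule between the two factors, $\bigl\|e^{\mp2\im t\tau}(m_\mp(t,\tau)-1)\bigr\|_{C^{k,\sigma}_\tau}\le C\langle t\rangle^{1+k+\sigma}$ as well. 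Both integrals are then $\le C\|V\|_{L^1_\gamma}$ precisely because $\sigma\le\gamma-1-k$. Hence $g,G_\pm\in C^{k,\sigma}(\rone)$.

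\emph{Step 2 (the reciprocal $1/\Phi$).} Unitarity \eqref{eq:TRcoeff} gives $|T(\tau)|\le1$, so $|\Phi(\tau)|=|\tau|/|T(\tau)|\ge|\tau|$ for $\tau\ne0$, while $T(0)=0$ together with \eqref{eq.TRa} forces $\Phi(0)\ne0$; by continuity $|\Phi|\ge c_0>0$ on all of $\rone$. Since $g\in C^{k,\sigma}(\rone)$, the derivatives $\Phi',\dots,\Phi^{(k)}$ are bounded in $C^{0,\sigma}(\rone)$ (only $\Phi$ itself is not globally H\"older, owing to its linear term). Writing $\psi=1/\Phi$ and differentiating $\Phi\psi=1$ one gets $\psi^{(k)}=-\psi\sum_{j=1}^k\binom{k}{j}\Phi^{(j)}\psi^{(k-j)}$; an induction on $k$ — with base case $\psi\in C^{0,\sigma}(\rone)$, obtained from $\psi(\tau_1)-\psi(\tau_2)=\bigl(\Phi(\tau_2)-\Phi(\tau_1)\bigr)/\bigl(\Phi(\tau_1)\Phi(\tau_2)\bigr)$ after separating the cases $|\tau_1-\tau_2|\le1$ and $|\tau_1-\tau_2|>1$ — then yields $\psi^{(j)}\in C^{0,\sigma}(\rone)$ bounded for every $0\le j\le k$, i.e. $1/\Phi\in C^{k,\sigma}(\rone)$ with norm controlled by $c_0$ and $\|g\|_{C^{k,\sigma}}$. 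Substituting $g$, $G_\pm$ and $1/\Phi$ into the five identities of the first paragraph finishes the proof of a), \eqref{eq.TRC2h} and \eqref{eq.TRC21h}.

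The step I expect to be the main obstacle is Step~1: one has to split the $k$ derivatives between the oscillatory factor $e^{\mp2\im t\tau}$ and the Jost remainder $m_\mp(t,\tau)-1$ so that the combined power of $\langle t\rangle$ produced is exactly $1+k+\sigma$ — if all derivatives are placed on a single factor one overshoots, and the hypothesis $\sigma\le\gamma-1-k$ no longer makes the $t$-integral converge. A secondary nuisance, dealt with in Step~2, is that $\Phi$ fails to be globally H\"older on $\rone$ (its linear term has infinite $C^{0,\sigma}$ seminorm), so the reciprocal estimate has to go through the $|\tau_1-\tau_2|\lessgtr1$ dichotomy rather than through a blanket ``reciprocal of a bounded-below $C^{k,\sigma}$ function'' statement.
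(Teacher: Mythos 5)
Your proof is correct and follows exactly the route the paper intends: the paper states this lemma without proof, merely invoking ``the spirit of'' Lemma \ref{lem:TRcoeffi}, i.e.\ the identities \eqref{eq.63a}, \eqref{eq.Ttautau}, \eqref{eq.TRC7} combined with the H\"older--Jost bounds \eqref{eq.3k0} and \eqref{eq.JImpr03}, which is precisely your Steps 1--2. Your write-up supplies the details the paper omits (the Leibniz splitting in $G_\pm$ matching $\sigma\le\gamma-1-k$, and the $|\tau_1-\tau_2|\lessgtr 1$ dichotomy for $1/\Phi$), and I see no gap.
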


\section{Appendix I: Gronwall's lemma on the real line.}

In this section we shall recall first some of modifications of the classical Gronwall's inequality on  $\rone.$

\begin{lem} \label{l.a2.1rone}
If $v(x), a(x), b(x) $ are continuous non negative functions on $\rone,$ and for any real $r$ we have
\begin{equation}\label{eq.GR1}
   a(x), v(x) \in L^\infty((r,\infty)), b(x) \in L^1((r,\infty))
\end{equation}
  that satisfy the inequality
\begin{equation}\label{eq.a2.m1rone}
    v(x) \leq a(x)  + \int_x^\infty b(t) v(t) dt
\end{equation}
then we have
\begin{equation}\label{eq.a2.m3rone}
  v(x) \leq a(x) + \int_x^\infty a(t) b(t)  \exp\left(\int_x^t b(s) ds \right) dt.
\end{equation}
\end{lem}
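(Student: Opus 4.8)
The plan is to convert the integral inequality \eqref{eq.a2.m1rone} into a linear first-order differential inequality for the tail of the integral term and resolve it with an integrating factor. First I would set
$$ R(x) = \int_x^\infty b(t)\,v(t)\,dt . $$
By \eqref{eq.GR1}, for $x \ge r$ one has $0 \le R(x) \le \|v\|_{L^\infty((r,\infty))}\int_x^\infty b(t)\,dt < \infty$, and since $\int_x^\infty b \to 0$ as $x \to +\infty$ this also shows $R(x) \to 0$ at $+\infty$. Since $b$ and $v$ are continuous, so is $t \mapsto b(t)v(t)$, hence $R \in C^1(\rone)$ with $R'(x) = -b(x)v(x)$.

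Next I would reinsert \eqref{eq.a2.m1rone}. Because $b \ge 0$, the bound $v(x) \le a(x) + R(x)$ gives $b(x)v(x) \le a(x)b(x) + b(x)R(x)$, hence the differential inequality
$$ R'(x) + b(x)R(x) \ge -a(x)b(x). $$
Putting $B(x) = \int_x^\infty b(s)\,ds$, we have $B'(x) = -b(x)$, $B(x) \to 0$ as $x \to +\infty$, and $\frac{d}{dx}e^{-B(x)} = b(x)e^{-B(x)}$, so multiplying by $e^{-B(x)} > 0$ gives $\frac{d}{dx}\big(R(x)e^{-B(x)}\big) \ge -a(x)b(x)e^{-B(x)}$. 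Integrating over $(x,N)$ and letting $N \to +\infty$ (so that $R(N)e^{-B(N)} \to 0$) yields $-R(x)e^{-B(x)} \ge -\int_x^\infty a(t)b(t)e^{-B(t)}\,dt$, i.e.
$$ R(x) \le \int_x^\infty a(t)\,b(t)\,e^{B(x)-B(t)}\,dt = \int_x^\infty a(t)\,b(t)\exp\!\Big(\int_x^t b(s)\,ds\Big)\,dt, $$
using $B(x)-B(t) = \int_x^t b$. Combined with $v(x) \le a(x) + R(x)$, this is precisely \eqref{eq.a2.m3rone}.

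An alternative, avoiding differentiation, is to iterate \eqref{eq.a2.m1rone}: substituting the bound for $v$ into the integral $n$ times produces $v(x) \le a(x) + \sum_{j=1}^{n}\int_{x<t_1<\cdots<t_j} b(t_1)\cdots b(t_j)\,a(t_j)\,dt + \rho_n(x)$, where integrating out $t_1,\dots,t_{j-1}$ turns the $j$-th term into $\frac1{(j-1)!}\int_x^\infty a(t)b(t)\big(\int_x^t b\big)^{j-1}\,dt$, and $0 \le \rho_n(x) \le \|v\|_{L^\infty((x,\infty))}\frac1{n!}\big(\int_x^\infty b\big)^n \to 0$; summing $\sum_{j\ge1}\frac1{(j-1)!}(\int_x^t b)^{j-1} = \exp(\int_x^t b)$ again gives \eqref{eq.a2.m3rone}. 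There is no real analytic obstacle; the only care needed is the finiteness and vanishing at $+\infty$ of the tail integrals — exactly where \eqref{eq.GR1} enters — and the orientation of the integrating factor, since the integral in \eqref{eq.a2.m1rone} runs from $x$ to $+\infty$ rather than in the usual forward direction.
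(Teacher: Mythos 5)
Your argument is correct and is essentially the paper's own proof: you introduce the same tail function $R(x)=\int_x^\infty b\,v$ (the paper calls it $\varphi$), derive the same differential inequality, use the same integrating factor $e^{-B(x)}$ with $B(x)=\int_x^\infty b$, and pass to the limit at $+\infty$ using exactly the hypotheses \eqref{eq.GR1}. The iteration/Picard-series variant you sketch at the end is a valid alternative but is not needed; the main line of your proposal matches the paper's.
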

\begin{proof} We shall sketch the proof for completeness.
Set
$$\varphi(x) = \int_x^\infty b(t) v(t) dt .$$
The function is well-defined and $C^1$ due to the assumption \eqref{eq.GR1}.
Then $$ \varphi^\prime(x) = - b(x) v(x) \geq -b(x)(\varphi(x) + a(x) )$$
and
$$ \left( e^{-B(x)} \varphi(x) \right)^\prime \geq - e^{-B(x)}b(x) a(x)$$
with $B(x) = \int_x^\infty b(t)dt.$
Integrating this inequality in the interval $(x, R)$, we get
$$ \varphi(x) \leq e^{B(x)-B(R)} \varphi(R) + \int_x^R e^{B(x)-B(t)} a(t)b(t) dt.$$
Using again the assumption \eqref{eq.GR1}, we see that $$ \lim_{R \nearrow \infty} B(R) = 0, \ \lim_{R \nearrow \infty} \varphi(R) = 0 $$
so we get
$$ \varphi(x) \leq \int_x^\infty e^{B(x)-B(t)} a(t)b(t) dt.$$
Then \eqref{eq.a2.m1rone} implies $v(x) \leq a(x) + \varphi(x)$ and we arrive at \eqref{eq.a2.m3rone}.
This completes the proof.

\end{proof}

\begin{cor} \label{c.a2.1rone}
If $a(x)$ is a continuous $L^\infty(\R)$ function, such that
$$ a(x) = \left\{
            \begin{array}{ll}
              C>0, & \hbox{for $x \leq 0$;} \\
             \mbox{decreasing positive function }, & \hbox{for $x >0,$}
            \end{array}
          \right.
$$
and $b \in L^1(\R),$ then the inequality \eqref{eq.a2.m1rone} implies
$$ v(x) \leq C a(x_+), \ \ x_+ = \max(0,x).$$
\end{cor}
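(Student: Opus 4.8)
The plan is to feed the hypothesis directly into Lemma \ref{l.a2.1rone} and then exploit the two structural features of $a$: it is globally bounded by $C$, and it is non-increasing on $(0,\infty)$.

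First I would apply the conclusion \eqref{eq.a2.m3rone} of Lemma \ref{l.a2.1rone}, which under the assumption \eqref{eq.a2.m1rone} gives
\begin{equation*}
v(x) \leq a(x) + \int_x^\infty a(t)\, b(t)\, \exp\left(\int_x^t b(s)\,ds\right) dt .
\end{equation*}
Since we are in the setting of Lemma \ref{l.a2.1rone}, $b$ is non-negative and lies in $L^1(\R)$, so for every $t \geq x$ one has $\int_x^t b(s)\,ds \leq \|b\|_{L^1(\R)}$ and hence the exponential factor is at most $C_0 := e^{\|b\|_{L^1(\R)}}$. This yields
\begin{equation*}
v(x) \leq a(x) + C_0 \int_x^\infty a(t)\, b(t)\,dt .
\end{equation*}

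Next I would split into the cases $x > 0$ and $x \leq 0$. For $x > 0$, monotonicity of $a$ on $(0,\infty)$ gives $a(t) \leq a(x)$ for all $t \geq x$, so $\int_x^\infty a(t) b(t)\,dt \leq a(x)\,\|b\|_{L^1(\R)}$ and therefore $v(x) \leq (1 + C_0\|b\|_{L^1(\R)})\, a(x) = (1 + C_0\|b\|_{L^1(\R)})\, a(x_+)$. For $x \leq 0$, continuity of $a$ at the origin together with monotonicity on $(0,\infty)$ shows $a(t) \leq a(0) = C$ for every $t \in \R$, hence $\int_x^\infty a(t) b(t)\,dt \leq C\,\|b\|_{L^1(\R)}$; since $a(x) = C = a(x_+)$ for $x \leq 0$, we again obtain $v(x) \leq (1 + C_0\|b\|_{L^1(\R)})\, a(x_+)$. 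Combining the two cases gives the asserted bound with a constant $1 + e^{\|b\|_{L^1(\R)}}\|b\|_{L^1(\R)}$ that depends only on $\|b\|_{L^1(\R)}$.

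I do not expect any serious obstacle here: the argument is essentially a one-line application of Lemma \ref{l.a2.1rone} followed by elementary estimates. The only points deserving a little care are that the resulting constant must be independent of the particular profile $a$ (it depends solely on $\|b\|_{L^1(\R)}$), and that the factor $a(x)$ can legitimately be pulled out of the tail integral $\int_x^\infty$ precisely because $a$ is non-increasing on $(0,\infty)$ — which is the reason the conclusion is stated with $a(x_+)$.
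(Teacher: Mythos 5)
Your proof is correct: the paper states Corollary \ref{c.a2.1rone} without proof, and your argument is exactly the intended derivation — substitute into the conclusion \eqref{eq.a2.m3rone} of Lemma \ref{l.a2.1rone}, bound the exponential by $e^{\|b\|_{L^1(\R)}}$, and use that $a$ is constant on $(-\infty,0]$ and non-increasing on $(0,\infty)$ to pull $a(x_+)$ out of the tail integral. The resulting constant $1+e^{\|b\|_{L^1(\R)}}\|b\|_{L^1(\R)}$ depends only on $\|b\|_{L^1(\R)}$, as required.
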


\bibliographystyle{amsplain}

\begin{thebibliography}{10}




\bibitem{AY} { G.\,Artbazar, K.\,Yajima,} \emph{The $L^p$ - continuity of wave operators for one dimensional Schr\"odinger operators}, J. Math. Sci. Univ. Tokyo  ${\bf 7(2)}:$  $ 221 - 240,$ $(2000).$



\bibitem{CGV}S.\,Cuccagna, V.\,Georgiev, N.\,Visciglia, \emph{Decay and scattering of small solutions of pure power NLS in ${\rone}$ with $p > 3$ and with a potential}, Comm. Pure Appl. Math. ${\bf 67(6)}:$  $ 957 - 981,$ $ (2014).$


\bibitem{DF06}P.\,D'Ancona, L.\,Fanelli, \emph{$L^p$-Boundedness of the Wave Operator for the One Dimensional Schr\"odinger Operator},
Commun. Math. Phys. $\textbf{268}:$ $415 - 438$, $(2006)$.

\bibitem{DeiTru} {P.\,Deift, E.\,Trubowitz,} \emph{Inverse scattering on the line}, Comm. Pure Appl. Math. ${\bf 32}:$ $ 121 - 251,$ $(1979).$




\bibitem{GG}  V.Georgiev, A.\,R.\,Giammetta, \emph{Sectorial Hamiltonians without zero resonance in one dimension,} accepted Contemporary Mathematics (Proceedings), $(2014)$.








\bibitem{H} {D.\,Henry,} \emph{Geometric Theory of Semilinear Parabolic Equations}, Series: Lecture Notes in Mathematics, Vol. 840, $(1981)$.

\bibitem{HII} L.\,H\"ormander, \emph{The Analysis of Linear Partial Differential Operators}, \emph{vol. II
Differential Operators with Constant Coefficients, Springer}, Berlin, $(2005)$,







\bibitem{LPh64} P.\,Lax, R.\,Philips, \emph{Scattering theory}, Bull. Amer. Math. Soc. $\textbf{70 (1)}:$ 130-142 $(1964)$.


\bibitem{RSI78} M.\,Reed, B.\,Simon, \emph{Methods of Modern Mathematical Physics}, Vol. III: Scattering Theory, Academic Press, $(1978)$.








\bibitem{W0} {R.\,Weder,} \emph{The $W_{k;p}$  - Continuity of the Schr\"odinger Wave Operators
on the Line}, Commun. Math. Phys. ${\bf 208}:$  $
507 - 520,$ $ (1999).$

\bibitem{W} {R.\,Weder}, \emph{$L^p - L^{p^\prime}$ Estimates for the Schr\"odinger Equation on the
Line and Inverse Scattering for the Nonlinear Schr\"odinger Equation with a Potential}, Journal of Functional Analysis
 ${\bf 170}:$ $
37 - 68,$ $ (2000).$

\bibitem{Y95} K.\,Yajima, \emph{The $W^{k;p}$-continuity of wave operators for Schr¨odinger operators,} J. Math.
Soc. Japan., $\textbf{47}:$ $551 - 581$, $(1995)$.




\bibitem{Ze10} A.\,Zheng, \emph{Spectral multipliers for Schr\"odinger operators,} Illinois Journal of Mathematics $\textbf{54 (2)}:$ $621 - 647$, $(2010)$.





\end{thebibliography}

\end{document}